\newcommand{\nc}{\newcommand}
\nc{\les}{\lesssim}
\nc{\nit}{\noindent}
\nc{\nn}{\nonumber}
\nc{\D}{\partial}
\nc{\diff}[2]{\frac{d #1}{d #2}}
\nc{\diffn}[3]{\frac{d^{#3} #1}{d {#2}^{#3}}}
\nc{\pdiff}[2]{\frac{\partial #1}{\partial #2}}
\nc{\pdiffn}[3]{\frac{\partial^{#3} #1}{\partial{#2}^{#3}}}
\nc{\abs}[1] {\lvert #1 \rvert}
\nc{\cAc}{{\cal A}_c}
\nc{\cE}{{\cal E}}
\nc{\cF}{{\cal F}}
\nc{\cP}{{\cal P}}
\nc{\cV}{{\cal V}}
\nc{\cQ}{{\cal Q}}
\nc{\cGin}{{\cal G}_{\rm in}}
\nc{\cGout}{{\cal G}_{\rm out}}
\nc{\cO}{{\cal O}}
\nc{\Lav}{{\cal L}_{\rm av}}
\nc{\cL}{{\cal L}}
\nc{\cB}{{\cal B}}
\nc{\cZ}{{\cal Z}}
\nc{\cR}{{\cal R}}
\nc{\cT}{{\cal T}}
\nc{\cY}{{\cal Y}}
\nc{\cX}{{\cal X}}
\nc{\cXT}{{{\cal X}(T)}}
\nc{\cBT}{{{\cal B}(T)}}
\nc{\vD}{{\vec \mathcal{D}}}
\nc{\efield}{\mathcal{E}}
\nc{\vE}{{\vec \efield}}
\nc{\vB}{{\vec \mathcal{B}}}
\nc{\vH}{{\vec \mathcal{H}}}
\nc{\mR}{\mathcal{R}}
\nc{\mG}{\mathcal{G}}
\nc{\ty}{{\tilde y}}
\nc{\tu}{{\tilde u}}
\nc{\tV}{{\tilde V}}
\nc{\Pc}{{\bf P_c}}
\nc{\bx}{{\bf x}}
\nc{\bX}{{\bf X}}
\nc{\bXYZ}{{\bf XYZ}}
\nc{\bY}{{\bf Y}}
\nc{\bF}{{\bf F}}
\nc{\bS}{{\bf S}}
\nc{\dV}{{\delta V}}
\nc{\dE}{{\delta E}}
\nc{\TT}{{\Theta}}
\nc{\dPsi}{{\delta\Psi}}
\nc{\order}{{\cal O}}
\nc{\Rout}{R_{\rm out}}
\nc{\eplus}{e_+}
\nc{\eminus}{e_-}
\nc{\epm}{e_\pm}
\nc{\eps}{\varepsilon}
\nc{\vnabla}{{\vec\nabla}}
\nc{\G}{\Gamma}
\nc{\w}{\omega}
\nc{\mh}{h}
\nc{\mg}{g}
\nc{\vphi}{\varphi}
\nc{\tlambda}{\tilde\lambda}
\nc{\be}{\begin{equation}}
\nc{\ee}{\end{equation}}
\nc{\ba}{\begin{eqnarray}}
\nc{\ea}{\end{eqnarray}}
\nc{\g}{\gamma}
\nc{\ol}{\overline}
\newtheorem{theorem}{Theorem}[section]
\newtheorem{lemma}[theorem]{Lemma}
\newtheorem{prop}[theorem]{Proposition}
\newtheorem{corollary}[theorem]{Corollary}
\newtheorem{defin}[theorem]{Definition}
\newtheorem{rmk}[theorem]{Remark}
\newtheorem{asmp}[theorem]{Assumption}
\nc{\pT}{\partial_T}
\nc{\pz}{\partial_z}
\nc{\pt}{\partial_t}
\nc{\la}{\langle}
\nc{\ra}{\rangle}
\nc{\infint}{\int_{-\infty}^{\infty}}
\nc{\halfwidth}{6.5cm}
\nc{\figwidth}{10cm}
\newcommand{\f}{\frac}
\nc{\nlayers}{L} \nc{\nsectors}{M}
\nc{\indicator}{\mathbf{1}}
\nc{\Rhole}{R_{\rm hole}}
\nc{\Rring}{R_{\rm ring}}
\nc{\neff}{n_{\rm eff}}
\nc{\Frem}{F_{\rm rem}}
\nc{\R}{\mathbb R}
\nc{\C}{\mathbb C}
\nc{\Z}{\mathbb Z}
\nc{\N}{\mathbb N}
\nc{\DD}{\Delta}
\nc{\cD}{\mathcal D}
\nc{\lnorm}{\left\|}
\nc{\rnorm}{\right\|}
\nc{\rnormp}{\right\|_{\ell^{p,\eps}}}
\nc{\rar}{\rightarrow}
\begin{document}

\begin{abstract}

	We investigate dispersive estimates for the two dimensional Dirac  equation with a potential.  In particular, we show that the Dirac evolution satisfies a $t^{-1}$ decay rate as an operator from the Hardy space $H^1$ to $BMO$, the space of functions of bounded mean oscillation.  This estimate, along with the $L^2$ conservation law allows one to deduce a family of Strichartz estimates.  We classify the structure of threshold obstructions as being composed of s-wave resonances, p-wave resonances and eigenfunctions.  We show that, as in the case of the Schr\"odinger evolution, the presence of a threshold s-wave resonance does not destroy the $t^{-1}$ decay rate. As a consequence of our analysis we obtain a limiting absorption principle in the neighborhood of the threshold, and  show that there are only finitely many eigenvalues in the spectral gap.
	
\end{abstract}

\title[The Dirac Equation in Two Dimensions]{\textit{The Dirac equation in two dimensions: Dispersive estimates and classification of threshold obstructions}}

\author[M.~B. Erdo\smash{\u{g}}an, W.~R. Green]{M. Burak Erdo\smash{\u{g}}an and William~R. Green}
\thanks{The first author was partially supported by the NSF grant  DMS-1501041.}
\address{Department of Mathematics \\
University of Illinois \\
Urbana, IL 61801, U.S.A.}
\email{berdogan@math.uiuc.edu}
\address{Department of Mathematics\\
Rose-Hulman Institute of Technology \\
Terre Haute, IN 47803, U.S.A.}
\email{green@rose-hulman.edu}
%\subjclass{35Q41, 42B20}

\maketitle
\section{Introduction}

We consider the linear Dirac equation with a potential:
\begin{align}\label{eqn:Dirac}
	i\partial_t \psi(x,t)=(D_m+V(x))\psi(x,t), \qquad
	\psi(x,0)=\psi_0(x).
\end{align}
Here  the spatial
variable $x\in \mathbb R^n$, and $\psi(x,t) \in \mathbb C^{2^{n-1}}$.  The free Dirac operator
$D_m$ is defined by
\begin{align}\label{eqn:Dmdef}
	D_m=-i\alpha \cdot \nabla +m\beta =-i \sum_{k=1}^{n}\alpha_k \partial_{k}+m\beta
\end{align}
where $m>0$ is a constant, and 
the $n\times n$ Hermitian matrices $\alpha_0:=\beta$ and $\alpha_j$ satisfy
\be \label{eqn:anticomm}
		\alpha_j \alpha_k+\alpha_k\alpha_j =2\delta_{jk}
		\mathbbm 1_{\mathbb C^{2^{n-1}}}, \,\,\,\,\,\,\,
		  j,k \in\{0, 1,2,\dots, n\}.
\ee
For concreteness, in two dimensions 
we use
\begin{align}
	\beta=\left(\begin{array}{cc} 1& 0\\ 0 & -1
	\end{array}\right), \qquad \alpha_1=\left(\begin{array}{cc} 0 & 1\\ 1 & 0
	\end{array}\right), \qquad
	\alpha_2=\left(\begin{array}{cc} 0 & -i\\ i & 0
	\end{array}\right).
\end{align}
%While in three dimensions, the matrices are defined as
%\begin{multline}
%	\alpha_j=\left(\begin{array}{cc} 0 & \sigma_j
%	\\ \sigma_j & 0 \end{array}\right) \qquad
%	\textrm{and} \qquad 
%	\beta=\left(\begin{array}{cc}  \mathbbm 1_{\mathbb C^{2}} & 0
%	\\ 0 & - \mathbbm 1_{\mathbb C^{2}} 
%	\end{array}\right)\\
%	\textrm{where} \quad \sigma_1=
%	\left(\begin{array}{cc} 0 & 1\\ 1 & 0\end{array} \right), \qquad \sigma_2=
%	\left(\begin{array}{cc} 0 & -i\\ i & 0
%	\end{array}\right), \qquad \textrm{and} \qquad	\sigma_3=\left(\begin{array}{cc} 1& 0\\ 0 & -1
%	\end{array}\right)
%\end{multline} 
Dirac arrived
at these equations to describe the evolution of an electron moving at relativistic speeds, thus the Dirac equation is a way to connect the physical theories of quantum mechanics and relativity, see,  e.g., \cite{Thaller}.   The Dirac equation can be derived by applying quantum-mechanical notions of energy $E=i\hbar \partial_t$ and momentum $p=-i\hbar \nabla$ to the relativistic relationship between energy, momentum and mass, $E=\sqrt{c^2p^2+m^2c^4}$.  One arrives at the square root of a Klein-Gordon equation,
$$
	i\hbar \partial_t \psi(x,t)=\sqrt{-c^2\hbar^2 \Delta 
	+m^2c^4}\, \psi(x,t)
$$
Here $\hbar$ is Planck's constant and $c$ is the speed of light.  In our mathematical analysis, we rescale all constants to be one.  Dirac's linearization of the above equation led to the free Dirac equation, a system of coupled hyperbolic equations, \eqref{eqn:Dirac} with $V=0$.  Dirac's linearization allows one to account for the spin of quantum particles, as well as providing a way to incorporate external electro-magnetic fields in a manner compatible with the relativistic theory where the Klein-Gordon model cannot.  Further details can be found in \cite{Thaller}.

The following identity,\footnote{Here and throughout the paper, scalar operators such as  $-\Delta+m^2-\lambda^2$ are understood as $(-\Delta+m^2-\lambda^2)\mathbbm 1_{\mathbb C^{2^{n-1}}}$.}  which follows from   \eqref{eqn:anticomm},
\be  \label{dirac_schro_free}
	(D_m-\lambda \mathbbm 1)(D_m+\lambda \mathbbm 1) =(-i\alpha\cdot \nabla +m\beta -\lambda \mathbbm 1)
	(-i\alpha\cdot \nabla+m\beta+\lambda \mathbbm 1)   =(-\Delta+m^2-\lambda^2) 
\ee
allows us to formally define the free Dirac resolvent
operator $\mathcal R_0(\lambda)=(D_m-\lambda)^{-1}$ in terms of the
free resolvent $R_0(\lambda)=(-\Delta-\lambda)^{-1}$ of  the Schr\"odinger operator for $\lambda$ in the resolvent set:
\begin{align}\label{eqn:resolvdef}
	\mathcal R_0(\lambda)=(D_m+\lambda) R_0(\lambda^2-m^2).
\end{align}

We note that
$$
	\sigma(D_m)=\sigma_{ess}(D_m)=(-\infty,-m]\cup [m,\infty),
$$
and for suitable potential functions $V$, one has
$\sigma_{ess}(H)=\sigma_{ess}(D_m)$ with $H=D_m+V$.
This is satisfied for large classes of potentials, for 
examples if $V(x)\to 0$ as $|x|\to \infty$, see
\cite[Theorem 4.7]{Thaller}, or \cite{MY}. For the class of potentials we consider in this paper,  Georgescu and Mantoiu \cite[Theorem 1.4]{GM}  proved that  there is no singular  continuous spectrum of $H$, also see \cite{Yam}. Furthermore, the set of eigenvalues is a discrete subset of $\R\backslash\{m,-m\}$, and each eigenvalue is of finite multiplicity,  see \cite{GM} and \cite{Co1}.  
It is possible that eigenvalues accumulate at $\pm m$ even for bounded and decaying potentials, see \cite{Thaller}. However, the resolvent expansions we obtain in Section~\ref{sec:m exp} rule that out, see Remark~\ref{rmk:LAP}. In particular there are finitely many eigenvalues in the spectral gap $(-m,m)$; also see  \cite{Mur}, \cite{kopy}, \cite{Kurb}, and \cite{Co}. 
In three dimensions, for suitably decaying potentials, it is known that  there are no embedded eigenvalues in the essential spectrum, except
possibly at the thresholds $\lambda=\pm m$, \cite{Thomp}; also see also \cite{R,BG1,Vog,GM}. Although the method in \cite{Thomp} appears to be applicable in general dimensions and for more general operators, this has not been pursued anywhere. The method of Roze in \cite{R} is based on squaring the equation and using analogous results for Schrodinger type operators. This is applicable in two dimensions, however it only applies for potentials of the form $q(x)I$.

For our analysis, we make the following assumptions on the potential $V$ and the spectrum of $H=D_m+V$.

\begin{asmp}\label{assumption}
i) The matrix-valued potential function $V(x)$ is self-adjoint and its entries satisfy the decay estimate $|V_{ij}(x)|\les \la x\ra^{-\gamma}$ for some $\gamma>3$. \\
ii) There are no eigenvalues in $(-\infty,-m)\cup (m,\infty)$.\\
iii) A limiting absorption principle  for the perturbed resolvent operator of the form:
\be\label{eqn:lap}
			\sup_{|\lambda|>\lambda_0}\|\partial_\lambda^k \mR_V^\pm (\lambda)\|_{L^{2,\sigma}\to L^{2,-\sigma}} \les 1,
			\qquad \sigma > \f12 +k, \,\,\,\,k=0,1, 
\ee  
		holds for any $\lambda_0>m$. %\label{asmp:lap}
\end{asmp}
We note that the second and third assumptions are used only in the high energy analysis of the operator.  The third assumption above  requires some discussion.   Note that in contrast with the 
Schr\"odinger resolvent, the resolvent for the free Dirac operator does not decay as $\lambda\to \infty$.  Therefore, Agmon's bootstrapping  argument  \cite{agmon} produces uniform bounds in $\lambda$ only for compact subsets of the purely absolutely continuous spectrum, see e.g. \cite{Yam}, \cite{GM}. In particular, under the first two assumptions, the results of \cite{GM} imply that
$$
\sup_{\lambda_1>|\lambda|>\lambda_0}\|\partial_\lambda^k \mR_V^\pm (\lambda)\|_{L^{2,\sigma}\to L^{2,-\sigma}} \les 1, \qquad \sigma > \f12 +k, \,\,\,\,k=0,1, 
$$
for any $\lambda_1> \lambda_0>m$, with a bound depending on both $\lambda_1$ and $\lambda_0$. This situation is quite similar to the case of magnetic Schr\"odinger equation, and it is likely that one can obtain \eqref{eqn:lap} under the first two assumptions   using the method in \cite{EGS}. This will be pursued elsewhere. 
 
The limiting absorption principle  is much  better studied in the three dimensional case, see for example \cite{Bouss1,DF,BG}.  The results of D'Ancona and Fanelli, \cite{DF},  requires the potential to be small.  Boussaid and Gol\'enia, \cite{BG}, established a limiting absorption principle near the thresholds $\lambda=\pm m$. Finally, Georgescu and Mantoiu \cite{GM} obtained a limiting absorption principle in general dimensions but it is only uniform on compact subsets of the purely absolutely continuous spectrum. 

As our   time decay analysis requires only the decay assumption but  no smoothness  or smallness in any particular norm, we chose to leave this as an overarching assumption.  In particular, we need only use this assumption in our high energy analysis in Section~\ref{sec:high}.  A viable limiting absorption principle of this form may be attained for a non-trivial class of potentials, following the work of Boussaid \cite{Bouss1} in three dimensions provided the potential is $C^\infty$ and satisfies the decay estimates $|\partial_x^k V(x)|\les \la x \ra^{-5-k-}$. This approach makes use of the minimal escape velocity estimates of Hunziker, Sigal and Soffer, \cite{HSS}, to establish  time decay on polynomially weighted $L^2$ spaces.  Then, one can use iterated resolvent identities to establish \eqref{eqn:lap}, see \cite{kopy}.

To state our main result we introduce some notation. Throughout the paper   $a-:=a-\epsilon$ for an arbitrarily small, but fixed, $\epsilon>0$.  Similarly, $a+:=a+\epsilon$. Let  $P_{ac}$ be the spectral projection onto the absolutely continuous spectrum. For the definition of the threshold regularity and resonances see Definition~\ref{resondef} below.  
\begin{theorem}\label{thm:main}
 Suppose Assumption~\ref{assumption} holds, and the threshold energies, $\pm m$, are regular or there are resonances of the first kind at $\pm m$.  
	Then, for any $0\leq \alpha \leq 1$, we have the dispersive bounds for the Dirac evolution\footnote{Thoroughout the paper $\|\cdot\|_{BMO} $ denotes $\|\cdot\|_{BMO\times BMO(\R^2)}$. We similarly use the notation $\|\cdot\|_{H^1}$ and $\|\cdot\|_{L^p}$. See \cite{FS} for the definition of the spaces $H^1$ and $BMO$.} 
	\begin{align}\label{eqn:mainthm}
		\|e^{-itH} P_{ac}(H)\langle H\rangle^{-2-\frac{3}{2}\alpha -} f\|_{BMO } \les \frac{1}{\la t\ra^{\alpha}}
		\|f\|_{H^1 }.
	\end{align}	
 \end{theorem}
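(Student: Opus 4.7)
The plan is to use Stone's formula to represent the evolution as a spectral integral
\[
e^{-itH}P_{ac}(H)f = \frac{1}{2\pi i}\int_{|\lambda|>m} e^{-it\lambda}\bigl[\mR_V^+(\lambda)-\mR_V^-(\lambda)\bigr]f\,d\lambda,
\]
and then split into a high-energy piece $|\lambda|>\lambda_0$ and a low-energy piece $m\le|\lambda|\le\lambda_0$ using a smooth cutoff. Interpolating the desired $\langle t\rangle^{-1}:H^1\to BMO$ bound with the trivial $L^2\to L^2$ conservation law, via complex interpolation on the analytic family $e^{-itH}P_{ac}(H)\langle H\rangle^{-z}$, will yield the full family \eqref{eqn:mainthm} parametrized by $\alpha\in[0,1]$; so the core task is to establish the $\alpha=1$ endpoint.

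For the high-energy part I would exploit Assumption~\ref{assumption}(iii): one integration by parts in $\lambda$ gains a factor of $t^{-1}$ at the cost of a $\partial_\lambda\mR_V^\pm$, which is controlled in $L^{2,\sigma}\to L^{2,-\sigma}$ by the LAP. Since the free Dirac resolvent does not decay as $\lambda\to\infty$, the powers of $\langle H\rangle^{-1}$ absorb the polynomial growth in $\lambda$ produced by the $(D_m+\lambda)$ factor in \eqref{eqn:resolvdef} and by the weights needed to pass from $L^{2,\sigma}$ to $H^1/BMO$ via Hardy-space atomic decomposition. Here the strategy is to reduce via the symmetric resolvent identity $\mR_V = \mR_0-\mR_0V\mR_V$ to an oscillatory integral whose kernel is controlled by the standard $2$D stationary-phase analysis of the free Dirac propagator (whose spectral density involves Bessel functions of the Klein--Gordon dispersion $\sqrt{\lambda^2-m^2}$). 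The resulting kernel is of Fourier-integral type, and a Hardy-space atom test gives the $H^1\to BMO$ bound at rate $\langle t\rangle^{-1}$.

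For the low-energy part I would use the resolvent expansions around the thresholds $\pm m$ developed in Section~\ref{sec:m exp}. Under the standing hypothesis that $\pm m$ are either regular or support only a resonance of the first kind, the symmetric resolvent identity $\mR_V^\pm = (U+vM^\pm(\lambda)^{-1}v)$, with $V=vUv$ in the Jensen--Nenciu framework, produces an expansion of $\mR_V^\pm(\lambda)$ in which the singular pieces (powers of $\log(\lambda\mp m)$ coming from the $2$D free resolvent) either cancel between $\mR_V^+$ and $\mR_V^-$ in the Stone formula, or contribute oscillatory integrals that, after one integration by parts, still give $t^{-1}$ decay from $H^1$ to $BMO$. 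The regular threshold case mirrors the $2$D Schr\"odinger analysis and is where the $\langle t\rangle^{-1}$ rate appears naturally; the first-kind-resonance case is where one must verify, as in the Schr\"odinger setting, that the extra leading singular term is annihilated in the right way so that no logarithmic loss enters.

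The main obstacle is the low-energy s-wave resonance case: one must show that the leading-order contribution from the resonant projection does \emph{not} slow the decay from $\langle t\rangle^{-1}$ to $\langle t\rangle^{-1}/\log t$, despite the $\log\lambda$ singularities of the $2$D resolvent. This requires identifying an exact algebraic cancellation at the threshold (analogous to the fact that, for $2$D Schr\"odinger, the first-kind resonance sits in the kernel of the operator that multiplies the most singular coefficient), and then combining it with enough regularity of the remainder in $\lambda$ to perform the integration by parts in Stone's formula without losing the decay. Once this cancellation is isolated in the $M^\pm(\lambda)^{-1}$ expansion, the rest of the argument is a careful but routine oscillatory-integral estimate tested against Hardy-space atoms.
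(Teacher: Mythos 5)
Your overall plan---Stone's formula, a low-/high-energy split, Jensen--Nenciu symmetric resolvent expansion near $\pm m$ with an orthogonality cancellation handling the s-wave resonance, and atomic $H^1\to BMO$ estimates---matches the paper's structure, and your identification of the crucial cancellation (that the resonant projection $S_1$ sits inside $Q$, so that $S_1 v M_{11}=0$ kills the leading logarithmic coefficient) is exactly right.

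However, there is a genuine gap in how you obtain the $\alpha$-family \eqref{eqn:mainthm}. You propose to interpolate the $\langle t\rangle^{-1}:H^1\to BMO$ endpoint with $L^2\to L^2$ conservation to produce \eqref{eqn:mainthm}. That interpolation would change the function spaces (it produces $L^p\to L^{p'}$ bounds with decay $|t|^{1-2/p}$, which is precisely the step the paper uses \emph{afterward} to reach Strichartz estimates), whereas \eqref{eqn:mainthm} is stated entirely as an $H^1\to BMO$ bound for every $\alpha$. In the paper the $\alpha$-dependence comes from the frequency-localized high-energy estimate of Theorem~\ref{thm:high energy}, namely $\|e^{-itH}P_{ac}\chi_j(H)\|_{H^1\to BMO}\lesssim \min\bigl(2^{2j},2^{7j/2}|t|^{-1}\bigr)$: one writes $\min(a,b)\le a^{1-\alpha}b^{\alpha}$ to get $2^{(2+\frac32\alpha)j}|t|^{-\alpha}$, and the weight $\langle H\rangle^{-2-\frac32\alpha-}$ is exactly what makes the dyadic sum over $j$ converge. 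This is an elementary logarithmic-convexity argument internal to the $H^1\to BMO$ estimates, not a complex-interpolation step against $L^2$. Without this you cannot explain where the exponent $2+\tfrac32\alpha$ comes from. A smaller issue: your high-energy reduction via a single resolvent iteration $\mR_V=\mR_0-\mR_0 V\mR_V$ is insufficient to make the LAP usable; the paper iterates twice, $\mR_V=\mR_0-\mR_0 V\mR_0+\mR_0 V\mR_V V\mR_0$, so that the full resolvent $\mR_V$ is flanked by two free resolvents whose kernels supply the oscillatory structure needed for stationary phase, and so that the weights required by the LAP are absorbed by the potential on both sides.
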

The free Dirac evolution has   threshold s-wave resonances, thus Theorem~\ref{thm:main} holds under the natural conditions on the edge of the spectrum.

As usual, we study the dispersive bounds on the evolution by expressing  $e^{-itH}P_{ac}(H)$   via the Stone's formula:
\begin{align}\label{Stone}
	e^{-it H}P_{ac}(H)f(x)=\frac{1}{2\pi} \int_{\sigma_{ess}(H)} e^{-it\lambda} [\mR_V^+-\mR_V^-](\lambda) f(x)\,d\lambda.
\end{align}

Due to the differing behavior of the resolvents
$\mR_V^\pm(\lambda)$ in a neighborhood of the threshold and away from the threshold on the positive half of the spectrum, our analysis proceeds in two cases.  We first consider the low-energy contribution, when $\lambda$ is in a sufficiently small neighborhood of the threshold
$\lambda=m$.  A similar analysis can be done on the negative portion of the spectrum.

\begin{theorem}\label{thm:mainlow energy}

	Under Assumption~\ref{assumption} part i), with $\chi$ a smooth cut-off to a sufficiently small neighborhood of the threshold energy
	$\lambda=m$. If $\lambda=m$ is regular or if there is 
	a resonance of the first kind, then the following dispersive bound holds.
	\begin{align*}
		\| e^{-itH}P_{ac}(H)\chi(H) f\|_{BMO }\les
		\frac{1}{\la t\ra} \|f\|_{H^1 }.
	\end{align*}  
\end{theorem}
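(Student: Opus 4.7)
The plan is to apply Stone's formula \eqref{Stone} restricted by the cutoff $\chi$ to a small neighborhood of $\lambda = m$, then factor $V = v^{*} U v$ with $v = |V|^{1/2}$ and $U$ a unitary sign operator, and use the symmetric resolvent identity
\[
\mR_V^\pm(\lambda) = \mR_0^\pm(\lambda) - \mR_0^\pm(\lambda)\, v\, M^\pm(\lambda)^{-1} v\, \mR_0^\pm(\lambda),
\qquad M^\pm(\lambda) = U + v\, \mR_0^\pm(\lambda)\, v.
\]
The resulting time-evolution splits into a free piece and a perturbed piece.  The free piece, $\int e^{-it\lambda}\chi(\lambda)[\mR_0^+ - \mR_0^-](\lambda)\, d\lambda$, reduces via \eqref{eqn:resolvdef} to the two-dimensional free Schr\"odinger propagator multiplied by the first-order differential operator $D_m + \lambda$, for which the $H^1 \to BMO$ dispersive bound with rate $\la t\ra^{-1}$ is standard.

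For the perturbed piece the next step is a low-energy Laurent-type expansion of $M^\pm(\lambda)^{-1}$ as $\lambda \to m$.  From \eqref{eqn:resolvdef}, the Dirac resolvent inherits the logarithmic expansion of the free two-dimensional Schr\"odinger resolvent, so that $M^\pm(\lambda) = a^\pm(\lambda)\, T_{-1} + T_0 + O((\lambda-m)\log(\lambda-m))$, with $a^\pm$ a scalar logarithmic symbol and $T_{-1}$ a fixed rank-one operator associated with the leading term of $R_0(\lambda^2 - m^2)$.  Inverting under the hypothesis that $\lambda = m$ is regular or supports only an s-wave resonance of the first kind, one obtains an expansion whose most singular term is either bounded in $\lambda$ or of the form $[g(\lambda)]^{-1} S$, where $g$ is a scalar logarithmic symbol and $S$ is a finite-rank projection onto the resonance subspace. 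This is exactly the content of the ``m exp'' section of the paper, which I would invoke.

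Substituting the expansion back into Stone's formula, the low-energy contribution becomes a finite sum of oscillatory integrals of the form
\[
\int_\R e^{-it\lambda}\chi(\lambda)\, \Gamma^\pm(\lambda; x, y)\, d\lambda,
\]
where each kernel $\Gamma^\pm$ is built from pairs of free Dirac resolvents sandwiching either a smooth potential-weighted operator or a scalar logarithmic coefficient times a finite-rank piece.  A single integration by parts in $\lambda$ yields the $\la t\ra^{-1}$ decay, provided $\D_\lambda \Gamma^\pm$ is integrable against $\chi$; the derivative of a logarithmic symbol has an integrable singularity, so this succeeds term by term.  The role of the $H^1 \to BMO$ pairing is crucial here: the kernels carry at worst $\log |x - y|$ spatial singularities inherited from the two-dimensional Schr\"odinger resolvent, and these singularities are absorbed by $BMO$ on the target side and by the atomic decomposition of $H^1$ on the source side, so one avoids the logarithmic loss that would spoil an $L^1 \to L^\infty$ argument.

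The main obstacle will be the s-wave resonance case, where $M^\pm(\lambda)^{-1}$ carries a $[\log(\lambda-m)]^{-1}$ factor multiplying a finite-rank projection onto resonance functions that need not lie in $L^2$.  To obtain the full $\la t\ra^{-1}$ decay one must (i) exploit the cancellation between the $+$ and $-$ boundary values in the difference $\mR_V^+ - \mR_V^-$ so that the singular scalar $[\log(\lambda-m)]^{-1}$ becomes differentiable on the real axis with an integrable derivative, and (ii) verify that the spatial kernel of the leading resonance contribution defines a bounded operator $H^1 \to BMO$ uniformly in $\lambda$.  Step (ii) requires showing that the s-wave resonance functions, though not $L^2$, lie in a BMO-compatible class, which should follow from the explicit form of the leading term of the free resolvent together with the decay assumption $|V_{ij}(x)| \lesssim \la x\ra^{-\gamma}$ with $\gamma > 3$.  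Carrying this out cleanly, and in particular tracking the $\pm$ cancellation through all terms of the expansion, is the technical heart of the low-energy argument.
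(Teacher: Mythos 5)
Your high-level outline — Stone's formula, symmetric resolvent identity, low-energy expansion of $M^\pm(z)^{-1}$ in the regular and s-wave resonance cases, and term-by-term oscillatory-integral bounds — matches the paper's strategy. But there are two genuine gaps, and the first is central.

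\textbf{You misidentify why $H^1\to BMO$ is needed.} You write that the kernels ``carry at worst $\log|x-y|$ spatial singularities inherited from the two-dimensional Schr\"odinger resolvent'' and that these are absorbed by $BMO$. In fact the log singularity is not the new phenomenon: it is already present in the 2D Schr\"odinger problem, where Schlag and Erdo\smash{\u{g}}an--Green prove $L^1\to L^\infty$ bounds in spite of it. What forces the $H^1/BMO$ framework here is the gradient in the Dirac resolvent: since $\mR_0^\pm(\lambda)=(D_m+\lambda)R_0^\pm(z^2)$, the term $-i\alpha\cdot\nabla$ applied to the leading logarithm of $R_0$ produces a kernel proportional to $\alpha\cdot(x-y)/|x-y|^2$ (the operator $R_1$ in \eqref{eqn:R1R2R3}). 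This is a Riesz-transform-type singularity $|x-y|^{-1}$, not a logarithm, and $R_1$ is \emph{not} bounded $L^1\to L^2$ or $L^2\to L^\infty$. The rescue is the Ding--Lu theorem that such homogeneous kernels map $H^1\to L^2$, which is the content of Lemma~\ref{lem:R1h1j}. If one insists on $L^1\to L^\infty$ the paper notes one can iterate the Born series to smooth out $|x-y|^{-1}$, but this degrades the decay to $|t|^{-1}(\log t)^k$. Your plan as written never isolates $R_1$ and never invokes the $H^1\to L^2$ mapping property, so the key estimate is missing.

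\textbf{Integration by parts alone is not enough, and you omit the orthogonality cancellations.} The outer resolvents split into a low piece $\mR_L^\pm$ (where $z|x-y|<1$) and a high piece $\mR_H^\pm\sim e^{\pm iz|x-y|}\widetilde\omega_\pm(z|x-y|)$. For terms involving $\mR_H$ or $R_4^\pm$ the phase $e^{-it\sqrt{z^2+m^2}\pm iz|x-y|}$ has a genuine stationary point near $z_0\approx r/t$, and a bare integration by parts in $\lambda$ does not give $\la t\ra^{-1}$ uniformly in $|x-y|$; the paper uses the stationary-phase Lemma~\ref{lem:high stat phase}. Separately, in the s-wave resonance term $h_\pm(z)S_1D_1S_1$ the logarithmic growth of $h_\pm$ would destroy the estimate if one only used $\pm$-cancellation; the paper instead relies on the orthogonality $S_1 v M_{11}=0$ (and $QvM_{11}=0$) to subtract off $\log(z\langle y\rangle)$ from $\log(z|y-y_1|)$, producing the bounded function $F$ in Proposition~\ref{prop:swave}. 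Your item (i), ``exploit the cancellation between the $+$ and $-$ boundary values,'' is a different mechanism and, on its own, does not control the $\log z$ from $h_\pm(z)$ multiplying the outer low-frequency logarithms; the subtraction trick through $Q$ and $S_1$ is essential and absent from your sketch.
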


When $\lambda$ is away from the threshold, we consider the evolution concentrated on dyadic frequencies to prove the following.

\begin{theorem}\label{thm:high energy}
	Under Assumption~\ref{assumption},
	with $\chi_j$ a smooth cut-off to $\lambda \approx 2^j$, $j\geq 0$, 
	we have the bound
	\begin{align*}
		\| e^{-itH}P_{ac}(H)\chi_j(H)f\|_{BMO }\les \min(2^{2j}, 2^{\f72 j}|t|^{-1})
		\|f\|_{H^1 }.
	\end{align*}

\end{theorem}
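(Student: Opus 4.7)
The plan is to localize Stone's formula~\eqref{Stone} by $\chi_j(\lambda)$ and then open up $\mR_V^\pm(\lambda)$ via a symmetric Born series. Iterating the resolvent identity $\mR_V^\pm = \mR_0^\pm - \mR_V^\pm V \mR_0^\pm = \mR_0^\pm - \mR_0^\pm V \mR_V^\pm$ for a sufficiently large integer $N$ yields
\be
\mR_V^\pm(\lambda) = \sum_{k=0}^{2N-1}(-1)^k \mR_0^\pm(V\mR_0^\pm)^k + (\mR_0^\pm V)^N \mR_V^\pm (V\mR_0^\pm)^N,
\ee
which splits the contribution into finitely many fully explicit ``free'' terms and one ``perturbed tail'' in which $\mR_V^\pm$ sits sandwiched between $N$ copies of $V$ on each side.

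For the free terms, I would use the explicit representation $\mR_0^\pm(\lambda) = (D_m+\lambda)R_0^\pm(\lambda^2-m^2)$, where in two dimensions the scalar Schr\"odinger resolvent has a kernel given by a Hankel function of order zero. The frequency-localized oscillatory integral $\int e^{-it\lambda}\chi_j(\lambda)\bigl[\mR_0^+(V\mR_0^+)^k - \mR_0^-(V\mR_0^-)^k\bigr]\, d\lambda$ is bounded at the kernel level by $2^{2j}$ after placing absolute values, and alternatively by $2^{7j/2}|t|^{-1}$ after one integration by parts in $\lambda$. The factor $2^{7j/2}$ is produced by the $D_m+\lambda$ prefactor of $\mR_0^\pm$, the Hankel asymptotics $H_0^{(1)}(r)\sim \sqrt{2/(\pi r)}\, e^{i(r-\pi/4)}$ for large argument, the length $\sim 2^j$ of the frequency window $\{\lambda\approx 2^j\}$, and the effect of differentiating $\mR_0^\pm$ once in $\lambda$ during the integration by parts.

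For the perturbed tail, the two outermost factors of $V$ supply spatial weights $\langle x\rangle^{-\gamma}$ with $\gamma>3$, which can be split as $\langle x\rangle^{-\sigma}\cdot\langle x\rangle^{-(\gamma-\sigma)}$ with $\sigma>3/2$. The first factor is absorbed by the weighted $L^{2,\sigma}$ spaces on which Assumption~\ref{assumption}~(iii) provides uniform bounds on $\mR_V^\pm(\lambda)$ and $\partial_\lambda \mR_V^\pm(\lambda)$, while the second, together with the outermost free resolvents, controls the $H^1\to L^{2,\sigma}$ and $L^{2,-\sigma}\to BMO$ mapping properties. One integration by parts in $\lambda$ then yields the $|t|^{-1}$ factor via \eqref{eqn:lap}, and the $\lambda$-growth is controlled through the $2^j$ factors arising from $\mR_0^\pm$, $\partial_\lambda \mR_0^\pm$, and the length of the frequency window, giving the same $\min(2^{2j},2^{7j/2}|t|^{-1})$ bound.

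The main technical obstacle is that, in contrast to the Schr\"odinger case, the free Dirac resolvent $\mR_0^\pm(\lambda)$ does not decay as $\lambda\to\infty$; it in fact grows. Consequently, each additional free resolvent in the Born expansion potentially worsens the $\lambda$-dependence. To keep the growth at exactly $2^{7j/2}$, the oscillatory integrals must be analyzed at the level of the kernel using the Hankel asymptotics, rather than by composing operator-norm inequalities, and the integration by parts must be performed exactly once on the outermost phase $e^{-it\lambda}$. Summing the bounds for all Born terms and taking the minimum of the two alternatives yields the claimed estimate.
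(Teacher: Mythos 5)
Your outline captures the coarse structure (Stone's formula, Born expansion into free terms plus a tail controlled by the limiting absorption principle, kernel-level analysis of the free Dirac resolvent via its Hankel asymptotics), but it misses the two ingredients that actually make the paper's argument close, and the strategy as written has a gap that would prevent it from producing the stated $2^{7j/2}|t|^{-1}$ bound.

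First, the claim that ``one integration by parts on the outermost phase $e^{-it\lambda}$'' yields the $|t|^{-1}$ factor is not correct for any term in which a free Dirac resolvent is evaluated in the regime $z|x_i-x_{i+1}|\gtrsim 1$. There the kernel carries the oscillatory factor $e^{\pm iz|x_i-x_{i+1}|}$, and differentiating it in $\lambda$ (equivalently in $z$) produces a factor $|x_i-x_{i+1}|$ that is not bounded; the $1/t$ gained by the integration by parts is exactly cancelled when the combined phase $\phi_\pm(z)=\sqrt{z^2+m^2}\mp zr/t$ has its stationary point inside the support of $\chi_j$, which forces $r\approx 2^j t$. This is why the paper does \emph{not} integrate by parts in $e^{-it\lambda}$ alone: it uses the stationary-phase estimate of Lemma~\ref{stat phase} in the scaled form of Lemma~\ref{lem:high stat phase2} (and for the tail, the same lemma after freezing the inner spatial variables and rescaling $t\mapsto 2^{-3j}t$). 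Plain integration by parts in $e^{-it\lambda}$ is used only for the non-oscillatory pieces $\mR_L$, where it is harmless.

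Second, the paper deliberately stops the Born expansion after a single iteration, $\mR_V=\mR_0-\mR_0 V\mR_0+\mR_0 V\mR_V V\mR_0$, precisely because each additional free Dirac resolvent in two dimensions grows like $z^{1/2}$ in the oscillatory regime (from the $(D_m+\lambda)$ prefactor against the $(zr)^{-1/2}$ Hankel decay). A ``sufficiently large'' number $2N$ of Born iterations, as you propose, would therefore produce $\lambda$-growth of order $2^{j(N+\text{const})}$ in the tail, strictly worse than $2^{7j/2}$ once $N\geq 3$; summing the terms and ``taking the minimum'' cannot recover the stated exponent, it can only degrade it. The device that allows the paper to keep $N=1$ — which you do not mention — is the observation from \eqref{eq: R_1} that the singular part $R_1=\frac{i}{2\pi}\,\frac{\alpha\cdot(x-y)}{|x-y|^2}$ of the outermost free resolvent is a homogeneous kernel of critical degree mapping $H^1\to L^2$ by Ding--Lu (\cite{dinglu}), so the two outermost resolvents already land one in $L^2$, and a single application of the limiting absorption principle~\eqref{eqn:lap} closes the tail. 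This is the real reason the result is formulated in $H^1\to BMO$ rather than $L^1\to L^\infty$: not to tame the $\lambda$-growth, but to avoid further Born iteration altogether. Without both the stationary-phase treatment of the oscillatory kernels and the $H^1\to L^2$ bound on $R_1$, your proposal does not reach the claimed estimate.
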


Therefore, we obtain 
$$
	\| e^{-itH}\la H\ra^{-\frac72-} P_{ac}(H) f\|_{BMO }\les |t|^{-1} 
		\|f\|_{H^1 }.
$$
Interpolating this bound with the $L^2$ conservation (see, e.g.  \cite{BS,Hanks}) one obtains the $L^p\to L^{p^\prime}$ bound:
$$
	\| e^{-itH}\la H\ra^{ \frac72-\frac7p-}  P_{ac}(H) f\|_{L^{p^\prime}}\les |t|^{1-\frac2p} \|f\|_{L^p},\,\,\,\,1 <  p \leq 2.  
$$
As in the seminal work of Ginibre and Velo \cite{GV}, this yields the following Strichartz estimates:
\begin{corollary}
 Under the conditions of  Theorem~\ref{thm:main}, we have   
$$
 \| e^{-itH}\la H\ra^{ \frac7{2r}-\frac74-} P_{ac}(H) f\|_{L^{q}_tL^r_x}\les   \|f\|_{L^2},\,\,\,\,\frac1q+\frac1r=\frac12, \,\,\,\,\,2\leq r<\infty.
$$
\end{corollary}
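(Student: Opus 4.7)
The plan is to derive the Strichartz estimates from the $L^p \to L^{p'}$ dispersive bound
\[
\|e^{-itH}\la H\ra^{\frac72-\frac7p-}P_{ac}(H)f\|_{L^{p'}}\lesssim |t|^{1-\frac{2}{p}}\|f\|_{L^p},\qquad 1<p\le 2,
\]
displayed just before the corollary, via the standard $TT^*$ argument of Ginibre and Velo \cite{GV} combined with the one-dimensional Hardy--Littlewood--Sobolev inequality. Because $H$ is self-adjoint, the operators $P_{ac}(H)$, $\la H\ra^{-s}$ and $e^{-itH}$ are defined by the spectral calculus, are mutually commuting, and are bounded (unitary, for $e^{-itH}$), so the functional-calculus factors may be moved through the argument freely.

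Fix $r\in(2,\infty)$ and let $q$ satisfy $\frac1q+\frac1r=\frac12$, and set $s=\frac74-\frac{7}{2r}-$. Define $T : L^2_x \to L^q_tL^r_x$ by $Tf(t)=e^{-itH}\la H\ra^{-s}P_{ac}(H)f$; the target estimate is exactly $\|T\|_{L^2_x\to L^q_tL^r_x}<\infty$. By $TT^*$ duality this reduces to the boundedness of
\[
TT^*F(t)=\int e^{-i(t-\tau)H}\la H\ra^{-2s}P_{ac}(H)F(\tau)\,d\tau
\]
from $L^{q'}_tL^{r'}_x$ into $L^q_tL^r_x$. Applying the dispersive bound above at $p=r'\in(1,2)$, so that $2s=\frac72-\frac7r-$ matches the required weight exponent, produces the pointwise-in-time inequality
\[
\|TT^*F(t)\|_{L^r_x}\lesssim \int|t-\tau|^{-(1-2/r)}\|F(\tau)\|_{L^{r'}_x}\,d\tau.
\]

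For $r\in(2,\infty)$ the exponent $1-\frac2r$ lies in $(0,1)$, so the right-hand side is a classical one-dimensional Riesz potential acting on $\|F(\cdot)\|_{L^{r'}_x}$. The Hardy--Littlewood--Sobolev inequality yields the $L^{q'}_t \to L^q_t$ bound precisely when $\frac1{q'}-\frac1q = 1-(1-\frac2r)=\frac2r$, which is exactly the admissibility relation $\frac1q+\frac1r=\frac12$. The remaining endpoint $r=2$, $q=\infty$ in the statement is an immediate consequence of the $L^2$ conservation law and needs no weight. Since the HLS exponents are in the open range, no step of the argument presents a genuine obstacle; the only minor point to monitor is the $\epsilon$ loss in the weight, which is harmless because one may always enlarge $s$ slightly beyond $\frac74-\frac{7}{2r}$ so that $2s$ exceeds the endpoint $\frac72-\frac7r$ by the required small amount on the dispersive side.
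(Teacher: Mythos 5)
Your proposal takes the same route the paper indicates: the $TT^*$ reduction of Ginibre--Velo combined with one-dimensional Hardy--Littlewood--Sobolev applied to the kernel $|t-\tau|^{-(1-2/r)}$, with the $r=2$ endpoint handled by $L^2$ conservation. The admissibility bookkeeping ($\frac1{q'}-\frac1q=\frac2r \Leftrightarrow \frac1q+\frac1r=\frac12$) and the range $r\in(2,\infty)$ for HLS are correctly checked. The only thing to flag is the sign on the $\epsilon$ in your definition of $s$: you set $s=\frac74-\frac{7}{2r}-$, but then $-2s=\frac7r-\frac72+$ whereas the dispersive estimate at $p=r'$ supplies $\la H\ra^{\frac72-\frac7{r'}-}=\la H\ra^{\frac7r-\frac72-}$, so the factor $\la H\ra^{-2s-(\frac7r-\frac72-)}$ you would need to discard has a \emph{positive} exponent and is not bounded. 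As you note in your final sentence, this is repaired by taking $s=\frac74-\frac{7}{2r}+$ so that $-s=\frac{7}{2r}-\frac74-$, which is exactly the exponent in the statement of the corollary and makes $-2s$ at least as smoothing as $\frac7r-\frac72-$. With that one correction the argument is complete and matches the paper's intent.
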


Since the time-decay we obtain is the same as that for the Schr\"odinger equation in two dimensions, the range of admissible exponents for the Dirac evolution mirrors that of Schr\"odinger evolution.   

The mathematical analysis of Dirac operators is less well-studied than the related Schr\"odinger, wave and Klein-Gordon equations.  The paper \cite{DF} by   D'Ancona and Fanelli  seems to be the first  to analyze the time-decay for a perturbed Dirac evolution in a pointwise sense.  This analysis in the three-dimensional case considered the massless Dirac equation ($m=0$) and related wave equations with small electromagnetic potentials.  Dispersive and Strichartz estimates for the free Dirac equation in three dimensions\footnote{During the review period for this article, the authors and Toprak studied the analagous dispersive estimates for three dimensional Dirac equations with threshold obstructions, \cite{EGT}.} were proven by Escobedo and Vega, \cite{EV}, to analyze a semi-linear Dirac equation.  Boussaid, \cite{Bouss1} proved dispersive estimates in Besov spaces and weighted $L^2$ spaces for the three-dimensional Dirac equation with mass $m>0$, with an aim towards studying the stable manifold around `particle-like solutions' for a class of non-linear Dirac equations.   

Further study of the Dirac operator in the sense of smoothing and Strichartz estimates has been performed by a variety of authors, see for example \cite{BDF,C,CS}.  In the two-dimensional case,   the evolution on weighted $L^2$ spaces was studied in \cite{kopy}, which had roots in the work of Murata, \cite{Mur}.   Frequency-localized endpoint Strichartz estimates for the free Dirac equation are obtained in two spatial dimensions in \cite{BH}, which are used to study the cubic non-linear Dirac equation. Dispersive estimates for one-dimensional Dirac equation was considered in \cite{CTS}. 

Our approach relies on a detailed analysis of the Dirac resolvent operators.  We follow the strategy employed to analyze the two-dimensional Schr\"odinger equation set out in \cite{Sc2} and in our earlier work \cite{eg2,eg3,eg4}, also see \cite{ebru}.  We note that extending these results is  non-trivial even for the wave equation, see \cite{Gwave,beceanu}.  In contrast to the Schr\"odinger and wave equation, we present our estimates from $H^1$ to $BMO$ instead of as operators from $L^1\to L^\infty$.  The use of such spaces are necessitated by technical issues which we discuss in Section~\ref{sec:low}, however they still serve as interpolation spaces in the same way as $L^1$ and $L^\infty$. Dispersive estimates in the setting of $H^1$ and $BMO$ spaces were established in \cite{DF1,beceanu}.

In addition to proving time decay estimates for the Dirac evolution, we provide a full classification of the obstructions that can occur at the threshold of the essential spectrum at $\lambda =\pm m$.  In two dimensions, there is a rich structure of resonances and eigenfunctions that can occur, which we classify.  This classification is inspired by the previous work on Schr\"odinger operators   \cite{JN,BDG,eg2}. For the classification of threshold obstructions in three dimensions and their effect on the time decay of the Dirac evolution see \cite{EGT}.

The paper is organized as follows.  We first develop expansions for the free Dirac resolvent around the threshold energy $\lambda=m$ in Section~\ref{sec:exps}.  These bounds allow us to prove the natural time decay bounds for the free Dirac evolution as an operator between Besov spaces  in Section~\ref{sec:free} as well as to establish expansions for the perturbed resolvent near the threshold in Section~\ref{sec:m exp}.  These expansions then allow us to prove bounds on the low-energy portion of the evolution in Section~\ref{sec:low}. We prove bounds on the high-energy portion of the evolution in Section~\ref{sec:high}.  Finally, we  classify the threshold resonances and eigenfunctions in Section~\ref{sec:resonanceclass}.

\section{Free resolvent expansions around the threshold energy $m$}\label{sec:exps}
    
In this section we study the behavior of the free  Dirac resolvent by using the properties of   free Schr\"odinger resolvent operator $R_0(z)=(-\Delta-z)^{-1}$.  Formally, the free resolvent $$R_0^\pm(z^2)=\lim_{\epsilon\to 0^+}(-\Delta -(z^2\pm i\epsilon))^{-1}$$ and the perturbed resolvent operators $$R_V^\pm(z^2)=\lim_{\epsilon\to 0^+}(-\Delta+V -(z^2\pm i\epsilon))^{-1}$$
are well-defined as an operator between weighted $L^2(\R^2)$ spaces, see \cite{agmon}.  

Here we review some estimates (see e.g. \cite{Sc2,eg2,eg3}) for $R_0^\pm(z^2)$ needed to study the Dirac evolution.  To best utilize these expansions, we employ the notation
$$
f(z)=\widetilde O(g(z))
$$
to denote
$$
\frac{d^j}{dz^j} f = O\big(\frac{d^j}{dz^j} g\big),\,\,\,\,\,j=0,1,2,3,...
$$
The notation refers to derivatives with respect to the spectral variable $z$, or $|x-y|$ in the expansions for
the integral kernel of the free resolvent operator, which is a function of the variable $\rho=z|x-y|$.
If the derivative bounds hold only for the first $k$ derivatives we  write $f=\widetilde O_k (g)$.  In addition, if we write $f=\widetilde O_k(1)$, we mean that differentiation up to order $k$ is comparable to division by $z$ and/or $|x-y|$.
This notation applies to operators as well as
scalar functions; the meaning should be clear from the
context.

\begin{lemma} \label{lem:schro_resol} Let $0<z\ll 1$. For $z|x-y|<1$, we have the expansions
	\begin{multline}\label{eq:freschroresolv}
		R_0^{\pm}(z^2) =g^\pm(z)+G_0
		+  \widetilde 
		O_2(z^2|x-y|^2\log(z|x-y|))\\
		 =g^\pm(z)+G_0
		+g_1^\pm(z) G_1+z^2 G_2+ \widetilde 
		O_2(z^4|x-y|^4\log(z|x-y|)),
	\end{multline}
 where
\begin{align}
	g^\pm(z)&=-\frac1{2\pi} \big(\log(z/2)+\gamma\big)\pm\frac{i}4 \label{g def}\\
	g_1^\pm(z)&=-\frac{z^2}4g^\pm(z)-\frac{z^2}{8\pi}  \label{g1 def}
\end{align}
\begin{align}
	G_0f(x)&=-\frac{1}{2\pi}\int_{\R^2} \log|x-y|f(y)\,dy, \label{G0 def}\\
	G_1f(x)&=\int_{\R^2} |x-y|^2f(y)\, dy,\label{G1 def}\\
	G_2f(x)&= \frac1{8\pi}\int_{\R^2} |x-y|^2\log|x-y|f(x)\, dy.\label{G2 def}
\end{align}
For $z|x-y|>1$, we have 
\be \label{R0large}
	R_0^\pm(z^2)(x,y)=   e^{\pm iz|x-y|} \omega_\pm(z|x-y|), \,\,\,\,\,\,
	|\omega_{\pm}^{(j)}(y)|\lesssim (1+|y|)^{-\frac{1}{2}-j},\,\,\,j=0,1,2,\ldots.
\ee
\end{lemma}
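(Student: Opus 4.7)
The plan is to reduce the entire statement to the explicit Hankel function representation of the two-dimensional free Schr\"odinger resolvent. Since in $\R^2$ the integral kernel is $R_0^{\pm}(z^2)(x,y) = \pm\tfrac{i}{4} H_0^{\pm}(z|x-y|)$, where $H_0^{(1)} = J_0 + iY_0$ and $H_0^{(2)} = J_0 - iY_0$ are the Hankel functions of order zero, the whole problem reduces to analyzing $H_0^\pm(\rho)$ with $\rho = z|x-y|$ separately in the regimes $\rho < 1$ and $\rho > 1$.

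For the small-argument regime, I would substitute the classical power series $J_0(\rho)= \sum_{k\geq 0} \frac{(-1)^k}{(k!)^2}(\rho/2)^{2k}$ and $Y_0(\rho) = \frac{2}{\pi}\big[\log(\rho/2)+\gamma\big] J_0(\rho) + E(\rho^2)$, where $E$ is an entire even function with $E(0)=0$. After splitting $\log(z|x-y|/2)+\gamma = [\log(z/2)+\gamma] + \log|x-y|$, the constant-in-$|x-y|$ pieces of $\pm\tfrac{i}{4}H_0^\pm$ collect precisely into $g^\pm(z)$ and the pure $\log|x-y|$ piece is the kernel of $G_0$ defined in \eqref{G0 def}. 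Reading the next order in the series and performing the same separation of the $z$- and $|x-y|$-dependence produces the $g_1^\pm(z) G_1$ and $z^2 G_2$ terms, with the specific prefactors in \eqref{g1 def}--\eqref{G2 def} emerging from the $k=1$ coefficients of $J_0$ and $Y_0$. The tail is absolutely convergent for $\rho<1$ with geometric rate, so it is controlled by its leading omitted term, which is $O(\rho^4 \log\rho)$.

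For the large-argument regime, I would invoke the classical asymptotic expansion $H_0^\pm(\rho) = \sqrt{2/(\pi\rho)}\, e^{\pm i(\rho-\pi/4)}\,(1+O(\rho^{-1}))$, valid for $\rho\gtrsim 1$ with symbol-type bounds on the amplitude. Factoring out $e^{\pm i\rho}$ and absorbing the constant phase $e^{\mp i\pi/4}$ into the amplitude yields the desired form $\omega_\pm(z|x-y|) e^{\pm iz|x-y|}$, and the derivative bound $|\omega_\pm^{(j)}(\rho)| \lesssim (1+\rho)^{-1/2-j}$ is read off directly from the fact that each $\rho$-derivative of the amplitude picks up a factor of $\rho^{-1}$.

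The only point that requires genuine care is verifying the $\widetilde O_k$ bounds on the remainders in the small-$\rho$ expansion. This amounts to checking that differentiation with respect to $z$ (or equivalently with respect to $|x-y|$, since $\rho$ depends on them symmetrically) of a term of the form $\rho^{2k}\log\rho$ behaves like division by $z$ (or by $|x-y|$) up to a harmless logarithmic loss; one then sums the tail of the termwise-differentiated series, which converges uniformly on compact subsets of $\{\rho<1\}$. This bookkeeping, and in particular the careful tracking of log factors through derivatives, is the main technical item, though no substantive analytic obstacle arises.
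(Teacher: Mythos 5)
Your proposal is correct and follows the standard route. The paper actually states this lemma without proof, citing \cite{Sc2,eg2,eg3} for the estimates; those references (in particular Schlag's two-dimensional dispersive paper) prove them exactly as you outline, by writing $R_0^\pm(z^2)(x,y)=\pm\tfrac{i}{4}H_0^\pm(z|x-y|)$, expanding $J_0$ and $Y_0$ in power series and splitting $\log(\rho/2)+\gamma$ into a $z$-part and an $|x-y|$-part to isolate $g^\pm$, $G_0$, $g_1^\pm G_1$, $z^2 G_2$, and using the large-argument Hankel asymptotics for $\rho\gtrsim 1$. Your bookkeeping of the $\widetilde O_2$ derivative bounds (each $\partial_z$ or $\partial_{|x-y|}$ on $\rho^{2k}\log\rho$ costing a factor $z^{-1}$ or $|x-y|^{-1}$, respectively) is also the right point to check and is handled correctly.
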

We develop expansions on the positive portion of the
spectrum, $[m,\infty)$.   The negative branch, $(-\infty,-m]$, can be studied analogously with only minor changes, see Remark~\ref{rmk:m swave} below. We write $\lambda=\sqrt{m^2+z^2}$ with 
$0<z\ll 1$. Using  \eqref{eqn:resolvdef} we have
\begin{multline}\label{eq:dr1}
	\mR_0^\pm(\lambda)=\left[-i\alpha \cdot \nabla + m \beta +\sqrt{m^2+z^2} I\right]R_0^\pm(z^2) = \\  
	\left[-i\alpha \cdot \nabla + m (\beta +I)+\frac{z^2}{2m} I +\widetilde O(z^4) I\right]R_0^\pm(z^2).
\end{multline}
We now employ the following notational conventions. The operators  $M_{11}$ and $M_{22}$
are defined to be matrix-valued operators with kernels 
\begin{align*}
	M_{11}(x,y)=\left(\begin{array}{cc}
		1 & 0 \\ 0 & 0
	\end{array}\right), \qquad M_{22}(x,y)=\left(\begin{array}{cc}
		0 & 0 \\ 0 & 1
	\end{array}\right).
\end{align*}
We also define the projection operators $I_1, I_2$ by
\begin{align*}
	I_1 \left(\begin{array}{c}
		a \\ b\end{array}\right)=\left(\begin{array}{c}
		a \\0\end{array}\right),\qquad
	I_2 \left(\begin{array}{c}
		a \\ b\end{array}\right)=\left(\begin{array}{c}
		0 \\ b\end{array}\right).	
\end{align*}
Using \eqref{eq:freschroresolv} and \eqref{eq:dr1}, we have (for $z|x-y|<1$, $0<z\ll 1$, $\lambda=\sqrt{z^2+m^2}$)
\begin{multline} \label{r0temp}  
	\mR_0^\pm(\lambda)=	 
	\left[-i\alpha \cdot \nabla + 2m I_1 +\frac{z^2}{2m} I +\widetilde O(z^4) I\right]    \\                                                             
	\left[g^\pm(z)+G_0+g_1^\pm(z) G_1 +z^2 G_2+ \widetilde O_2(z^4|x-y|^4\log(z|x-y|))\right].
\end{multline}
We define
\begin{align} \label{def:mG0}
	\mathcal G_0&=-i\alpha \cdot \nabla G_0 
	+2mG_0 I_{1}\\
 \label{def:mG1}	\mathcal G_1&=-i\alpha \cdot \nabla G_1
	+ 2mG_1 I_{1}-\frac2mM_{11}-\frac2mM_{22}\\
 \label{def:mG2}	\mathcal G_2&=-i\alpha \cdot \nabla G_2+2m G_2 I_1+\frac1{2m}G_0 -\frac1{4\pi m}M_{11}-\frac1{4\pi m}M_{22}.
\end{align}
We further define the function $\log^-(y):=-\log (y) \chi_{\{0<y<1\}}$.
Using this notation, the expansion \eqref{r0temp}  can be written as  
\be \label{eq:R0exp}
	  \mR_0^\pm(\lambda) =	  2m g^\pm (z) M_{11}+\mathcal G_0+  
	 \widetilde O_2\big(z^{k}(|x-y|^{k}+\log^-|x-y|)\big),\,\,\,0<k<2,
\ee
or as
\begin{multline}	 
	\label{eq:R0exp2} \mR_0^\pm(\lambda) = 2m g^\pm (z) M_{11}+\mathcal G_0+ g_1^\pm (z) \mathcal G_1   + z^2 \mathcal G_2 \\
	+\widetilde O_2\big(z^{\ell}(|x-y|^{\ell}+\log^-|x-y|)\big),\,\,\,2<\ell<4.
\end{multline}
To obtain these formulas, we write using \eqref{g1 def} that 
$$
\frac{z^2}{2m}g^\pm(z)=\frac{z^2}{2m}g^\pm(z)[M_{11}+M_{22}]= -\frac{z^2}{4\pi m}[M_{11}+M_{22}]-\frac2m g_1^\pm(z) [M_{11}+M_{22}].
$$
In this expansion we chose to group terms by their size with respect
to the spectral variable $z$  rather than by operator.

Combining the expansions \eqref{eq:R0exp} and \eqref{eq:R0exp2} with the high energy expansion \eqref{R0large}, we obtain 
\begin{lemma}\label{lem:R0exp}	
	We have the following expansion for the kernel of the free resolvent, $\lambda=\sqrt{m^2+z^2},$ $0<z\ll 1$
	\begin{align*}
		\mR_0^{\pm}(\lambda)(x,y) =2m  g^\pm (z) M_{11}+\mathcal G_0(x,y)
		+E_0^{\pm}(z)(x,y),
	\end{align*}
	where $E_0^{\pm}$ satisfies the bounds 
	$$
		|E_0^{\pm}|\les z^{k }(|x-y|^{k }+\log^-|x-y|), \,\,
		|\partial_z E_0^{\pm}|\les z^{k-1}(|x-y|^{k }+\log^-|x-y|),  
	$$
	for any $\frac12\leq k<2$.
	Furthermore, we have
	$$ 
	E_0^{\pm}(z)(x,y)=g_1^\pm (z) \mathcal G_1   + z^2 \mathcal G_2+E_1^\pm(z)(x,y),
	$$
	where 
	$$
	|E_1^{\pm}|\les z^{\ell }(|x-y|^{\ell}+\log^-|x-y|), \,\,
		|\partial_z E_1^{\pm}|\les z^{\ell-1}(|x-y|^{\ell}+\log^-|x-y|), 
	$$
	for any $2<\ell<4$.
\end{lemma}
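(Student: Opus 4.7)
\medskip
\noindent\textbf{Proof plan.} The plan is to assemble the stated expansion by splicing together the two regimes $z|x-y|\le 1$ and $z|x-y|\ge 1$, using the two small-energy expansions and the large-argument bound that have already been recorded. In the regime $z|x-y|\le 1$ the expansions \eqref{eq:R0exp} and \eqref{eq:R0exp2} already present $\mathcal{R}_0^\pm(\lambda)(x,y)$ in exactly the desired form, so one simply reads off $E_0^\pm$ as the $\widetilde O_2(z^k(|x-y|^k+\log^-|x-y|))$ remainder of \eqref{eq:R0exp} (valid for any $0<k<2$, in particular for $\frac12\le k<2$), and likewise $E_1^\pm$ from \eqref{eq:R0exp2}. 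Both the pointwise and $\partial_z$ bounds are built into the $\widetilde O_2$ notation.

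\medskip
In the regime $z|x-y|>1$, since $z\ll 1$ one has $|x-y|>1/z\gg 1$, so $\log^-|x-y|=0$ and the desired bound reduces to $|E_0^\pm|\les(z|x-y|)^k$. First I would apply the factorization \eqref{eq:dr1} to the large-argument bound \eqref{R0large}; since $-i\alpha\cdot\nabla$ applied to $e^{\pm iz|x-y|}\omega_\pm(z|x-y|)$ produces only an extra factor of $z$, and the remaining operator $2mI_1+\frac{z^2}{2m}I+\widetilde O(z^4)I$ is bounded, this gives
$$
|\mathcal{R}_0^\pm(\lambda)(x,y)|\les (z|x-y|)^{-1/2},\qquad |\partial_z\mathcal{R}_0^\pm(\lambda)(x,y)|\les |x-y|(z|x-y|)^{-1/2}.
$$
Next I would estimate the subtracted leading terms $2mg^\pm(z)M_{11}+\mathcal{G}_0(x,y)$: from \eqref{g def}, \eqref{G0 def} and \eqref{def:mG0}, the only logarithmically growing entry is the $(1,1)$ one, and there the $\log(z/2)$ from $g^\pm$ combines with the $\log|x-y|$ from $G_0$ into $-\frac{m}{\pi}\log(z|x-y|/2)+O(1)+O(1/|x-y|)$. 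For $z|x-y|>1$ this is $\les \log(z|x-y|)\les (z|x-y|)^k$ for any $k>0$. Combining and using $(z|x-y|)^k\ge 1\ge (z|x-y|)^{-1/2}$ when $z|x-y|>1$ and $k\ge\frac12$, I obtain $|E_0^\pm|\les z^k|x-y|^k$ as required; for $\partial_z$, $\partial_z(2mg^\pm(z)M_{11})=O(1/z)$, and both $1/z$ and $|x-y|(z|x-y|)^{-1/2}$ are dominated by $z^{k-1}|x-y|^k$ in this regime.

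\medskip
For the refined identity $E_0^\pm=g_1^\pm(z)\mathcal{G}_1+z^2\mathcal{G}_2+E_1^\pm$ the procedure is the same: in the small-argument regime $E_1^\pm$ is read off from \eqref{eq:R0exp2}, and in the large-argument regime one checks directly from \eqref{g1 def}, \eqref{def:mG1} and \eqref{def:mG2} that $g_1^\pm(z)\mathcal{G}_1+z^2\mathcal{G}_2$ (whose coefficients are $O(z^2|\log z|)$ and $O(z^2)$, multiplying $\mathcal{G}_1$ and $\mathcal{G}_2$ of size at most $|x-y|^2$ and $|x-y|^2\log|x-y|$ plus lower-order pieces) satisfies the error bound with parameter $\ell\in(2,4)$, after which $E_1^\pm$ inherits the stated bound via the same combination as before. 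The main technical point throughout is the cancellation of the separately unbounded $|\log z|$ and $\log|x-y|$ logarithms in the $(1,1)$ entry of $2mg^\pm(z)M_{11}+\mathcal{G}_0$ into the single factor $\log(z|x-y|)$, which has the right size on \emph{both} sides of the transition $z|x-y|\sim 1$; without this cancellation the subtracted leading terms would blow up in the large-argument regime and no uniform error bound of the form $z^k|x-y|^k$ would be possible.
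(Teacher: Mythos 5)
Your proposal is correct and takes essentially the same route as the paper's proof. Both split into the regimes $z|x-y|<1$ (where $E_0^\pm$, $E_1^\pm$ are read off directly from \eqref{eq:R0exp} and \eqref{eq:R0exp2}) and $z|x-y|\gtrsim 1$ (where one bounds $\partial_z^j\mR_0^\pm$ from \eqref{R0large} and \eqref{eq:dr1}, then estimates the subtracted leading-order pieces and combines). If anything, your write-up is slightly more explicit at the key step than the paper's: the paper records only the bound $z^{-j}(z|x-y|)^{0+}$ on the subtracted term $2mg^\pm(z)M_{11}+\mG_0$ in the region $z|x-y|>1$ without comment, while you correctly identify that this bound \emph{depends} on the $\log(z/2)$ from $g^\pm$ and the $\log|x-y|$ from $G_0$ combining into $\log(z|x-y|)$ in the $(1,1)$ entry; each of $-\frac{m}{\pi}\log z$ and $-\frac{m}{\pi}\log|x-y|$ is separately unbounded (with opposite signs) in that regime, so this is a genuine cancellation that the argument cannot do without. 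One cosmetic imprecision: the $O(1/|x-y|)$ you write in the $(1,1)$ estimate actually lives in the off-diagonal entries of $\mG_0$, coming from $-i\alpha\cdot\nabla G_0$ (the matrices $\alpha_1,\alpha_2$ are off-diagonal), while the $(1,1)$ entry is just $-\frac{m}{\pi}\log|x-y|$; this does not affect your conclusion. The remaining steps (matching the exponent constraints $k\ge\frac12$, $\ell>2$ to the $z^{-1/2}|x-y|^{j-1/2}$ bounds from the oscillatory piece, and the analogous analysis for $E_1^\pm$) are as in the paper.
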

\begin{proof}
For $z|x-y|<1$, we already obtained the required bound in \eqref{eq:R0exp}.

 For $z |x-y|\gtrsim 1$, using   \eqref{R0large}, 
	we have (with $\lambda=\sqrt{m^2+z^2}$)
	\begin{multline*} 
	\mR_0^\pm(\lambda)=[-i\alpha \cdot \nabla + m \beta +\sqrt{m^2+z^2} I] R_0^\pm(z^2)   \\  
	=[-i\alpha \cdot \nabla + m \beta +\sqrt{m^2+z^2} I] [e^{\pm iz|x-y|} \omega_\pm(z|x-y|)].
\end{multline*}
Therefore, for $z|x-y|\gtrsim 1$ and $0<z\ll1$  (in this case $|x-y|\gtrsim 1$), we have
\begin{align*}
|\partial_z^j \mR_0^\pm(\lambda)(x,y)|\les z^{-1/2}|x-y|^{j-1/2}.
\end{align*}
Also using
$$
		E_0^\pm(z)  =  \mR_0^\pm(\lambda)  - 2m g^\pm (z) M_{11} -  \mG_0,
		 $$
we have for $j=0,1,$
	\begin{align*}
		\left|\partial_z^j E_0^\pm(z)(x,y)\chi_{\{z|x-y|>1\}}\right| \lesssim   z^{-1/2}|x-y|^{j-1/2} + z^{-j}(z|x-y|)^{0+}.
	\end{align*}
Now, note that for any $\frac12\leq k<2$, and for $j=0,1$, we have
	\begin{multline*}
		|\partial_z^j E_0^{\pm}(z)(x,y)|  \lesssim      \big[z^{k-j}(|x-y|^{k}+\log^-|x-y|) \chi_{\{z|x-y|<1\}}\\
		+ ( z^{-1/2}|x-y|^{j-1/2}+ z^{-j}(z|x-y|)^{0+})\chi_{\{z|x-y|>1\}} \big] \\ \lesssim  z^{k-j}(|x-y|^{k}+\log^-|x-y|).
		\end{multline*}
		The proof for $E_1^\pm$ is similar. We already obtained the required bound in the case $z|x-y|<1$ in \eqref{eq:R0exp2}. 
		For $z|x-y|>1$, using the high energy estimate above, and
		\begin{align*} 
		 E_1^{\pm}   = \mR_0^\pm(\lambda)   -  2m  g^\pm (z) M_{11}-  \mG_0 -g_1^\pm (z) \mathcal G_1  - z^2  \mathcal G_2,
	\end{align*}
	we obtain (note that $|x-y|\gtrsim 1$)
$$
		\left|\partial_z^j E_1^{\pm}(z)(x,y)\chi_{\{z|x-y|>1\}}\right|    \lesssim  z^{-1/2}|x-y|^{j-1/2} + z^{-j}(z|x-y|)^{2+}    \les z^{ -j }(z|x-y|)^{2+}.
$$
	Hence, for any $2<\ell<4$ and $j=0,1,$ we have
	 \begin{multline*}	
		\left|\partial_z^j E_1^{\pm}(z)(x,y)\right|  \lesssim    \big[  z^{\ell-j } (|x-y|^{\ell}+ \log^-|x-y|)  \chi_{\{z|x-y|<1\}}
		+ z^{ -j }(z|x-y|)^{2+}\chi_{\{z|x-y|>1\}}\big]\\ \lesssim z^{\ell-j} (|x-y|^{\ell}+ \log^-|x-y|). \qedhere
	\end{multline*}

\end{proof}

\section{Free Dirac dispersive estimates}\label{sec:free}

Due to the relationship between the free Dirac evolution and the
free Klein-Gordon equation, $D_m^2 f=(-\Delta+m^2)f$,
we can expect a natural time decay rate of size $|t|^{-\f12}$ 
as one has in the wave equation (when $m=0$) or   Klein-Gordon equation (when $m>0$), provided the initial data has more than $\f 32$ weak derivatives in $L^1(\R^2)$.
In the case of Dirac equation, as in Klein-Gordon, the time decay can be improved to $\la t\ra^{-1}$ for smoother initial data.   In particular, we have  the following theorem bounding the evolution  between classical Besov spaces. 
 
\begin{theorem}\label{thm:free high} Fix $j\in \N$. Let $\chi_j$ be a smooth cut-off for the set $\lambda\approx 2^j$. Then,   
the free Dirac equation satisfies: 
	\begin{align}\label{eqn:freeDirac high}
		\|e^{-itD_m} \chi_j(D_m) f\|_{L^\infty} \les 
		 \min\left(2^{2j}, 2^{\frac{3j}2}|t|^{-1/2}, 2^{2j} |t|^{-1}\right)
		\|f\|_{L^1}.
	\end{align}
Let  $ \chi_0(\lambda) $ be a smooth cut-off for a small neighborhood of $m$.   Then,   
we have
	\begin{align}\label{eqn:freeDirac low}
		\|e^{-itD_m} \chi_0(D_m) f\|_{L^\infty} \les 
		 \frac{1}{\langle t\ra }
		\|f\|_{L^1}.
	\end{align}
\end{theorem}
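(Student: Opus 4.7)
The plan is to apply Stone's formula to express
\begin{equation*}
 e^{-itD_m}\chi(D_m)f=\frac{1}{2\pi}\int e^{-it\lambda}\chi(\lambda)[\mR_0^+-\mR_0^-](\lambda)f\,d\lambda,
\end{equation*}
restrict to $\lambda\ge m$ by symmetry (the case $\lambda\le -m$ is analogous), substitute $\lambda=\sqrt{z^2+m^2}$ so that $d\lambda=(z/\lambda)\,dz$, and combine \eqref{eqn:resolvdef} with the classical identity $[R_0^+-R_0^-](z^2)(x,y)=\tfrac{i}{2}J_0(z|x-y|)$ in order to reduce everything to oscillatory integrals in the $z$ variable.

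For the high-energy bounds \eqref{eqn:freeDirac high}, I would use \eqref{R0large} to decompose $J_0(z|x-y|)=\sum_{\pm} e^{\pm iz|x-y|}\omega_\pm(z|x-y|)$ in the regime $z|x-y|\gtrsim 1$, with $|\omega_\pm^{(k)}(\rho)|\les (1+\rho)^{-1/2-k}$; the complementary region $z|x-y|\les 1$ is negligible at high frequency. After applying $D_m+\lambda$, the kernel of $e^{-itD_m}\chi_j(D_m)$ becomes a sum of oscillatory integrals
\begin{equation*}
 K_\pm(t,x,y)=\int e^{i\phi_\pm(z)}\,b_\pm(z,x,y)\,\chi_j(\lambda(z))\,\frac{z}{\lambda(z)}\,dz,
\end{equation*}
with phase $\phi_\pm(z)=-t\lambda(z)\pm z|x-y|$ and amplitude $|b_\pm|\les 2^j(1+z|x-y|)^{-1/2}$, whose $z$-derivatives do not produce extra $|x-y|$ factors (since the oscillation $e^{\pm iz|x-y|}$ has been separated out). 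The three bounds then correspond to three estimates of these integrals: (i) the trivial bound $2^{2j}$ by taking absolute values on a $z$-support of length $\sim 2^j$; (ii) the wave-like bound $2^{3j/2}|t|^{-1/2}$ by van der Corput's lemma, using $|\phi_\pm''(z)|=|t|m^2/\lambda(z)^3\gtrsim |t|2^{-3j}$ to extract the factor $\sqrt{2^{3j}/|t|}$; and (iii) the Schr\"odinger-like bound $2^{2j}|t|^{-1}$ by 1D stationary phase at the critical point $z_\ast$ determined by $|x-y|=tz_\ast/\lambda(z_\ast)$. Since this condition forces $|x-y|\sim|t|$, the amplitude decay yields $|b_\pm(z_\ast)|\sim 2^{j/2}|t|^{-1/2}$, so stationary phase gives $\sqrt{2^{3j}/|t|}\cdot 2^{j/2}|t|^{-1/2}= 2^{2j}|t|^{-1}$; when no critical point lies in the support, one integration by parts in $z$ provides the same rate.

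For the low-energy bound \eqref{eqn:freeDirac low}, apply Lemma~\ref{lem:R0exp} to write
\begin{equation*}
 [\mR_0^+-\mR_0^-](\lambda)(x,y)=2m[g^+(z)-g^-(z)]M_{11}+[E_0^+-E_0^-](z)(x,y)=im\,M_{11}+[E_0^+-E_0^-],
\end{equation*}
since $\mG_0$ cancels in the difference and $g^+-g^-=i/2$ by \eqref{g def}. The constant leading term contributes $im\,M_{11}\int_0^{z_0}e^{-it\sqrt{z^2+m^2}}\chi_0(\lambda(z))(z/\lambda(z))\,dz$, which yields $\la t\ra^{-1}$ by one integration by parts in $z$ (the boundary contribution at $z=0$ vanishes thanks to the Jacobian factor $z/\lambda$). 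The error contribution is handled by the same integration by parts, relying on the bound $|\partial_z E_0^\pm|\les z^{k-1}(|x-y|^k+\log^-|x-y|)$ from Lemma~\ref{lem:R0exp}, which is integrable on the small $z$-interval for $k>0$.

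The main obstacle is controlling the polynomial $|x-y|$ growth of the low-energy error term, which is not uniform. The plan is to split the $z$-integral at $z|x-y|\sim 1$: for $z|x-y|\les 1$ the small-argument expansion applies directly, while for $z|x-y|\gtrsim 1$ one reverts to the oscillatory representation \eqref{R0large}, absorbing the $|x-y|$ factor into the phase and recovering the $|t|^{-1}$ decay by the same stationary phase and integration by parts arguments used in the high-energy analysis.
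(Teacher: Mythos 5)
Your overall strategy matches the paper's: Stone's formula, the substitution $\lambda=\sqrt{z^2+m^2}$, the identity $[R_0^+-R_0^-](z^2)=\tfrac{i}{2}J_0(z|x-y|)$, a split at $z|x-y|\sim 1$ into a ``near'' part (direct estimation and one integration by parts, giving $\min(2^{2j},2^j/|t|)$) and a ``far'' oscillatory part treated by stationary phase (Lemmas~\ref{lem:high stat phase} and~\ref{lem:high stat phase2}). Your initial low-energy step through Lemma~\ref{lem:R0exp} is slightly different but, as you note, you must revert to the $z|x-y|\sim 1$ split because the pointwise error bound grows in $|x-y|$; from there the argument is the paper's. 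The key observation both proofs need is that when the critical point $z_*$ lies in the $\chi_j$-support one has $|x-y|\approx|t|$, so the Bessel decay gives $(1+z|x-y|)^{-1/2}\approx(2^j|t|)^{-1/2}$.

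However, your attribution of the middle bound $2^{3j/2}|t|^{-1/2}$ to van der Corput does not work out. With $|\phi''|\gtrsim|t|2^{-3j}$ and the naive amplitude bound $\sup|b_\pm|\lesssim 2^j$, van der Corput yields $2^{3j/2}|t|^{-1/2}\cdot 2^j=2^{5j/2}|t|^{-1/2}$, which is never the minimum of the three quantities, so you would lose the claimed rate in the regime $2^{-j}\lesssim|t|\lesssim 2^j$ (and with it the exponent in the Corollary). The correct route, which is what the paper does, is to invoke the critical-point observation $\sup|b_\pm|\lesssim 2^{j/2}|t|^{-1/2}$ \emph{already} for this bound and then simply integrate $|b_\pm|$ directly over the $O(2^j)$-length $z$-interval to obtain $2^j\cdot 2^{j/2}|t|^{-1/2}=2^{3j/2}|t|^{-1/2}$; van der Corput with that improved amplitude bound gives the third bound $2^{2j}|t|^{-1}$, not the second. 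Since you do use the critical-point observation in (iii), this is a misattribution rather than a missing idea, but as written the chain of estimates for (ii) is wrong. A second, minor point: the boundary term at $z=0$ in your low-energy integration by parts does not actually vanish (it equals $\tfrac{1}{it}e^{-itm}\chi_0(m)$ up to a constant), but it is $O(|t|^{-1})$, so the conclusion is unaffected.
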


This estimate can best be viewed as a mapping on Besov spaces:\footnote{ For $s\in \R$ and $1\leq p,q\leq \infty$ , we define the Besov space $B_{p,q}^s(\R^2, \C^2)$ to be the space of all tempered distributions $f$ such that
	$$
		\|f\|_{B^s_{p,q}}=\left(\|P_{<1}f\|_p^q+\sum_{j=1}^\infty 2^{jsq} 
		\|P_j   f\|_p^q	\right)^{\f1q}<\infty,
	$$
	where $P_{<1}$ and $P_j$ are the usual Littlewood-Paley projections.}
 \begin{corollary}

	Under the assumptions of Theorem~\ref{thm:free high}, for any $\theta\in [0,\f12]$ and $s-s'\geq \f32+\theta$, we have the following bounds
	$$
		\|e^{-itD_m}\|_{B^s_{1,1}\to B^{s'}_{\infty,1}} \leq C
		\left\{\begin{array}{cc}
			|t|^{-\f12+\theta } & 0<|t|<1\\
			|t|^{-\f12-\theta } & |t|\geq 1
		\end{array}
		\right.
	$$

\end{corollary}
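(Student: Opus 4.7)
The plan is to combine the three pointwise dispersive bounds of Theorem~\ref{thm:free high} by a dyadic Littlewood--Paley interpolation. First I will decompose
\begin{equation*}
\|e^{-itD_m}f\|_{B^{s'}_{\infty,1}}=\|P_{<1}e^{-itD_m}f\|_{L^\infty}+\sum_{j\geq 1}2^{js'}\|P_je^{-itD_m}f\|_{L^\infty},
\end{equation*}
using that $e^{-itD_m}$ is a matrix Fourier multiplier and therefore commutes with each $P_j$. Since the dispersion relation $\lambda=\sqrt{|\xi|^2+m^2}$ sends the annulus $|\xi|\approx 2^j$ into $\lambda\approx 2^j$ whenever $2^j\gtrsim m$, for all but finitely many $j\geq 1$ I can write $P_j=\widetilde\chi_j(D_m)P_j$ for a slight fattening of the spectral cutoff and invoke \eqref{eqn:freeDirac high} to get
\begin{equation*}
\|P_je^{-itD_m}f\|_{L^\infty}\les \min\bigl(2^{2j},\,2^{\frac{3j}{2}}|t|^{-1/2},\,2^{2j}|t|^{-1}\bigr)\|P_jf\|_{L^1}.
\end{equation*}

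Next I would interpolate the min by a weighted geometric mean with weight $2\theta\in[0,1]$. For $|t|\geq 1$, pairing the wave bound with the Klein--Gordon bound yields
\begin{equation*}
(2^{\frac{3j}{2}}|t|^{-1/2})^{1-2\theta}(2^{2j}|t|^{-1})^{2\theta}=2^{j(\frac{3}{2}+\theta)}|t|^{-\frac{1}{2}-\theta},
\end{equation*}
while for $0<|t|<1$, pairing the trivial bound with the wave bound yields
\begin{equation*}
(2^{\frac{3j}{2}}|t|^{-1/2})^{1-2\theta}(2^{2j})^{2\theta}=2^{j(\frac{3}{2}+\theta)}|t|^{-\frac{1}{2}+\theta}.
\end{equation*}
In both cases the $j$-dependence is $2^{j(3/2+\theta)}$, which is precisely balanced against the hypothesis $s-s'\geq 3/2+\theta$: multiplying by $2^{js'}$ makes the weight $2^{j(s'-s+3/2+\theta)}\leq 1$, so the sum is controlled by $|t|^{-1/2\pm\theta}\sum_{j\geq 1}2^{js}\|P_jf\|_{L^1}\les |t|^{-1/2\pm\theta}\|f\|_{B^s_{1,1}}$, with the sign of $\theta$ matching the regime of $|t|$.

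The low-frequency piece is dispatched by \eqref{eqn:freeDirac low}, applied to the two spectral branches $\pm m$: it gives $\|P_{<1}e^{-itD_m}f\|_{L^\infty}\les \la t\ra^{-1}\|P_{<1}f\|_{L^1}\les \la t\ra^{-1}\|f\|_{B^s_{1,1}}$, which for $\theta\in[0,\frac{1}{2}]$ is dominated by $|t|^{-1/2+\theta}$ on $0<|t|<1$ (both sides are $\les 1$) and by $|t|^{-1/2-\theta}$ on $|t|\geq 1$ (since $1\geq \frac{1}{2}+\theta$). The main nuisance I anticipate is the bookkeeping in the transition region $|D_m|\approx m$, where the Littlewood--Paley scale $2^j$ and the spectral scale of $D_m$ cease to be comparable; only finitely many dyadic shells are affected, and they can be regrouped with $P_{<1}$ and absorbed into the low-frequency estimate.
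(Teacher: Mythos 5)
Your proof is correct, and it is the standard (and surely intended) argument; the paper states the corollary without proof, so there is nothing to compare against beyond the theorem it is derived from. A few small observations worth recording. The interpolation arithmetic checks out: for $|t|\geq 1$, interpolating $2^{3j/2}|t|^{-1/2}$ and $2^{2j}|t|^{-1}$ with weights $1-2\theta$ and $2\theta$ produces exactly $2^{j(3/2+\theta)}|t|^{-1/2-\theta}$, while for $0<|t|<1$ interpolating $2^{3j/2}|t|^{-1/2}$ and $2^{2j}$ gives $2^{j(3/2+\theta)}|t|^{-1/2+\theta}$; in both cases the $j$-growth is $2^{j(3/2+\theta)}$, absorbed by $s-s'\geq 3/2+\theta$, and the elementary inequality $\min(a,b)\leq a^{1-\alpha}b^\alpha$ is all that is needed. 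Your commutation of $P_j$ with $e^{-itD_m}$ is legitimate (scalar Fourier multiplier vs.\ matrix Fourier multiplier), and the identification $P_j=\widetilde\chi_j(|D_m|)P_j$ for $2^j\gg m$ is correct once one understands the cutoff as applying to both spectral branches $\pm\sqrt{|\xi|^2+m^2}$, which you do note. The low-frequency piece $P_{<1}$ and the finitely many intermediate shells where $|\xi|\approx 2^j$ and $|D_m|\approx 2^j$ are not yet comparable are exactly the only place requiring care, and your plan (absorb them into the $\langle t\rangle^{-1}$ low-energy estimate of \eqref{eqn:freeDirac low}) works, since $\langle t\rangle^{-1}\lesssim |t|^{-1/2+\theta}$ on $0<|t|<1$ and $\langle t\rangle^{-1}\leq |t|^{-1/2-\theta}$ on $|t|\geq 1$ for all $\theta\in[0,1/2]$.
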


First note that (with $\lambda=\sqrt{z^2+m^2}$)
\begin{align*}
  \int_m^\infty e^{-it\lambda} \chi_j(\lambda) [\mathcal R_0^+(\lambda)
	-\mathcal R_0^-(\lambda)] \, d\lambda
	& =  \int_0^\infty e^{-it\sqrt{z^2+m^2}}
	\frac{z \chi_j(z) [\mathcal R_0^+(\lambda)
	-\mathcal R_0^-(\lambda)]}{\sqrt{z^2+m^2}} \, dz.
\end{align*}
Using \eqref{eq:dr1}, the formula  $[R_0^+-R_0^-](z^2)(x,y)=\frac{i}{2}J_0(z|x-y|)$, and
the asymptotics for the Bessel function, see \cite{AS},  we can write 
\begin{multline*}
\frac{\mathcal R_0^+(\lambda)
	-\mathcal R_0^-(\lambda)}{\sqrt{z^2+m^2}}=
	\frac{(-i\alpha \cdot \nabla +m\beta +\sqrt{z^2+m^2}I)}{\sqrt{z^2+m^2}} [R_0^+
	-R_0^-](z^2)(x,y) \\ =	\left\{\begin{array}{ll}
		\widetilde O_1(z/\sqrt{z^2+m^2}) & z|x-y|\ll 1\\
		e^{iz|x-y|}\widetilde \omega_+(z|x-y|)
		+e^{-iz|x-y|}\widetilde \omega_-(z|x-y|) & z|x-y|\gtrsim 1
	\end{array}
	\right.
\end{multline*}
where  $\widetilde \omega_\pm(z|x-y|)$ satisfies the same properties as $\omega_\pm(z|x-y|)$ in \eqref{R0large}.
Therefore, it suffices to consider the integrals 
\be\label{eq:free1stint}
	\int_0^\infty e^{-it\sqrt{z^2+m^2}}
 z \chi_j(z)  
	 \widetilde O_1(z/{\sqrt{z^2+m^2}}) 
	\, dz,
\ee
and 
\be\label{eq:free2ndint}
	\int_0^\infty e^{-it\sqrt{z^2+m^2}\pm iz|x-y|}
	 z \chi_j(z) 
	 \widetilde \omega_\pm(z|x-y|)\widetilde \chi(z|x-y|)
	\, dz,
\ee
The integral \eqref{eq:free1stint} is   $O( \min(2^{2j},2^j/|t|))$. The first bound follows since the integrand is bounded by $z\chi_j(z)$, and the second bound follows from an  integration by parts.
To estimate the integral in \eqref{eq:free2ndint}, we apply stationary phase method using   the following (slightly modified) lemma from
\cite{Sc2},
\begin{lemma}\label{stat phase}

	Let $\phi'(z_0)=0$ and $1\leq \phi'' \leq C$.  Then,
$$
    		\bigg| \int_{-\infty}^{\infty} e^{-it\phi(z)} a(z)\, dz \bigg|
    		\lesssim \int_{|z-z_0|<|t|^{-\frac{1}{2}}} |a(z)|\, dz 
    		+|t|^{-1} \int_{|z-z_0|>|t|^{-\frac{1}{2}}} \bigg( \frac{|a(z)|}{|z-z_0|^2}+
    		\frac{|a'(z)|}{|z-z_0|}\bigg)\, dz.
$$

\end{lemma}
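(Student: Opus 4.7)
The plan is to split the integral at the scale $|t|^{-1/2}$ around the stationary point $z_0$, bound the near-field piece trivially, and use integration by parts on the far-field piece. The crucial lower bound $|\phi'(z)|\geq |z-z_0|$ follows from the mean value theorem together with $\phi'(z_0)=0$ and $\phi''\geq 1$; the upper bound $\phi''\leq C$ controls the cost of differentiating $1/\phi'$.

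To avoid picking up interior boundary terms, I would introduce a smooth cutoff. Fix $\chi\in C^\infty_c(\R)$ with $\chi\equiv 1$ on $[-1,1]$ and $\mathrm{supp}\,\chi\subset[-2,2]$, and set $\chi_t(z):=\chi\bigl(|t|^{1/2}(z-z_0)\bigr)$. Decompose
\[
I:=\int_{-\infty}^\infty e^{-it\phi(z)}a(z)\,dz = \int e^{-it\phi}a\,\chi_t\,dz+\int e^{-it\phi}a\,(1-\chi_t)\,dz =: I_1+I_2.
\]
The first piece is immediate: $|I_1|\les \int_{|z-z_0|<2|t|^{-1/2}}|a(z)|\,dz$, which fits inside the first term of the claimed bound, the constant $2$ being absorbed into $\les$.

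For $I_2$, write $e^{-it\phi}=(-it\phi')^{-1}\partial_z e^{-it\phi}$ and integrate by parts, assuming as usual that $a$ is smooth and compactly supported so the endpoint contributions at $\pm\infty$ vanish; the general case follows by density once the right-hand side is known to be finite. Since $1-\chi_t$ vanishes on $|z-z_0|\leq |t|^{-1/2}$, the factor $(1-\chi_t)/\phi'$ is smooth everywhere, and no interior boundary term arises. This yields
\[
I_2 = \frac{1}{it}\int e^{-it\phi(z)}\,\partial_z\!\left(\frac{a(z)\bigl(1-\chi_t(z)\bigr)}{\phi'(z)}\right)dz,
\]
which splits into three terms. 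Using $|\phi'(z)|\geq |z-z_0|$ and $|\phi''|\leq C$, the pieces carrying $a'(z)$ and $a(z)\phi''(z)$ are bounded by
\[
\frac{1}{|t|}\int_{|z-z_0|>|t|^{-1/2}}\left(\frac{|a'(z)|}{|z-z_0|}+\frac{|a(z)|}{|z-z_0|^2}\right)dz,
\]
which is precisely the second term in the statement. The remaining piece involves $\chi_t'$: it is supported on the annulus $|z-z_0|\in[|t|^{-1/2},2|t|^{-1/2}]$, where $|\chi_t'|\les |t|^{1/2}$ and $|\phi'|\gtrsim |t|^{-1/2}$; a direct computation then controls it by $\int_{|z-z_0|<2|t|^{-1/2}}|a(z)|\,dz$, again absorbed into the first term.

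The only real subtlety is justifying the integration by parts without generating spurious contributions from the zero of $\phi'$ at $z_0$. The smooth cutoff $\chi_t$ is tailored precisely to sidestep that issue while reproducing the sharp dyadic split at scale $|t|^{-1/2}$ with only constant loss.
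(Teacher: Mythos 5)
Your proof is correct. The paper does not actually prove this lemma---it is stated as a (slightly modified) borrowed result from the cited reference \cite{Sc2}---but your argument is the standard stationary-phase proof: the smooth cutoff at scale $|t|^{-1/2}$, the pointwise bound $|\phi'(z)|\geq|z-z_0|$ from $\phi'(z_0)=0$ and $\phi''\geq 1$, integration by parts on the far region, and control of the $\chi_t'$ term on the annulus are all sound, and the resulting bound (with radius $2|t|^{-1/2}$ in the near piece) implies the stated one since on the annulus $|t|^{-1}\,|a(z)|/|z-z_0|^2\gtrsim|a(z)|$.
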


Using Lemma~\ref{stat phase} with $\phi_\pm(z)=\sqrt{z^2+m^2}\mp \frac{z r}{t}$,  $r:=|x-y|$, we will prove the  following lemma which yields the desired bound for the integral in \eqref{eq:free2ndint} for $j=0$. Let $\widetilde\chi$ be a smooth cut-off for $[1,\infty)$ supported in $(1/2,\infty)$.

\begin{lemma}\label{lem:high stat phase}   If 
	$$
		|a(z)|\les 	\frac{z\chi(z) \widetilde \chi(zr)}{(1+zr)^{\f12}} ,
		\qquad |\partial_z a(z)|\les 
		\frac{\chi(z) \widetilde \chi(zr)}{(1+zr)^{\f12}},
	$$
	then we have the bound
	\begin{align*}
		\bigg|\int_0^\infty e^{-it \phi_\pm(z) } a(z)\, dz\bigg| \les \la t\ra^{-1},
	\end{align*}
where $\phi_\pm(z)=\sqrt{z^2+m^2}\mp \frac{z r}{t}$.
\end{lemma}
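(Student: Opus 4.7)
The plan is to split into $|t|\leq 1$ and $|t|\geq 1$. The former is immediate: since $|a|\lesssim z\chi(z)$ is bounded and compactly supported in $z$, we have $\int|a(z)|\,dz\lesssim 1\lesssim \la t\ra^{-1}$. For $|t|\geq 1$, compute
$$
\phi_\pm'(z)=\frac{z}{\sqrt{z^2+m^2}}\mp \frac{r}{t}, \qquad \phi_\pm''(z)=\frac{m^2}{(z^2+m^2)^{3/2}}.
$$
On the support of $\chi$, $\phi_\pm''$ is bounded above and below by positive $m$-dependent constants, so after a harmless rescaling absorbing those constants, Lemma~\ref{stat phase} applies with $\phi_\pm''\geq 1$. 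Since $z/\sqrt{z^2+m^2}$ is strictly increasing from $0$ to $1$, the $-$ phase has no critical point on $(0,\infty)$, whereas the $+$ phase has a unique critical point $z_0=\tfrac{mr/t}{\sqrt{1-(r/t)^2}}$ iff $r/t<1$. Moreover, the cutoffs $\chi(z)$ and $\widetilde\chi(zr)$ restrict the $z$-support of $a$ to a bounded set with $zr\gtrsim 1$; combined with $z_0\in\operatorname{supp}\chi$ (when applicable), this forces $r\gtrsim 1$ and, in the stationary case, $r$ and $t$ to be of comparable size.

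When no critical point of $\phi_\pm$ lies in $\operatorname{supp}\chi$ (the $-$ case; the $+$ case with $r/t\gtrsim 1$; the $+$ case with $r/t$ close to $1$ so that $z_0$ exits $\operatorname{supp}\chi$), I would apply integration by parts
$$
\int_0^\infty e^{-it\phi_\pm(z)}\,a(z)\,dz=\frac{1}{it}\int_0^\infty e^{-it\phi_\pm(z)}\,\partial_z\!\Bigl(\frac{a(z)}{\phi_\pm'(z)}\Bigr)dz,
$$
using the uniform lower bound on $|\phi_\pm'|$ available in each regime, together with $\int(|a|+|a'|)\,dz\lesssim 1$, to conclude the $|t|^{-1}$ decay.

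In the remaining case, $z_0\in\operatorname{supp}\chi$, so $r/t$ is bounded away from $0$ and $1$, $z_0\sim r/t$, and $r\sim t$; then I apply Lemma~\ref{stat phase}. On $\{|z-z_0|<|t|^{-1/2}\}$, $z\sim z_0$, so
$$
\int_{|z-z_0|<|t|^{-1/2}} |a(z)|\,dz \lesssim \frac{z_0\,|t|^{-1/2}}{\sqrt{1+z_0 r}} \lesssim |t|^{-1},
$$
since $z_0^2 t\sim r^2/t\sim z_0 r$. The tail piece $|t|^{-1}\int_{|z-z_0|>|t|^{-1/2}}(|a|/|z-z_0|^2+|a'|/|z-z_0|)\,dz$ is bounded analogously: using $|a|\lesssim z/\sqrt{1+zr}\lesssim z_0/\sqrt{z_0 r}$ and $|a'|\lesssim 1/\sqrt{1+zr}\lesssim 1/\sqrt{z_0 r}$ on $|z-z_0|\lesssim z_0$, combined with $z_0 r\sim t$, the logarithmic factor arising from the $1/|z-z_0|$ integration is absorbed. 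The main obstacle will be the case-by-case bookkeeping across the regimes of $(r,t)$; the crucial structural input is the factor $z$ in $|a(z)|$, which supplies the extra power of $z_0\sim r/t$ needed to convert the naive $|t|^{-1/2}$-width estimate from Lemma~\ref{stat phase} into the desired $|t|^{-1}$.
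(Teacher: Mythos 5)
There is a genuine gap in the stationary-phase case, stemming from a misreading of the geometry. You write that $z_0\in\operatorname{supp}\chi$ forces $r/t$ to be bounded away from $0$ and hence $r\sim t$. But in Lemma~\ref{lem:high stat phase} the cutoff $\chi$ is the low-energy ($j=0$) cutoff, supported near $z=0$; consequently $z_0=mr/\sqrt{t^2-r^2}\in\operatorname{supp}\chi$ requires $z_0$ small, i.e.\ $r\ll t$ (the paper takes $t>2r$ so that $z_0\approx r/t$). Thus $r/t$ ranges over $(0,\delta]$ with $\delta$ small, not over a compact subinterval of $(0,1)$. In particular $z_0$ can be as small as $\lesssim |t|^{-1/2}$ (equivalently $r\lesssim t^{1/2}$), and in that regime your bound $|a(z)|\lesssim z_0/\sqrt{1+z_0 r}$ on $\{|z-z_0|<|t|^{-1/2}\}$ fails, since $z$ need not be comparable to $z_0$ there. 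This subcase is handled separately in the paper: one bounds $\int_{|z-z_0|<t^{-1/2}}|a|\,dz\lesssim t^{-3/4}r^{-1/2}$ and observes that the support condition $zr\gtrsim 1$ with $z\lesssim t^{-1/2}$ forces $r\gtrsim t^{1/2}$, closing the estimate to $t^{-1}$. Your proof never confronts this branch, and the asserted relation ``$z_0 r\sim t$'' is false in general (one has $z_0 r\sim r^2/t$, which equals $t$ only when $r\sim t$), so the log absorption in the tail term also rests on the wrong identity, even though after replacing it by the correct $z_0r\sim(z_0\sqrt{t})^2$ it does work when $z_0\gtrsim t^{-1/2}$.

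A smaller but real issue: in the non-stationary regime you invoke a ``uniform lower bound on $|\phi_\pm'|$.'' For $\phi_-$ one only has $\phi_-'(z)\gtrsim z$, which degenerates as $z\to 0$; the integration by parts goes through, but only because the weights $|a|/|\phi_-'|^2+|a'|/|\phi_-'|\lesssim \widetilde\chi(zr)/(z^{3/2}r^{1/2})$ happen to be integrable on $z\gtrsim 1/r$. The phrase ``together with $\int(|a|+|a'|)\,dz\lesssim 1$'' is not the correct bookkeeping. To repair the proof you should (i) drop the assumption $r\sim t$, (ii) split the stationary analysis into $z_0\gtrsim t^{-1/2}$ and $z_0\lesssim t^{-1/2}$ as the paper does, and (iii) in the non-stationary case track the $1/z$ degeneracy of $\phi_-'$ explicitly.
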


\begin{proof}

	The integral in question is clearly bounded.
	We need only show that for large $t$, the integral 
	can be bounded by $|t|^{-1}$.  We assume $t>0$ and treat only the case of
	$\phi_+$, in which case the critical point occurs 
	in $[0,\infty)$.  The case of $t<0$ can be treated
	with the argument below by interchanging the phases
	$\phi_\pm$. Note  that the critical 
point of $\phi_{+}$ occurs at $z_0=  mr/\sqrt{t^2-r^2}$.
From this, we can assume that $t>2r$, say,  for $z_0$ to be in a small neighborhood of the support of $\chi(z)$. Thus, we have $z_0\approx r/t$.

	We employ Lemma~\ref{stat phase}. First, consider
	the integral
	\begin{align}\label{eqn:astat1}
		\int_{|z-z_0|<t^{-\f12}} |a(z)|\, dz\les \int_{|z-z_0|<t^{-\f12}} \frac{z^\f12 \widetilde \chi(zr)}{ r^{\f12}}\, dz.
	\end{align}
	We consider cases based on the size of $z_0$ compared
	to $t^{-\f12}$.  First, consider the case when $z_0\gtrsim t^{-\f12}$.  Then, we have $z\les z_0 \approx r/t$, and hence
	\begin{align*}
		\eqref{eqn:astat1}\les t^{-\f12} \frac{z_0^{\f12}}{r^{\f12}}\les  t^{-1}.
	\end{align*}
	In the second case, we have $z_0\les t^{-\f12}$, which
	yields $z\les t^{-\f12}$.  In
	this case, we see
	\begin{align*}
		\eqref{eqn:astat1} \les \int_0^{t^{-\f12}}
		\frac{z^{\f12} \widetilde \chi(zr)}{r^{\f12}}\, dz
		\les \frac{t^{-\f34}}{r^{\f12}}.
	\end{align*}
	We also  note that the integral is zero unless $r\gtrsim t^{1/2}$, which provides the desired bound of $t^{-1}$.
	
	We now proceed to bound the contribution of
	\begin{align}\label{eqn:astat2}
		\int_{|z-z_0|>t^{-\f12}} \frac{ |a(z)|}{|z-z_0|^2}\, dz\les \int_{|z-z_0|>t^{-\f12}} \frac{ z^\f12 \widetilde \chi(zr)}{r^\f12 |z-z_0|^2} \, dz.
	\end{align}
	We only need show that this integral is bounded.
	
	We first consider the case when $z_0\ll t^{-\f12}$, in
	which case we have $|z-z_0|\approx z$, and hence
	\begin{align*}
		\eqref{eqn:astat2} \les    \int_{\R} 
		\frac{\widetilde \chi(zr)}{z^{\f32}r^{\f12}}\, dz
		\les 1.
	\end{align*}
	
	In the second case, when $ t^{-\f12}\les z_0$, since $z_0 \approx r/t$, we have $r\gtrsim t^{\f12}$.   With the change of variable $s=z+z_0$, we have
	\begin{align*}
		\eqref{eqn:astat2}  \les   \frac{1}{r^{\f12}} \int_{|s|>t^{-\f12}}
		\frac{s^{\f12}+z_0^{\f12}}{s^2} \, ds 
		 \les r^{-\f12}t^{\f14}+r^{-\f12}t^{\f12}z_0^{\f12}
		\les 1.
	\end{align*}

	Finally, we turn to the contribution of
	\begin{align}\label{eqn:astat3}
		\int_{|z-z_0|>t^{-\f12}}\frac{|a'(z)|}{|z-z_0|}\, dz \les \int_{|z-z_0|>t^{-\f12}}\frac{ \widetilde \chi(zr)}{z^{\f12}r^{\f12}|z-z_0|} \, dz.
	\end{align} 
	If $z_0\ll t^{-\f12}$, we see that $|z-z_0|\approx z$
	and similar to the treatment for \eqref{eqn:astat2}
	in this case, we have
	\begin{align*}
		\eqref{eqn:astat3} \les \int_{\R} 
		\frac{\widetilde \chi(zr)}{z^{\f32}r^{\f12}}\, dz
		\les 1.
	\end{align*}
	
	If $z_0\gtrsim t^{-\f12}$, we have $r\gtrsim t^{1/2}$ as above. 
	We calculate
	\begin{align*}
		\eqref{eqn:astat3}   \les \frac{1}{r^{\f12}}\bigg[
		\int_{|z-z_0|>t^{-\f12}}\frac{dz}{|z-z_0|^{\f32}}
		+\int_{\R} \frac{\widetilde \chi(zr)}{z^{\f32}}\, dz
		\bigg]\les r^{-\f12}t^{\f14}+1 \les 1.
	\end{align*}

	For completeness, we note that in the case of the phase
	$\phi_-$, the critical point occurs outside of $[0,\infty)$, and we have $\phi_-'(z)=\frac{z}{\sqrt{z^2+m^2}}+\frac{r}{t}\gtrsim z$, we have
	\begin{align*}
		\bigg|\int_0^\infty e^{-it\phi_-(z)}a(z)\, dz		\bigg|&\les t^{-1}\bigg(\int_0^\infty \frac{|a(z)|}{|\phi_-'(z)|^2}+\frac{|a'(z)|}{|\phi_-'(z)|}\, dz	\bigg).
	\end{align*}
	Using the bounds for $a(z)$ and $a'(z)$, we may bound
	this by
	\begin{align*}
		t^{-1}\int_0^\infty \frac{\widetilde \chi(zr)}{z^{\f32}r^{\f12}} \, dz \les t^{-1}
	\end{align*}
	as desired. This also takes care of the case when the critical point occurs outside a neighborhood of the support of $\chi$.
\end{proof}

 The following lemma finishes the proof of Theorem~\ref{thm:free high} by establishing the required bound for the integral in \eqref{eq:free2ndint} when $j\geq 1$.

\begin{lemma}\label{lem:high stat phase2} Fix $j\in\N$, and let $\chi_j(z)$ be  a cut-off to
$z\approx 2^j$.  If 
	$$
		|a(z)|\les 	\frac{z\chi_j(z) \widetilde \chi(zr)}{(1+zr)^{\f12}} ,
		\qquad |\partial_z a(z)|\les 
		\frac{\chi_j(z) \widetilde \chi(zr)}{(1+zr)^{\f12}},
	$$
	then we have the bound
	\begin{align*}
		\bigg|\int_0^\infty e^{-it \phi_\pm(z) } a(z)\, dz\bigg| \les  \min(2^{2j}, 2^{\frac{3j}2}|t|^{-1/2}, 2^{2j} |t|^{-1}),
	\end{align*}
where $\phi_\pm(z)=\sqrt{z^2+m^2}\mp \frac{z r}{t}$.
\end{lemma}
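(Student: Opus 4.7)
The plan is to adapt the proof of Lemma~\ref{lem:high stat phase} to the dyadic scale $2^j$. Assume $t>0$ and focus on $\phi_+$; the phase $\phi_-$ has $\phi'_-(z)=z/\sqrt{z^2+m^2}+r/t>0$ with $|\phi'_-|\gtrsim 1$ on the support of $\chi_j$ (for $j\geq 1$), so it is handled by integration by parts alone (the case $t<0$ is symmetric with $\phi_\pm$ interchanged). The trivial bound $|a(z)|\leq z\chi_j(z)\lesssim 2^j$ combined with $|\mathrm{supp}(\chi_j)|\lesssim 2^j$ immediately yields $|\int|\lesssim 2^{2j}$. Furthermore, the cutoff $\widetilde\chi(zr)$ forces $zr\gtrsim 1$, so $r\gtrsim 2^{-j}$, and the amplitude estimates sharpen to $\|a\|_\infty\lesssim 2^{j/2}/r^{1/2}$ and $\int|a'|\,dz\lesssim 2^{j/2}/r^{1/2}$.

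For the decay bounds I would split according to whether the critical point $z_0=mr/\sqrt{t^2-r^2}$ of $\phi_+$ lies in $[2^{j-1},2^{j+1}]$. Since $z_0\approx 2^j$ forces $r/t$ within $O(m^2/2^{2j})$ of $1$, this critical regime requires $r\approx t$. Outside this regime (i.e., $r$ bounded away from $t$), we have $|\phi'_+(z)|\gtrsim 1$ on $\mathrm{supp}(\chi_j)$. Together with $|\phi''_+|\lesssim 2^{-3j}$, one integration by parts gives
\[
\Big|\int e^{-it\phi_+(z)}a(z)\,dz\Big|\lesssim t^{-1}\int\bigg(\frac{|a'|}{|\phi'_+|}+\frac{|a||\phi''_+|}{|\phi'_+|^2}\bigg)dz\lesssim 2^j/t.
\]
Since $2^j/t\leq\min(2^{3j/2}/t^{1/2},2^{2j}/t)$ for $t\gtrsim 2^{-j}$, and the trivial bound $2^{2j}$ is the smallest of the three in the range $t\lesssim 2^{-j}$, this regime is fully handled.

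In the remaining regime $r\approx t$, I prove the two decay bounds separately. Direct integration with the sharpened amplitude gives the $t^{-1/2}$ bound
\[
\Big|\int e^{-it\phi_+(z)}a(z)\,dz\Big|\leq\int|a|\,dz\lesssim 2^j\cdot 2^{j/2}/r^{1/2}=2^{3j/2}/r^{1/2}\lesssim 2^{3j/2}/t^{1/2}.
\]
For the $t^{-1}$ bound I observe that $|\phi''_+(z)|=m^2/(z^2+m^2)^{3/2}\gtrsim m^2/2^{3j}$ uniformly on $\mathrm{supp}(\chi_j)$ when $2^j\gtrsim m$, and apply Van der Corput's lemma to obtain
\[
\Big|\int e^{-it\phi_+(z)}a(z)\,dz\Big|\lesssim\Big(\frac{tm^2}{2^{3j}}\Big)^{-1/2}\!\!\big(\|a\|_\infty+\|a'\|_1\big)\lesssim\frac{2^{3j/2}}{m\,t^{1/2}}\cdot\frac{2^{j/2}}{r^{1/2}}\lesssim\frac{2^{2j}}{t}.
\]

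The main obstacles I expect are: (i) making the partition between the two $r$-regimes precise and uniform, since the critical window in $r$ has width $O(tm^2/2^{2j})$; (ii) handling the low dyadic range $2^j\lesssim m$, where $|\phi''_+|\gtrsim m^2/2^{3j}$ is replaced by $|\phi''_+|\gtrsim 1/m$ so the Van der Corput scaling changes; and (iii) keeping the boundary contributions of the integration by parts under control. All three should be addressable by a soft partition of unity in $r/t$, absorbing finitely many small values of $j$ (with $2^j\lesssim m$) into $m$-dependent constants, and using the fact that $\chi_j$ is compactly supported so boundary terms vanish.
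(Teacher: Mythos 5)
Your proposal is correct and structurally mirrors the paper's argument: trivial bound $2^{2j}$, split into the regime $r\approx t$ (where the stationary point may live and direct integration gives $2^{3j/2}|t|^{-1/2}$ while a second-derivative test gives $2^{2j}|t|^{-1}$) and the regime $r\ll t$ or $r\gg t$ (where $|\phi_\pm'|\gtrsim 1$ and one integration by parts gives $2^j|t|^{-1}$, which is dominated by the stated minimum for $t\gtrsim 2^{-j}$). The difference is purely in implementation of the stationary case: you apply the classical van der Corput lemma with the uniform lower bound $|\phi''_+|\gtrsim m^2/2^{3j}$ and the sharpened amplitude bound $\|a\|_\infty+\|a'\|_1\lesssim 2^{j/2}/r^{1/2}$, whereas the paper rescales $\rho=2^{-j}z$, $q=2^j r$ so that the rescaled phase satisfies $\widetilde\phi''\approx 1$ uniformly and then invokes their Lemma~\ref{stat phase} with $t2^{-j}$ in place of $t$. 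The rescaling is the paper's device for making the $j$-dependence transparent; your route reaches the same constants without it. One caution, which you partly flag yourself: your opening sentence frames the split as ``$z_0\in[2^{j-1},2^{j+1}]$ or not,'' but the relevant and correct split is on $r/t$ (say $r/t\in[1-\delta,1+\delta]$ or not); the condition $z_0\notin[2^{j-1},2^{j+1}]$ does \emph{not} imply $|\phi'_+|\gtrsim 1$ on $\operatorname{supp}\chi_j$ (e.g.\ $r/t=1-2^{-j}$ gives $z_0\ll 2^j$ yet $|\phi'_+|\approx m^2 2^{-j}$). Since van der Corput does not require the stationary point to lie in the support, working directly with the $r/t$ dichotomy (as you do from the second sentence onward) closes this gap; just make that the stated split. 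The concerns you list about the low range $2^j\lesssim m$ and boundary terms are correctly diagnosed as harmless.
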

\begin{proof} First note that the integral is bounded by $2^{2j}$ since the integrand is bounded by $z\chi_j(z)$.
We now restrict ourselves to the case $t>0$ and only consider $\phi_+$.  We also take $m=1$ without loss of generality.
Let $\rho=2^{-j} z$, $q=2^jr$. We rewrite the integral as
\be\label{eq:temp1}
	2^{2j}\int_0^\infty e^{-it 2^{-j} \widetilde \phi(\rho) } \widetilde a(\rho)\, d\rho,
\ee
where $\widetilde \phi(\rho)= 2^j (\sqrt{2^{2j}\rho^2+1}- \frac{\rho q}{t})$. Moreover,  $\widetilde a$ satisfies the bounds that $a$ satisfies with $j=1$ and $r=q$. 
Also note that $\frac{d^2}{d\rho^2}{\widetilde{\phi}} \approx 1$, since $\chi_1$ is supported away from $0$.

The critical point of $\widetilde\phi$ is $\rho_0=\frac{2^{-j}q}{\sqrt{t^2 2^{2j}-q^2}}$. Therefore, $\rho_0$ is in the support of $\chi_1(\rho)$ provided that 
$$
|t-q2^{-j}|\approx q2^{-3j},
$$   
which implies that $q\approx 2^j t$. This implies that
$$
		|\widetilde a(\rho)|\les 	\frac{\chi_1(\rho) \widetilde \chi(\rho q)}{ \sqrt{2^{j} t} },
		\qquad |\partial_\rho \widetilde a(\rho)|\les 
		\frac{\chi_1(\rho) \widetilde \chi(\rho q)}{ \sqrt{2^{j} t}}.
$$
Using the first inequality above in \eqref{eq:temp1} directly, we can bound \eqref{eq:temp1} by  $  2^{3j/2}t^{-1/2}$.
On the other hand, using these bounds in Lemma~\ref{stat phase} with $t2^{-j}$ instead of $t$. We bound \eqref{eq:temp1} by
$$
    		 2^{2j} \int_{|\rho-\rho_0|<\sqrt{2^j/t} } \frac{1}{ \sqrt{2^{j} t}}\, d\rho
    		+\frac{2^{3j}}{t}\int_{|\rho-\rho_0|>\sqrt{2^j/t}} \frac{1 }{ \sqrt{2^{j} t} }\bigg( \frac{1}{|\rho-\rho_0|^2}+
    		\frac{\chi_1(\rho) }{|\rho-\rho_0|}\bigg)\, d\rho \les \frac{2^{2j}}{t}.
$$

In the cases $q\ll 2^j t$ or $q\gg 2^j t$, we have $|\f{d}{d\rho}\widetilde\phi| \gtrsim 2^{2j}$. Therefore, an integration by parts implies that the integral is bounded by $2^j/t$. 
\end{proof}

\section{Perturbed resolvent expansions around the threshold energy $m$}\label{sec:m exp}
 
In this section, using Lemma~\ref{lem:R0exp}, we  develop expansions for the perturbed resolvents $\mR_V^\pm(\lambda)$ near the threshold $\lambda=m$ in  the case when the threshold is regular, and when there is an s-wave resonance at the threshold, see   Definition~\ref{resondef}. 
  
Since  the matrix 
$V:\mathbb R^2 \to \mathbb C^{2}$ is self-adjoint,
the spectral theorem allows us to write
$$
	V=B^*\left(\begin{array}{cc}
		\lambda_1 & 0 \\ 0 &\lambda_2
	\end{array}\right)B
$$
with $\lambda_j \in \mathbb R$.  We 
further write $\eta_j =|\lambda_j|^{\f12}$,
\begin{align*}
	V=B^*\left(\begin{array}{cc}
	\eta_1 & 0 \\ 0 & \eta_2
	\end{array}\right) U \left(\begin{array}{cc}
	\eta_1 & 0 \\ 0 & \eta_2
	\end{array}\right)B = v^*Uv,
\end{align*}	
	where
\begin{align}\label{eq:vabcd} 
	U=\left(\begin{array}{cc}
	\textrm{sign}(\lambda_1) & 0 \\ 0 & \textrm{sign}(\lambda_2)
	\end{array}
	\right),\,\,\,\text{ and }\,\,
 v=\left(\begin{array}{cc}
		a & b\\ c &d
	\end{array}\right):=\left(\begin{array}{cc}
	\eta_1 & 0 \\ 0 & \eta_2
	\end{array}\right)  B.
\end{align}
Note that the entries of $v$ are $\les \la x\ra^{-\beta/2}$, provided that the entries of $V$ are $\les \la x\ra^{-\beta}$. This representation of $V$ allows us to employ the
symmetric resolvent identity to write the perturbed
resolvent $\mR_V(\lambda)=(D_m+V-\lambda)^{-1}$ as (with $\lambda=\sqrt{m^2+z^2}$, $0<z\ll 1$)
\begin{align}\label{symmresid}
	\mR_V(\lambda)=\mR_0(\lambda)-\mR_0(\lambda)v^* (U+v\mR_0(\lambda)v^*)^{-1}v\mR_0(\lambda).
\end{align}
Our goal is to invert the operator
\be \label{eq:Mpm}
M^\pm (z)=U+v\mR_0^\pm \left(\sqrt{m^2+z^2}\right)v^*, \,\,\,\,0<z\ll 1.
\ee 
Recall that $\mR_0(\lambda)(x,y) =2m  g^\pm (z) M_{11}+\mG_0(x,y)
		+E_0^{\pm}(z)(x,y)$.
Therefore,
$$
M^\pm (z)=U+v\mG_0v^* + 2m  g^\pm (z) vM_{11}v^*+vE_0^{\pm}(z) v^*.
$$

Recalling \eqref{eq:vabcd}, for $f=(f_1,f_2)^T\in L^2 \times L^2$, we have 
 \begin{align*}
	vM_{11}v^*f(x)
	 =\left(\begin{array}{c}
		a(x)  \\ c(x) 
		\end{array}\right)  
		\int_{\R^2}\overline  a(y) f_1(y)+\overline{c}(y)f_2(y)\, dy.
\end{align*}
Thus, we arrive at
\begin{align}\label{vM11v}
	vM_{11}v^*=\|(a,c)\|_2^2 P,
\end{align}
where $P$ is the projection onto the vector $(a,c)^T$.  We also define the
operators $Q:=1-P$, $T:=U+v\mathcal G_0 v^*$, and let
$$
		\mathbbm g^{\pm}(z):=2m \|(a,c)\|_2^2 g^\pm(z).
$$
 We have 
\begin{lemma} \label{lem:M_exp}
	For $ 0<z\ll 1$, we have 
	\begin{align*}
		M^{\pm}(z)= \mathbbm g^\pm (z) P+ T+M_0^{\pm}(z),
	\end{align*}
where,\footnote{The Hilbert-Schmidt norm of an integral operator
$K$ with integral kernel $K(x,y)$ is defined by
$$
	\|K\|_{HS}^2= \int_{\R^4} |K(x,y)|^2\, dx\, dy.
$$} for any $\frac{1}{2}\leq k<2$,
	\begin{align*}
		\big\| \sup_{0<z\ll 1} z^{j-k} | \partial_z^j M_0^{\pm}(z)(x,y)|\big\|_{HS}\les 1,\,\,\,\,j=0,1,
	\end{align*}
	if $|v_{ij}(x)|\lesssim \langle x\rangle^{-\beta}$ for some $\beta>1+k$.
	
Moreover,
	\begin{align}\label{M0_defn}
		M_0^{\pm}(z)=  g_1^\pm (z) v\mathcal G_1v^* +  z^2 v\mathcal G_2v^*+M_1^{\pm}(z),
	\end{align}
	where, for any $2<\ell<4$,
	\begin{align*}
		\big\| \sup_{0<z\ll 1} z^{j-\ell} | \partial_z^j M_1^{\pm}(z)(x,y)|\big\|_{HS}\les 1,\,\,\,\,\,j=0,1,
	\end{align*}
	if $\beta>1+\ell$.
\end{lemma}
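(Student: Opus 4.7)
The decompositions are purely algebraic once Lemma~\ref{lem:R0exp} is plugged into \eqref{eq:Mpm}: write
\[ M^\pm(z)=U+v\bigl[2mg^\pm(z)M_{11}+\mG_0+E_0^\pm(z)\bigr]v^*, \]
use the identity \eqref{vM11v} to replace $vM_{11}v^*$ by $\|(a,c)\|_2^2 P$, and set $T:=U+v\mG_0 v^*$ and $M_0^\pm(z):=vE_0^\pm(z)v^*$. This yields the first claimed decomposition. The refinement \eqref{M0_defn} follows by substituting the further expansion $E_0^\pm=g_1^\pm\mG_1+z^2\mG_2+E_1^\pm$ from Lemma~\ref{lem:R0exp}, which identifies $M_1^\pm(z):=vE_1^\pm(z)v^*$.

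The substantive task is the Hilbert--Schmidt bound. The pointwise estimates in Lemma~\ref{lem:R0exp} give, for $j=0,1$,
\[ z^{j-k}|\partial_z^j M_0^\pm(z)(x,y)|\les |v(x)|\,|v(y)|\bigl(|x-y|^k+\log^-|x-y|\bigr), \]
uniformly in $z\in(0,1)$. Crucially, the majorant is independent of $z$, so the supremum over $z$ can be pulled directly through the $L^2(\R^2\times\R^2)$ norm. It remains to verify
\[ \int\!\!\int \la x\ra^{-2\beta}\la y\ra^{-2\beta}\bigl(|x-y|^k+\log^-|x-y|\bigr)^2\, dx\, dy<\infty \]
under $\beta>1+k$. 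I split the square via $2ab\le a^2+b^2$. For the polynomial piece $|x-y|^{2k}$, bound $|x-y|^{2k}\les\la x\ra^{2k}+\la y\ra^{2k}$; the resulting integrals factor into products of one-dimensional weighted integrals, each converging precisely when $2\beta-2k>2$, i.e. when $\beta>1+k$. For the singular piece $(\log^-|x-y|)^2$, note this is supported on $|x-y|<1$ and $\int_{|u|<1}(\log|u|)^2\,du<\infty$, so Fubini together with $\int\la x\ra^{-2\beta}\,dx<\infty$ (which needs only $\beta>1$) closes the estimate.

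The treatment of $M_1^\pm$ is identical in structure, using the $E_1^\pm$ estimate from Lemma~\ref{lem:R0exp} with exponent $\ell\in(2,4)$ in place of $k$; the same split now requires $\beta>1+\ell$. There is no genuine obstacle: the expansions of Lemma~\ref{lem:R0exp} are tailored so that the error terms decay in $z$ at exactly the rate needed to leave a $z$-independent majorant of the standard weighted-polynomial-plus-log form, and the weighted integrability threshold $\beta>1+k$ (resp.\ $\beta>1+\ell$) is forced by the polynomial growth of $|x-y|^k$ competing against the decay $\la x\ra^{-2\beta}\la y\ra^{-2\beta}$ provided by the potential.
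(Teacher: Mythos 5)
Your proposal matches the paper's proof in both structure and substance: identify $M_0^{\pm}=vE_0^{\pm}v^*$ and $M_1^{\pm}=vE_1^{\pm}v^*$ from \eqref{eq:Mpm}, \eqref{vM11v}, and Lemma~\ref{lem:R0exp}, then observe that the $z$-uniform pointwise majorant $|v(x)||v(y)|(|x-y|^{k}+\log^-|x-y|)$ is Hilbert--Schmidt for $\beta>1+k$. The paper states the Hilbert--Schmidt fact without verification, whereas you verify it by splitting the square and bounding $|x-y|^{2k}\les \la x\ra^{2k}+\la y\ra^{2k}$ — this is correct. One small wording slip: the resulting factored integrals are over $\R^2$ (not one-dimensional), and only the factor carrying the $2k$-power needs $2\beta-2k>2$ (the companion factor needs just $\beta>1$, which is implied since $k\geq\tfrac12$); the binding condition $\beta>1+k$ is nonetheless correctly identified.
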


\begin{proof} Note that by \eqref{eq:Mpm}, Lemma~\ref{lem:R0exp}, and the discussion above, we have
\be\label{M0M1}
M_0=vE_0v^*,\,\,\,M_1=vE_1v^*.
\ee
Therefore the statement for $j=0,1$ follows from the error bounds in Lemma~\ref{lem:R0exp}, and the fact that 
$ (|x-y|^{\ell }+ \log^-|x-y|) \la x\ra^{-\beta} \la y\ra^{-\beta}$ is a Hilbert-Schmidt kernel for $\beta>1+\ell$ and $\ell>-1$. 
\end{proof}

We employ the following terminology, following \cite{Sc2,eg2,eg3}

\begin{defin}
	We say an operator $T:L^2\times L^2(\R^2)\to L^2\times L^2(\R^2)$ with kernel
	$T(\cdot,\cdot)$ is absolutely bounded if the operator with kernel
	$|T(\cdot,\cdot)|$ is bounded from $L^2\times L^2(\R^2)$ to $L^2\times L^2(\R^2)$.
\end{defin}

We note that Hilbert-Schmidt and finite-rank operators are absolutely bounded operators.

As in the case of the Schr\"odinger operator, the invertibility of the leading term of $M$ depends on the regularity of the threshold energy.   Here we  give the definition of threshold  resonances.  Later, in Section~\ref{sec:resonanceclass}, we study the classification of these resonances in detail.

\begin{defin}\label{resondef}\begin{enumerate}
\item Let $Q=1-P$. We  say that $\lambda=m$ is a regular point of the spectrum of
	$H=D_m+V$ provided that $QTQ=Q(U+v\mG_0v^*)Q$ is invertible on $Q(L^2\times L^2)$.
	If $QTQ$ is invertible, we denote $D_0:=(QTQ)^{-1}$ as an operator
	on $Q(L^2\times L^2)$.

\item Assume that $m$ is not a regular point of the spectrum. Let $S_1$ be the Riesz projection
onto the kernel of $QTQ$ as an operator on  $Q(L^2\times L^2)$.
Then $QTQ+S_1$ is invertible on  $Q(L^2\times L^2)$.  Accordingly, with a slight abuse of notation we redefine $D_0=(QTQ+S_1)^{-1}$ as an operator
on  $Q(L^2\times L^2)$.
We say there is a resonance of the first kind at $m$ if the operator $T_1:= S_1TPTS_1$ is invertible on
$S_1(L^2\times L^2)$.

\item We say there is a resonance of the second kind at  $m$  if $T_1$ is not invertible on
$S_1(L^2\times L^2)$ but
$T_2:=S_2v\mG_1v^*S_2$ is invertible
on $S_2(L^2\times L^2)$, where $S_2$ is the Riesz projection onto the kernel of $T_1$. Recall the definition of
$\mG_1$ and $\mG_2$ in \eqref{G1 def} and \eqref{G2 def}.

\item Finally, if $T_2$ is not invertible on $S_2(L^2\times L^2)$, we say there is a resonance of the third kind at  $m$.
We note that in this case the operator $T_3:=S_3v\mG_2v^*S_3$ is always invertible on $S_3(L^2\times L^2)$, where $S_3$ is the Riesz
projection onto the kernel of $T_2$ (see Lemma~\ref{G2 invert} below). 

\end{enumerate}

\end{defin}

\begin{rmk} \label{rmk:resonances} i) Since $S_1\leq Q$, for any $\phi \in S_1$, $P\phi=0$, i.e.
$$M_{11}v^*\phi=0.$$
ii) Note that $v\mathcal{G}_0v^*$ is compact and self-adjoint. Hence, $QTT$ is a compact perturbation of $QUQ$ and it is self-adjoint. Also, the spectrum of $QUQ$ is in $\{-1,1\}$. Hence, zero is the isolated point of the spectrum of $QTQ$ and $dim(Ker_{QTQ})$ is finite. Thus $S_1$ is a finite rank projection.\\
iii) As in the case of Schr\"odinger operator in $\R^2$ (see e.g. \cite{JN}),   the projections
$S_1-S_2$, $S_2-S_3$ and $S_3$  correspond  to   s-wave  resonances,  p-wave  resonances,
and  eigenspace at $m$ respectively. In particular, resonance of the first kind means that there is only an s-wave resonance at $m$.
Resonance of the second kind means that there is a p-wave resonance, and there may or may not be an s-wave resonance.
Finally, resonance of the third kind means that $m$ is an eigenvalue, and there may or may not be s-wave and p-wave resonances.
We characterize these projections in Section~\ref{sec:resonanceclass}. We will also prove, see Remark~\ref{rmk:ranks}, that the rank of $S_1-S_2$ is at most $1$ and the rank of $S_2-S_3$ is at most $2$.  \\
iv) Since $QTQ$ is self-adjoint, $S_1$ is the orthogonal projection onto the kernel of $QTQ$, and we have
(with $D_0=(QTQ+S_1)^{-1}$) $$S_1D_0=D_0S_1=S_1.$$
This statement also valid for $S_2$ and $(T_1+S_2)^{-1}$, and for $S_3$ and $(T_2+S_3)^{-1}$.\\
v) The operator $QD_0Q$ is absolutely bounded in $L^2\times L^2$, see Lemma~\ref{lem:QDQ abs} below. %This was proved for the Schr\"odinger's operator in Lemma~8 of \cite{Sc2} in the case $S_1=0$.
\\ 
vi) The operators with kernel $v\mG_k v^*$ are Hilbert-Schmidt operators on $L^2\times L^2$ if $|v_{ij}(x)|\les \langle x\rangle^{-\beta}$
for $\beta>\frac{3}{2}$ if $k=1$ and $\beta>3$ for $k=2,3$. However, $v\mG_0v^*$ is not Hilbert-Schmidt because of the local singularity of size $|x-y|^{-1}$. 
\end{rmk}

We can now use the expansions for $M^{-1}$ from the papers \cite{Sc2,eg2,eg3}  since $M$ has the same form with the same error bounds, and with analogous definitions for $S_j$. We include these expansions without proof.  
 
\begin{lemma}\label{lem:Minverseregular} Assume that $m$ is a regular point of the spectrum of $H$. Also assume that $|v_{ij}(x)|\les \la x\ra^{-\frac32-}$. Then 
$$(M^\pm(z))^{-1}=h^\pm(z)^{-1} S +QD_0Q+E^\pm(z)$$
where 
$$S=\left[\begin{array}{cc}
P& -PTQD_0Q\\ -QD_0QTP &QD_0QTPTQD_0Q
\end{array} \right],$$ 
 $h^\pm(z)=\mathbbm g^\pm(z)+\,$trace$\,(PTP-PTQD_0QTP)$,
 $S$ is a self-adjoint, finite rank operator, and
\begin{align*}
		\big\| \sup_{0<z\ll 1} z^{j-1/2} | \partial_z^j E^{\pm}(z)(x,y)|\big\|_{HS}\les 1,\,\,\,\,j=0,1,
	\end{align*}
\end{lemma}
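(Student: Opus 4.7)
The approach is the standard Feshbach/Schur complement inversion with respect to the orthogonal decomposition $L^2\times L^2 = P(L^2\times L^2)\oplus Q(L^2\times L^2)$, making essential use of the fact that $P$ is a rank-one projection with $\operatorname{tr} P=1$.

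First, I would decompose $M^\pm(z)$ into blocks. Using Lemma~\ref{lem:M_exp} and $QPQ=0$, $PP=P$, we get
\begin{align*}
PM^\pm(z)P &= \mathbbm g^\pm(z)P + PTP + PM_0^\pm(z)P,\\
QM^\pm(z)Q &= QTQ + QM_0^\pm(z)Q,\\
PM^\pm(z)Q &= PTQ + PM_0^\pm(z)Q,\qquad QM^\pm(z)P = QTP + QM_0^\pm(z)P.
\end{align*}
Since $QTQ$ is invertible on $Q(L^2\times L^2)$ with inverse $D_0$, and $\|QM_0^\pm(z)Q\|\to 0$ as $z\to 0$ by Lemma~\ref{lem:M_exp}, the $Q$-block is invertible by a Neumann series: $(QM^\pm(z)Q)^{-1} = QD_0Q + \widetilde E^\pm(z)$, where $\widetilde E^\pm(z)$ inherits the Hilbert-Schmidt $z^{k-j}$ error bounds from $M_0^\pm$ and is absolutely bounded (since $QD_0Q$ is, by Remark~\ref{rmk:resonances}(v)).

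Next, I would compute the Schur complement on the $P$-block,
\[
\Sigma^\pm(z) := PM^\pm(z)P - PM^\pm(z)Q\,(QM^\pm(z)Q)^{-1}\,QM^\pm(z)P.
\]
Because $P$ has rank one with trace one, for any bounded operator $A$ one has $PAP = \operatorname{tr}(PA)\,P$. Applying this with $A=T$ and $A=TQD_0QT$, the leading part of $\Sigma^\pm$ becomes the scalar
\[
h^\pm(z) = \mathbbm g^\pm(z) + \operatorname{tr}(PTP - PTQD_0QTP)
\]
times $P$, up to errors coming from $M_0^\pm$ and from $\widetilde E^\pm$. Since $|\mathbbm g^\pm(z)|\gtrsim |\log z|\to\infty$, the operator $\Sigma^\pm(z)$ restricted to $P(L^2\times L^2)$ is invertible for small $z$, with inverse $h^\pm(z)^{-1}P + \widetilde F^\pm(z)$, where $\widetilde F^\pm(z)$ is a rank-one operator absorbing the error, controlled by the Hilbert-Schmidt bounds on $M_0^\pm$ (the gain comes from the $|\log z|$ decay of $h^\pm(z)^{-1}$ combined with the $z^k$ smallness of $M_0^\pm$, which yields the $z^{1/2-}$ error).

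Then I would assemble the blocks via the Schur formula
\[
(M^\pm)^{-1} = \begin{pmatrix} \Sigma_\pm^{-1} & -\Sigma_\pm^{-1}\,PM^\pm Q\,(QM^\pm Q)^{-1}\\ -(QM^\pm Q)^{-1}QM^\pm P\,\Sigma_\pm^{-1} & (QM^\pm Q)^{-1} + (QM^\pm Q)^{-1}QM^\pm P\,\Sigma_\pm^{-1}\,PM^\pm Q\,(QM^\pm Q)^{-1}\end{pmatrix}.
\]
Keeping only the terms with $h^\pm(z)^{-1}$ (using $PM^\pm Q = PTQ + O_{HS}(z^k)$ and $(QM^\pm Q)^{-1} = QD_0Q + O_{HS}(z^k)$) produces exactly the matrix $h^\pm(z)^{-1}S$ as claimed, while the remainder $QD_0Q$ comes from the leading term in the $QQ$-block. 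Self-adjointness of $S$ follows from $T=T^*$, $P=P^*$ and $D_0=D_0^*$, and its finite rank follows from $P$ being rank-one (each nonzero block has $P$ on one side).

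The main obstacle is bookkeeping the error $E^\pm(z)$ and verifying the $z^{-1/2+j}$ derivative bound for $j=0,1$. The strategy is: every product or composition encountered is of the form (absolutely bounded operator)$\,\cdot\,$(term with Hilbert-Schmidt estimate from Lemma~\ref{lem:M_exp}), multiplied possibly by $h^\pm(z)^{-1} = O(1/|\log z|)$; taking $k=1/2+$ in Lemma~\ref{lem:M_exp} then gives the stated bound, which is why the decay hypothesis $\beta>3/2$ on the entries of $v$ is sufficient. The derivative bound follows from the Leibniz rule once one checks that $\partial_z h^\pm(z) = \partial_z \mathbbm g^\pm(z) = O(z^{-1})$ and that this is compensated by the $z$-factor gained from $M_0^\pm$'s derivative bound.
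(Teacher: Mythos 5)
Your argument is correct, and it follows the same route as the proofs the paper is implicitly relying on. Indeed, the paper does not prove this lemma at all: it states (just before the lemma) that ``We can now use the expansions for $M^{-1}$ from the papers \cite{Sc2,eg2,eg3} since $M$ has the same form with the same error bounds, and with analogous definitions for $S_j$. We include these expansions without proof.'' Those cited proofs are exactly the Feshbach/Schur-complement (equivalently, Jensen--Nenciu) inversion with respect to $P\oplus Q$: invert the $Q$-block by a Neumann series around $QTQ$ (this is where the absolute boundedness of $QD_0Q$ from Remark~\ref{rmk:resonances}(v) enters), reduce to the rank-one $P$-block via the Schur complement, and use $PAP=\mathrm{tr}(PA)P$ to identify the scalar $h^\pm(z)$. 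Your reconstruction reproduces this, and your error bookkeeping is sound: taking $k=\tfrac12$ in Lemma~\ref{lem:M_exp} is exactly what the hypothesis $|v_{ij}|\lesssim\langle x\rangle^{-3/2-}$ supports, every error term is a composition of a fixed absolutely bounded operator with a kernel satisfying the sup-inside-HS bound (so the bound is preserved under composition), and the factors of $h^\pm(z)^{-1}=O(1/|\log z|)$ only help. Two tiny points: the bound you get is $z^{1/2}$, not $z^{1/2-}$, since Lemma~\ref{lem:M_exp} allows $k=\tfrac12$ exactly; and for the derivative bound the decisive cancellation is $\partial_z h^\pm=O(z^{-1})$ against the $z^{1/2}$ smallness of $M_0^\pm$ (and of the rank-one scalar $f^\pm$ in the Schur complement), exactly as you say. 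So the proposal is a complete, self-contained version of the argument the paper defers to its references.
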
 
\begin{lemma}\label{lem:Minversefirstkind} Assume that there is a resonance at $m$ of the first kind. Also assume that $|v_{ij}(x)|\les \langle x\rangle^{-\frac32-}$. Then 
\begin{multline*}
		M^{\pm}(z)^{-1} =-h_{\pm}(z)S_1D_1S_1-SS_1D_1S_1-S_1D_1S_1S\\
		 -h_{\pm}(z)^{-1}SS_1D_1S_1S+
		h_{\pm}(z)^{-1} S+QD_0Q +E^\pm(z),
\end{multline*}
 Here  $E^\pm(z)$, $S$, and  $h^\pm(z)$  are as in the previous lemma  with $D_0=Q(T+S_1)^{-1}Q$, and $D_1=T_1^{-1}=(S_1TPTS_1)^{-1}$.  
\end{lemma}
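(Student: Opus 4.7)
The proof follows the Feshbach/Schur-complement inversion strategy used for the scalar Schr\"odinger analog in \cite{Sc2, eg2, eg3}, applied now to the matrix-valued operator $M^{\pm}(z)$ from Lemma~\ref{lem:M_exp}. The plan is to split $L^2\times L^2 = S_1(L^2\times L^2) \oplus S_1^\perp(L^2\times L^2)$ and treat $M^{\pm}(z)$ as a $2\times 2$ block operator with diagonal blocks $A := S_1 M^{\pm}(z) S_1$ and $D := S_1^\perp M^{\pm}(z) S_1^\perp$, and off-diagonal blocks $B := S_1 M^{\pm}(z) S_1^\perp$, $C := S_1^\perp M^{\pm}(z) S_1$. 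The key algebraic simplifications are $S_1 P = 0$ (since $S_1 \leq Q$), $S_1 T S_1 = S_1 QTQ S_1 = 0$ (by the definition of $S_1$ as the kernel of $QTQ$ on $Q(L^2)$), and $S_1 T Q = 0$ (obtained by taking adjoints and noting $Q T S_1 = QTQ S_1 + QTP S_1 = 0$). These imply $A = S_1 M_0^{\pm}(z) S_1$, which is small in $z$ by Lemma~\ref{lem:M_exp}, and $S_1 T = S_1 T P$.

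The first step is to invert $D$. On $S_1^\perp(L^2\times L^2)$, the operator $QTQ+S_1$ is invertible with inverse $D_0$, so the restriction of $M^{\pm}(z)$ to $S_1^\perp$ matches the regular-case template, and Lemma~\ref{lem:Minverseregular} applied there yields $D^{-1} = h^{\pm}(z)^{-1} S + QD_0Q + E^{\pm}(z)$. The Schur-complement formula then gives the top-left block of $M^{\pm}(z)^{-1}$ as $(A - BD^{-1}C)^{-1}$ on $S_1(L^2\times L^2)$. Computing $BD^{-1}C$ using the $h^{\pm}(z)^{-1}S$ part of $D^{-1}$: only the $P$-component of $S$ survives because $S_1 TQ = 0$ kills any $QD_0Q$-factor inside $S$, leaving $S_1 T S T S_1 = S_1 TPTS_1 = T_1$. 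Hence $A - BD^{-1}C = -h^{\pm}(z)^{-1} T_1 + O(1)$; inverting via the first-kind hypothesis $T_1^{-1} = D_1$ produces the leading term $-h^{\pm}(z) S_1 D_1 S_1$.

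The remaining blocks assemble the other terms. The off-diagonal blocks $-(A-BD^{-1}C)^{-1}BD^{-1}$ and $-D^{-1}C(A-BD^{-1}C)^{-1}$ yield $-S_1 D_1 S_1 S$ and $-S S_1 D_1 S_1$ (using $S_1 T = S_1 TP$, $D_1 = S_1 D_1 S_1$, and $QD_0 Q S_1 = S_1$, which follows from $D_0 S_1 = S_1$ by self-adjointness of $QTQ+S_1$). The correction $D^{-1}C(A-BD^{-1}C)^{-1}BD^{-1}$ to the bottom-right block contributes $-h^{\pm}(z)^{-1} SS_1 D_1 S_1 S$, and the leading piece of the bottom-right block is $D^{-1}$ itself, yielding $h^{\pm}(z)^{-1}S + QD_0 Q$. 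All subleading contributions absorb into $E^{\pm}(z)$ via the Hilbert-Schmidt bounds of Lemma~\ref{lem:M_exp} under the decay $|v_{ij}|\lesssim \la x\ra^{-3/2-}$. The main obstacle will be the algebraic bookkeeping: correctly tracking the $h^{\pm}(z)$-scaling of each contribution and verifying that the various $S_1$, $P$, and $Q$ factors combine into the stated form. The matrix-valued nature of $v$ and $U$ introduces only notational overhead, since all relevant algebraic identities carry over verbatim from the scalar Schr\"odinger case.
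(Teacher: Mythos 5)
Your proposal is correct, and it reconstructs the argument the paper itself declines to write out (the paper states ``We include these expansions without proof'' and defers to \cite{Sc2,eg2,eg3}). The Feshbach/Schur-complement inversion you outline, resting on the identities $S_1 P=0$, $S_1 T S_1 = 0$, and $S_1 T Q = (QTS_1)^*=0$, is precisely the Jensen--Nenciu-style mechanism used in those references; your one-step split $L^2\times L^2 = S_1\oplus S_1^\perp$ is a mild repackaging of the usual two-step decomposition (first $P\oplus Q$, then $S_1\oplus(Q-S_1)$ inside $Q$), but the algebra -- in particular $PSP=P$, $S_1TPS=-S_1S$, and the resulting blocks $-h S_1D_1S_1$, $-S_1D_1S_1S$, $-SS_1D_1S_1$, $-h^{-1}SS_1D_1S_1S$ -- lands on the same formula.
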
 
\begin{rmk}\label{rmk:LAP}
One can also obtain analogous espansions\footnote{These expansions would require more decay from the potential then we have in Assumption~\ref{assumption}.}  in the cases when there  is a resonance of the second or third kind as in the Schr\"odinger equation, \cite{eg2}. We chose not to state these expansions explicitly since we are not considering dispersive estimates in these cases. By substituting the expansions for  $M^{\pm}(z)^{-1}$ in Lemmas~\ref{lem:Minverseregular} and \ref{lem:Minversefirstkind} into \eqref{symmresid}, we obtain expansions for the resolvent  showing that $ (\lambda-m) \mR_V(\lambda)$ is uniformly bounded between weighted $L^2$ spaces in a neighborhood of $m$.  This implies that  there are no eigenvalues in a neighborhood of $m$. In particular, there are only finitely many eigenvalues in the spectral gap $(-m,m)$.  It also implies a limiting absorption principle bound around the threshold.
\end{rmk}

\section{Low energy dispersive estimates} \label{sec:low}

In this section we study the low-energy part of the
perturbed Dirac evolution.    For technical reasons, which we detail below, we consider the evolution as an operator from $H^1 $ to $BMO $.

\begin{theorem}\label{thm:low energy}

	Under  Assumption~\ref{assumption} part i), with $\chi$ a smooth cut-off to a sufficiently small neighborhood of the threshold energy
	$\lambda=m$.
	We have the dispersive bound 
	\begin{align*}
		\| e^{-itH}P_{ac}(H)\chi(H) f\|_{BMO }\les
		\frac{1}{\la t\ra} \|f\|_{H^1 }.
	\end{align*}
	This bound holds if $\lambda=m$ is regular or if there is
	a resonance of the first kind.

\end{theorem}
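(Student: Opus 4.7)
The plan is to apply Stone's formula together with the symmetric resolvent identity and the threshold expansions of Section~\ref{sec:m exp}, then reduce matters to one-dimensional oscillatory integrals in the spectral variable. Restricting to the upper branch of the spectrum (the lower branch is analogous), we change variables $\lambda=\sqrt{m^2+z^2}$ in
\begin{align*}
e^{-itH}P_{ac}(H)\chi(H)f = \frac{1}{2\pi}\int_m^\infty e^{-it\lambda}\chi(\lambda)[\mR_V^+-\mR_V^-](\lambda)f\,d\lambda
\end{align*}
and insert the symmetric resolvent identity \eqref{symmresid}. The free-resolvent piece $\mR_0^+-\mR_0^-$ produces the localized free Dirac evolution, which is already controlled by Theorem~\ref{thm:free high} with decay $\langle t\rangle^{-1}$. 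What remains is a sum of Born corrections of the form $\mR_0^\pm v^*(M^\pm(z))^{-1}v\mR_0^\pm$.

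Into these I substitute the free-resolvent expansion $\mR_0^\pm = 2mg^\pm(z)M_{11}+\mG_0+E_0^\pm(z)$ from Lemma~\ref{lem:R0exp} and the inverse expansion for $(M^\pm(z))^{-1}$ of Lemma~\ref{lem:Minverseregular} in the regular case, or Lemma~\ref{lem:Minversefirstkind} in the resonant case. The dominant singular factor is $h^\pm(z)^{-1}S$ in the regular case, and additionally $-h^\pm(z)S_1D_1S_1$ (together with the cross terms with $S$) when an s-wave resonance is present. A useful structural observation is that $S_1\le Q$, hence $PS_1=S_1P=0$; this prevents the leading logarithm $g^\pm(z)$ appearing in the $2mg^\pm(z)M_{11}$ part of $\mR_0^\pm$ from coupling against $S_1$ to produce an uncontrolled $g^\pm(z)^2$ factor in the resonant case. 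After expanding products, each contribution becomes a spatial kernel sandwiched around a scalar oscillatory integral
\begin{align*}
I(t,x,y)=\int_0^\infty e^{-it\sqrt{m^2+z^2}}\chi(z)\frac{z}{\sqrt{m^2+z^2}}K^{\pm}(z,x,y)\,dz,
\end{align*}
where $K^\pm$ is built from $g^\pm(z)$, $g_1^\pm(z)$, $h^\pm(z)^{\pm 1}$, powers of $z$, absolutely bounded middle factors ($S$, $QD_0Q$, $S_1D_1S_1$), kernels of $\mG_0,\mG_1,\mG_2$, and the error operators $E_0^\pm$, $E_1^\pm$, $E^\pm$.

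For $|t|\le 1$ the desired bound is immediate from the uniform kernel estimates in Lemma~\ref{lem:R0exp} and Lemma~\ref{lem:M_exp}. For $|t|\ge 1$ I integrate by parts once in $z$ using $e^{-it\sqrt{m^2+z^2}}=\frac{\sqrt{m^2+z^2}}{-itz}\partial_z e^{-it\sqrt{m^2+z^2}}$, which cancels the factor $z/\sqrt{m^2+z^2}$ in the measure; the boundary terms vanish thanks to the $\chi(z)$ cutoff and the additional $z$-factor at the origin. The worst $z$-singularity that appears after one derivative is of the form $z^{-1}(\log z)^{-2}$, coming from $\partial_z h^\pm(z)^{-1}$, and all other derivatives (of $g^\pm$, $g_1^\pm$, $z^2$, and the $\partial_z E^\pm$, $\partial_z E_0^\pm$, $\partial_z E_1^\pm$ bounds from Lemmas~\ref{lem:R0exp} and \ref{lem:M_exp}) are integrable against the smooth cutoff. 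This yields $|I(t,x,y)|\les |t|^{-1}\mathcal K(x,y)$, where $\mathcal K$ is a sum of $\log|x-y|$, bounded kernels, and polynomially weighted pieces.

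The main obstacle, and the reason the theorem is stated in $H^1\to BMO$ rather than $L^1\to L^\infty$, is that the surviving spatial kernels---in particular those inheriting $\mG_0(x,y)\sim \log|x-y|$---fail to act from $L^1$ to $L^\infty$, because $\log|\cdot|$ is unbounded. This is the standard two-dimensional phenomenon. To complete the proof I follow the scheme established for two-dimensional Schr\"odinger in \cite{Sc2,eg2,eg3,eg4} and the $H^1$--$BMO$ adaptation in \cite{DF1,beceanu}: each spatial operator is decomposed as a bounded piece (mapping $L^1\to L^\infty\hookrightarrow BMO$) plus a finite-rank correction of the form $\log|x-y|\,\phi(x)\psi(y)$ or $\phi(x)\log|x-\cdot|\psi$, with $\phi,\psi$ polynomially decaying in $L^2$; the correction maps $H^1\to BMO$ because $\log|\cdot|\in BMO$ and the pairing $\langle f,\psi\rangle$ makes sense for $f\in H^1$ and $\psi\in L^2\cap L^\infty$. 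Since our resolvent expansions have the same structural form as in the Schr\"odinger case---the matrix-valued potential, the $-i\alpha\cdot\nabla$ factor in $\mG_k$, and the $M_{11}$ projector only modify the absolutely bounded middle factors without altering the size or oscillation structure of the spectral integrand---this $H^1\to BMO$ argument carries over, and combined with the spectral decay $|t|^{-1}$ produces the stated bound $\langle t\rangle^{-1}\|f\|_{H^1}$ term by term.
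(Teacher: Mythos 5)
Your overall plan (Stone's formula, symmetric resolvent identity, substituting the expansions of Lemmas~\ref{lem:R0exp}, \ref{lem:Minverseregular}, \ref{lem:Minversefirstkind}, exploiting $PS_1=S_1P=0$, and reducing to one-dimensional oscillatory integrals in $z$) matches the paper's. However you misidentify the technical obstacle that forces the $H^1\to BMO$ formulation, and your proposed fix would not handle it. You attribute the trouble to the surviving $\log|x-y|$ kernels, and propose decomposing each spatial operator into a bounded part plus a finite-rank $\log|x-y|\,\phi(x)\psi(y)$ correction. But the dominant singular piece in the outer free resolvents is not the logarithm; it is the derivative term
\[
R_1 \;=\; \frac{i}{2\pi}\,\frac{\alpha\cdot(x-y)}{|x-y|^2},
\]
coming from $-i\alpha\cdot\nabla$ hitting $\log(z|x-y|)$ in \eqref{def:mG0}. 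This is a genuine homogeneous fractional integral of order $1$ on $\R^2$, not a finite-rank or logarithmic correction, and it fails to map $L^1\to L^2$ (and a fortiori $L^1\to L^\infty$). The paper's way around this is Lemma~\ref{lem:R1h1j}, which rests on the theorem of Ding and Lu \cite{dinglu} that such homogeneous fractional integrals are bounded from $H^1$ to $L^2$: applied to $H^1$ atoms $f,g$, one produces $L^2$ functions $g_1,g_2$ so that $vR_1 g = g_1 + g_2\,\widetilde O_1(z)$, and these $L^2$ outputs can then be paired against the absolutely bounded middle operators $S$, $QD_0Q$, $S_1D_1S_1$ and the Hilbert--Schmidt errors $E^\pm$. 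Without this mechanism, your decomposition gives you no purchase on $R_1$. The $\log$ kernel of $\mG_0$ does also require the $H^1$/$BMO$ duality (used in the proof of \eqref{eq:R1h13}), but it is the subordinate difficulty, not the controlling one.

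There is a second gap in your time-decay argument. You assert that one integration by parts in $z$ using $e^{-it\sqrt{m^2+z^2}}=\frac{\sqrt{m^2+z^2}}{-itz}\partial_z e^{-it\sqrt{m^2+z^2}}$ yields $|I(t,x,y)|\lesssim |t|^{-1}\mathcal K(x,y)$ with $\mathcal K$ a sum of $\log|x-y|$, bounded, and polynomially weighted kernels. This is only valid for the low-frequency pieces $\mR_L$, where the residual dependence on $|x-y|$ is tame. For the high-frequency parts $\mR_H^\pm(z)(x,y)=e^{\pm iz|x-y|}\widetilde\omega_\pm(z(x-y))$, a $z$-derivative pulls down a factor $|x-y|$ from the phase, so naive integration by parts loses spatial control. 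The paper handles these by absorbing $e^{\pm iz|x-y|}$ into the phase $\phi_\pm(z)=\sqrt{z^2+m^2}\mp zr/t$ and applying the stationary phase estimate of Lemma~\ref{lem:high stat phase}, which produces $\langle t\rangle^{-1}$ uniformly in $r=|x-y|$ using the $(1+zr)^{-1/2}$ decay of $\widetilde\omega_\pm$. You gesture at following \cite{Sc2,eg2}, which does contain such an argument, but your explicit $z$-integral estimate as written would fail on the $\mR_H$ contributions (the terms $\Gamma_2$, $\Gamma_3$ in \eqref{eq:gammaA}, \eqref{eq:gammaS}, \eqref{eq:gammaEr}, \eqref{eq:gammaswave}).
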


As usual, we prove this bound by considering the Stone's formula, \eqref{Stone}.  
In the case there is a resonance at $m$ of the first kind, using Lemma~\ref{lem:Minversefirstkind}  in   \eqref{symmresid}, we have 
\begin{multline}\label{eq:resolvem}
		\mR_V(\lambda)=\mR_0(\lambda)\\-\mR_0(\lambda)v^*  \big[ -h_{\pm}(z)S_1D_1S_1 +A  
		 +h_{\pm}(z)^{-1} (S-SS_1D_1S_1S)   +E^\pm(z) \big]v\mR_0(\lambda),
\end{multline}
where $A:=QD_0Q-SS_1D_1S_1-S_1D_1S_1S$.
Since this expansion contains the terms arising in the  regular case, it suffices to prove the dispersive estimate in the case of a resonance of the first kind.  We bound the contribution of each operator in this expansion in a series of technical propositions.  The first term containing only a single free resolvent $\mR_0$ is controlled by the bound in Theorem~\ref{thm:free high}, specifically \eqref{eqn:freeDirac low}.

To control the contributions to the Stone's formula, using \eqref{eq:dr1}, \eqref{eq:freschroresolv}, and \eqref{R0large}, we write the outermost resolvents when $0<z \ll 1$ as
\begin{align}
\mR_0^\pm(\lambda)(x,y) 
 = &[-i\alpha \cdot \nabla + e(z)]R_0^\pm(z^2)(x,y) + 2mI_1 R_0^\pm(z^2)(x,y) \nn \\
 = &-\frac1{2\pi}\chi(z|x-y|) [-i\alpha \cdot \nabla ] \log(z|x-y|)\nn \\
 &+\chi(z|x-y|)[-i\alpha \cdot \nabla  ] \left(R_0^\pm(z^2)(x,y)+\frac1{2\pi}\log(z|x-y|)\right)\nn\\
 &+  \chi(z|x-y|)  e(z)  R_0^\pm(z^2)(x,y) \nn\\ & + \widetilde\chi(z|x-y|) e^{\pm i z|x-y|}  \omega_1^\pm(z(x-y))+ 2mI_1 R_0^\pm(z^2)(x,y)\nn\\
 =:& R_1 +R_2^\pm+R_3^\pm+R_4^\pm+R_5^\pm. \label{eqn:R1R2R3}
\end{align}
Here $e(z)=\widetilde O_1(z^2)$, and it does not have  $\pm$ dependence. Further, $\omega_1^\pm(z(x-y))$ satisfies the same bounds as $z \omega^\pm(z|x-y|)$.  We note that these expansions differ slightly from those in Sections~\ref{sec:exps} and \ref{sec:m exp}, as we tailor them to prove the dispersive bounds rather than to develop expansions for $M^\pm(z)^{-1}$.

We note that the dispersive bounds for the term  containing only $R_5^\pm$ is identical to the ones given for the Schr\"odinger operator in \cite{Sc2} and \cite{eg2}, since $R_5^\pm$ satisfies the same bounds and cancellation properties as the Schr\"odinger resolvent $R^\pm_0$. Moreover,  the corresponding orthogonality property 
\begin{align}\label{eqn:Qtrick}
QvM_{11}=M_{11}v^* Q=0
\end{align}
holds because of the projection $I_1$. The slight difference in the phase in Stone's formula can be taken care of using Lemma~\ref{lem:high stat phase} in place of Lemma~2 in \cite{Sc2}. The contribution of the terms containing $R_2^\pm$ and $R_3^\pm$ in addition to  $R_5^\pm$ are easier since $R_2^\pm$ and $R_3^\pm$ satisfy the same bounds as $F$ or $G$ from Lemma 3.3 in \cite{eg2} (also see \cite{Sc2}). Therefore one does not need the orthogonality property for these terms. 

Thus, it suffices to consider the terms containing $R_1$ or $R_4^\pm$ on the left. We will write the operator $\mR_0^\pm$ on the right as $\mR^\pm_L+\mR^\pm_H$, where
\be\label{eq:RLRH}\left\{\begin{array}{l}
\mR_L^\pm(\lambda)(x,y)=\chi(z|x-y|) \mR_0^\pm(\lambda)(x,y), \\
\mR_H^\pm(\lambda)(x,y)=\widetilde\chi(z|x-y|) \mR_0^\pm(\lambda)(x,y) = e^{\pm i z |x-y|} \widetilde\omega_\pm(z(x-y)).\end{array} \right.
\ee
Before we bound the contribution of these terms to the
Stone formula, \eqref{Stone}, we note that the operator $R_1$ is not bounded as an operator from $L^1\to L^2$ or from  $L^2\to L^\infty$.  This is an important technical difference from the analysis of Schr\"odinger operators in \cite{Sc2,eg2}.  One can iterate the standard resolvent identity $\mR_V=\mR_0-\mR_0 V\mR_V$ to smooth out the local singularity and obtain a bound from $L^1\to L^\infty$, 
though this would cause the time decay to be of the form $|t|^{-1}(\log t)^k$ for some $k>0$ for large $t$ due to the leading $\log \lambda$ behavior of the free resolvent, see Lemma~\ref{lem:R0exp}.  Instead, we consider the Dirac evolution as a mapping from the Hardy space $H^1$ to $BMO$.    The following lemma is useful.

\begin{lemma}\label{lem:R1h1j}  For any $H^1 \times H^1(\R^2)$ atom $g$, and for $0<z\les 1$, we have 
\be\label{eq:R1h11} 
 v  R_1 g=  g_1+g_2\widetilde O_1(z),  
\ee
where $$\|g_j\|_{L^2}\les \|v\|_{L^2}+\|v\|_{L^\infty},\,\,j=1,2.$$
%\be\label{eq:R1h11} 
%\Big\|v(x)\int_{\R^2} R_1(x,y,z)g(y) dy \Big\|_{L^2_x}\les \|v\|_{L^\infty}+\|v\|_{L^2},
%\ee
%and 
%\be\label{eq:R1h12}
%\Big\|v(x)\int_{\R^2} \partial_z R_1(x,y,z) g(y) dy \Big\|_{L^2_x}\les   \|v\|_{L^2} 
%\ee 
Furthermore,  
\be\label{eq:R1h13}
\sup_{0<z\ll 1} \left(\|v \mR_L^\pm  g \|_{L^2} + z \|v \partial_z \mR_L^\pm  g \|_{L^2}\right) \les       \|v\|_{L^2},
\ee
and
\be\label{eq:R1h14}
 \| \chi(z) (\mR_L^+-\mR_L^-)  g  \|_{L^\infty_x L^\infty_z}   \les    1, \ee
\be\label{eq:R1h15}
\left\| \chi(z) \partial_z (\mR_L^+ -\mR_L^-) g \right\|_{L^\infty_x L^1_z}\les  1.
\ee
  %|v(x)| c v(x) \int |x-y|\chi^\prime(z|x-y|) I_1g(y) dy+ v(x) O(1).

%\be\label{eq:R1h13}
%\Big\|v(x)\int_{\R^2} \partial_z \big[h^\pm(z)^{-1}R_1^\pm(x,y,z)\big] f(y) dy \Big\|_{L^2_x}\les ?? \|v\|_{L^\infty}+\|v\|_{L^2}
%\ee
\end{lemma}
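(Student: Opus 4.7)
All four estimates exploit the atomic structure of $H^1\times H^1$: each component of $g$ is supported in a ball $B(x_0,r)$, satisfies $\|g\|_\infty\lesssim r^{-2}$, and has $\int g\,dy=0$. Combining this vanishing moment with the kernel decomposition \eqref{eqn:R1R2R3} of $\mathcal R_0^\pm$ reduces each estimate to a pointwise bound on a definite piece of the kernel.

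For \eqref{eq:R1h11}, split $\chi(z|x-y|)=1+(\chi(z|x-y|)-1)$. The ``$1$'' part makes $R_1$ a Riesz-type integral: on $|x-x_0|\le 2r$ a direct bound using $\|g\|_\infty\lesssim r^{-2}$ gives $|R_1^{(0)}g(x)|\lesssim r^{-1}$, while on $|x-x_0|>2r$ the cancellation $\int g=0$ produces $|R_1^{(0)}g(x)|\lesssim r/|x-x_0|^2$; together these yield $\|R_1^{(0)}g\|_{L^2}\lesssim 1$ uniformly in $r$. The correction term is supported in $|x-y|\gtrsim 1/z$, where $|x-y|^{-1}\lesssim z$; a further use of $\int g=0$ gives an $L^2$ norm of order $z$ with the analogous bound for its $z$-derivative. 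This delivers $vR_1g=g_1+g_2\widetilde O_1(z)$ with $\|g_j\|_{L^2}\lesssim\|v\|_\infty+\|v\|_{L^2}$.

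For \eqref{eq:R1h13}, expand $\mathcal R_L^\pm$ into the five pieces of \eqref{eqn:R1R2R3}. The key observation is that, apart from $R_1$, each piece applied to $g$ is uniformly $L^\infty$-bounded: the kernel $\log|x-\cdot|$ in $R_5^\pm=2mI_1R_0^\pm$ lies in $\mathrm{BMO}$ with a universal norm, so by $H^1$-$\mathrm{BMO}$ duality $\|R_0^\pm g\|_{L^\infty}\lesssim 1$; the smoother pieces $R_2^\pm$, $R_3^\pm$, and $\chi R_4^\pm$ admit analogous bounds. H\"older pairing with $v$ then furnishes the $\|v\|_{L^2}$-controlled $L^2$ bound, and the singular $R_1$ contribution is absorbed through \eqref{eq:R1h11}. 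For the weighted derivative $z\|v\partial_z\mathcal R_L^\pm g\|_{L^2}$, $\partial_z$ acting on $\chi(z|x-y|)$ localizes to the annulus $|x-y|\approx 1/z$, so the factor $|x-y|\sim z^{-1}$ is absorbed by the prefactor $z$; $\partial_z$ on the remaining $z$-dependences of the $R_j^\pm$ is controlled by the same atomic pointwise estimates.

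The bounds \eqref{eq:R1h14}--\eqref{eq:R1h15} use the identity $[R_0^+-R_0^-](z^2)(x,y)=\tfrac{i}{2}J_0(z|x-y|)$, which gives
\[
(\mathcal R_0^+-\mathcal R_0^-)(\lambda)(x,y)=\tfrac{i}{2}\big(-i\alpha\cdot\nabla+m\beta+\sqrt{m^2+z^2}\big)J_0(z|x-y|),
\]
a smooth kernel built from $J_0,J_1$. On the support $\chi(z|x-y|)\lesssim 1$ these Bessel factors are uniformly bounded, so the kernel is pointwise bounded and convolution with $g$ (using $\|g\|_{L^1}\lesssim 1$) yields \eqref{eq:R1h14}. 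For \eqref{eq:R1h15}, each $\partial_z$ brings a factor $|x-y|\lesssim z^{-1}$; integrating against the low-energy cutoff $\chi(z)$ supported in $z\lesssim 1$ produces a uniform constant. The main obstacle is the weighted derivative in \eqref{eq:R1h13}: one must simultaneously track the atom scale $r$, the annulus scale $1/z$ produced by $\chi'(z|x-y|)$, and the decay of $v$, so that the cancellation $\int g=0$ is applied precisely on whichever region survives the cutoff.
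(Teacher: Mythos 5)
Your overall strategy — atomic decomposition, cancellation $\int g=0$, and the kernel splitting of $\mR_0^\pm$ — matches the paper's, and the treatment of \eqref{eq:R1h11} and \eqref{eq:R1h14} is sound. For \eqref{eq:R1h11} you re-derive the $H^1\to L^2$ bound for the first-order homogeneous kernel $\alpha\cdot(x-y)/|x-y|^2$ from scratch (near/far split plus cancellation), whereas the paper simply cites Theorem~1 of \cite{dinglu}; your route is more self-contained but equivalent. (Note the cancellation $\int g=0$ is not actually needed for the $\widetilde O_1(z)$ correction: on $\operatorname{supp}\widetilde\chi(z|\cdot|)$ the kernel is already $\lesssim z$ pointwise, and $\|g\|_{L^1}\lesssim 1$ suffices.) For \eqref{eq:R1h13} the paper uses the cleaner three-term expansion $\mR_L=R_1-\tfrac{m}{\pi}I_1\log(z|x-y|)\chi(z|x-y|)+\widetilde O_1(1)$ together with the scale invariance of the $BMO$ norm of $\chi(z|\cdot|)\log(z|\cdot|)$; your version via the five-piece decomposition \eqref{eqn:R1R2R3} is workable but you should be careful that the $BMO$ kernel is $\chi(z|x-y|)\log(z|x-y|)$, not $\log|x-y|$ alone, and that its uniformity in $z$ comes from rescaling.

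The one genuine gap is in your argument for \eqref{eq:R1h15}. You write that ``each $\partial_z$ brings a factor $|x-y|\lesssim z^{-1}$; integrating against the low-energy cutoff $\chi(z)$ produces a uniform constant.'' This cannot be right as stated: $\int_0^1\chi(z)\,z^{-1}\,dz$ diverges, so bounding the $\partial_z$-output by $z^{-1}$ and then integrating in $z$ fails. The quantity to control is an $L^1_z$ norm, and the dominant contribution, coming from $\partial_z$ hitting $\chi(z|x-y|)$ (or $J_0(z|x-y|)$), is a kernel of the form $|x-y|\,\chi'(z|x-y|)$ applied to $I_1g$. The saving observation — which is exactly what the paper uses — is the $z$-integral scaling identity
\[
\int_0^\infty |x-y|\,|\chi'(z|x-y|)|\,dz = \int_0^\infty |\chi'(u)|\,du = O(1),
\]
uniform in $x,y$, after which one integrates $|I_1g(y)|\,dy\lesssim\|g\|_{L^1}\lesssim 1$. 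In other words, the $\chi'$ localizes to a single dyadic $z$-scale for each fixed $|x-y|$, and this is what kills the apparent $z^{-1}$; simply observing $|x-y|\lesssim z^{-1}$ and integrating against $\chi(z)$ does not.
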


\begin{proof}
We rewrite $R_1$ as
\begin{multline} \label{eq: R_1}
R_1=\frac{i}{2\pi} \chi(z|x-y|)\frac{\alpha\cdot(x-y)}{|x-y|^2}\\
=\frac{i}{2\pi} \frac{\alpha\cdot(x-y)}{|x-y|^2}-\frac{i}{2\pi} \widetilde\chi(z|x-y|)\frac{\alpha\cdot(x-y)}{|x-y|^2} 
= \frac{i}{2\pi} \frac{\alpha\cdot(x-y)}{|x-y|^2}+\widetilde O_1(z). 
\end{multline} 
The contribution of the first summand gives $g_1$.
By  Theorem~1 in \cite{dinglu}, the operator defined by the first term in $R_1$ is bounded from $H^1$ to $L^2$.
 Therefore, 
$$
\|g_1\|_{L^2}\leq \|v\|_{L^\infty} \Big\| \int_{\R^2}  \frac{\alpha\cdot (x-y)}{|x-y|^2} g(y) dy \Big\|_{L^2_x} \les \|v\|_{L^\infty}.
$$
The bound for $g_2$ is immediate from the expansion above. 

The second claim follows from 
the expansion
$$
	\mR_L(z)(x,y)=R_1-\frac{mI_1}{\pi}\log(z|x-y|)\chi(z|x-y|)
	+\widetilde O_1(1), \,\,\,\,\,\,\,\,\, |\partial_z \mR_L|\les z^{-1},
$$
the fact that $\chi(zx)\log(zx)\in BMO$ with norm independent of $z$, and $H^1$--$BMO$ duality.

To obtain the last two claims, note that  
\begin{multline}\label{eqn:RL difference}
[\mR_L^+-\mR_L^-](\lambda)(x,y)= \chi(z|x-y|)[-i\alpha \cdot \nabla + m \beta +\sqrt{m^2+z^2} I] J_0(z|x-y|)\\
=\chi(z|x-y|)(cI_1+ \widetilde O_1(z)),
\end{multline}
which immediately implies \eqref{eq:R1h14}. To obtain \eqref{eq:R1h15}, note that
$$
\int  \left(\int |x-y| |\chi^\prime(z|x-y|)| dz \right)  |I_1g(y)| dy \les \|g\|_{L^1}= 1. \qedhere
$$
\end{proof}

%A consequence of this lemma is, if $v\in L^2 \cap L^\infty$ and $g\in H^1$, we have
%\begin{align}
%	[vR_1 g](z, x_1)&=g_1(x_1), \quad 
%	\partial_z[vR_1 g](z, x_1)=g_2(x_1),\label{vR1g} \\
%	[v\mR_L g](z, x_1)&=g_3(x_1), \quad 
%	\partial_z[v\mR_L g](z, x_1)=g_2(x_1) \widetilde O(1/z)
%	\label{vRLg}
%\end{align}
%for some $g_j \in L^2$.  By the duality of $H^1$ and BMO, we have
%similar estimates for $gR_1v$ for $g\in$BMO.

Recall that   $\log^-(y):=-\log (y) \chi_{\{0<y<1\}}$.  In addition, we define  
$\log^+(y)=\log(y)\chi_{\{y>1\}}$.

We start with the contribution of the terms $A:=QD_0Q  -SS_1D_1S_1-S_1D_1S_1S$ from \eqref{eq:resolvem}, for which we rely only on the absolute boundedness of the operator, and do not use any orthogonality properties of the projection operators $Q$ or $S_1\leq Q$.  
By symmetry and the discussion above, it suffices to consider the terms
\be \label{eq:gammaA}
\Gamma_1:=  R_1   v^*A v (\mR_L^+-\mR_L^-), \,\,\,\, \Gamma_2:=  R_1   v^*A v \mR_H^+, \,\,\,\,\Gamma_3:=  R_4^+ v^*
  A v  \mR_0^+,  
\ee

\begin{prop}\label{prop:A} Let $\Gamma_j$ be defined as in \eqref{eq:gammaA}. Then, under the assumptions of 
Theorem~\ref{thm:low energy},
for any $H^1 \times H^1(\R^2) $ atoms $f$, $g$, and for each $j=1,2,3 $ we have 
$$
\int_0^\infty e^{it\sqrt{z^2+m^2}} \frac{z\chi(z)}{\sqrt{z^2+m^2}} \la \Gamma_jf, g\ra dz = O(1/\la t \ra).
$$
\end{prop}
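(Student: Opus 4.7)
My plan is to move the absolutely bounded operator $A$ into the center of an $L^2\times L^2$ pairing via duality,
$$
\la\Gamma_j f,g\ra=\la A\,v\,\mathcal{Y}_j f,\;v\,\mathcal{X}_j^* g\ra,
$$
with $(\mathcal{X}_1,\mathcal{Y}_1)=(R_1,\mR_L^+-\mR_L^-)$, $(\mathcal{X}_2,\mathcal{Y}_2)=(R_1,\mR_H^+)$ and $(\mathcal{X}_3,\mathcal{Y}_3)=(R_4^+,\mR_0^+)$. Absolute boundedness of $A$ on $L^2\times L^2$ reduces matters to uniform-in-$z$ bounds on the two $L^2$ factors. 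Those involving $R_1^*$, $\mR_L^\pm$ or $\mR_L^+-\mR_L^-$ are given directly by Lemma~\ref{lem:R1h1j}; those involving $\mR_H^\pm$ or $R_4^{\pm*}$ follow from Schur's test using the kernel bound $|\widetilde\omega_\pm(z(x-y))|\les(1+z|x-y|)^{-1/2}$ from \eqref{R0large}, the weight decay $|v_{ij}(x)|\les\la x\ra^{-3-}$ supplied by Assumption~\ref{assumption}, and the atom property $\|f\|_{L^1},\|g\|_{L^1}\les 1$. This yields $|\la\Gamma_j f,g\ra|\les 1$ uniformly in $z$, which proves the proposition in the regime $|t|\les 1$.

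For $|t|\gtrsim 1$ the argument splits according to whether additional spatial oscillation is present. For $\Gamma_1$ the only oscillating factor is $e^{it\sqrt{z^2+m^2}}$, and I use the identity
$$
\frac{z\,e^{it\sqrt{z^2+m^2}}}{\sqrt{z^2+m^2}}=\frac{1}{it}\,\partial_z e^{it\sqrt{z^2+m^2}}
$$
to integrate by parts once in $z$. The cutoff $\chi(z)$ annihilates boundary terms, and the derivative landing on the inner product is distributed by Leibniz: the piece with $\partial_z(\mR_L^+-\mR_L^-)$ is controlled by the $L^\infty_xL^1_z$ bound \eqref{eq:R1h15}, while the piece with $\partial_zR_1$ is estimated pointwise using that $\partial_zR_1(x,y)$ is supported on $\{z|x-y|\approx 1\}$ with $|\partial_zR_1(x,y)|\les|x-y|^{-1}$, then integrated in $z$ against the rapidly decaying weight $v$.

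For $\Gamma_2$ and $\Gamma_3$ at least one outer resolvent carries an $e^{\pm iz|x-y|}$ factor, and I apply Lemma~\ref{lem:high stat phase} to the inner $z$-integral for each fixed tuple of spatial variables. The $z$-amplitude satisfies the size and derivative hypotheses of the lemma thanks to the estimates on $\widetilde\omega_\pm$, producing $\la t\ra^{-1}$ uniformly in the spatial separations; Fubini together with Schur's test against $v$ and the atom bounds close the outer spatial integrations. The main technical obstacle is $\Gamma_3$, where both $R_4^+$ and $\mR_H^+$ oscillate, producing a combined phase $\sqrt{z^2+m^2}+z(r_1+r_2)/t$ with $r_1,r_2$ two different spatial separations. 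Since the two oscillations carry the same sign, for $t>0$ the phase derivative stays strictly positive on $\mathrm{supp}\,\chi$ and a single integration by parts already delivers $\la t\ra^{-1}$; for $t<0$ the critical point $z_0=m(r_1+r_2)/\sqrt{t^2-(r_1+r_2)^2}$ lies on the real axis, and Lemma~\ref{lem:high stat phase} applies with $r$ replaced by $r_1+r_2$, again yielding $\la t\ra^{-1}$ uniformly in $r_1,r_2$ and therefore, after the spatial integrations, uniformly in $f$ and $g$.
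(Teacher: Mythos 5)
Your overall plan is the right one and matches the paper's at a high level (absolute boundedness of $A$ plus duality for $|t|\lesssim 1$, integration by parts for $\Gamma_1$, stationary phase via Lemma~\ref{lem:high stat phase} for $\Gamma_2,\Gamma_3$), but there is a genuine gap in the way you order the operations for $\Gamma_2$ and $\Gamma_3$. You apply Lemma~\ref{lem:high stat phase} to the $z$-integral ``for each fixed tuple of spatial variables'' and then close the spatial integrals by Fubini and Schur's test. This leaves the kernel $R_1(x,x_1)\sim |x-x_1|^{-1}$ sitting inside the amplitude; the stationary-phase lemma only produces a bound that is $\la t\ra^{-1}$ times (roughly) $|[v^*Av](x_1,y_1)|/|x-x_1|$, and when you then integrate $\int |g(x)|\,|x-x_1|^{-1}\,dx$ you get $\sim r(B)^{-1}$ for an atom $g$ supported on a ball $B$ — unbounded as $r(B)\to 0$. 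The absolute values built into Lemma~\ref{lem:high stat phase} destroy the mean-zero cancellation, which is the only thing that tames $R_1$. The paper resolves this by first applying $Av R_1$ to $g$, obtaining $\widetilde g_1 + \widetilde g_2\,\widetilde O_1(z)$ with $\widetilde g_1,\widetilde g_2\in L^2$ uniformly in $z$ (this is exactly where Lemma~\ref{lem:R1h1j} / the Ding--Lu $H^1\to L^2$ bound enters), and only then invoking Lemma~\ref{lem:high stat phase} in the remaining variables $(z, y, x_1)$. That ordering is essential and is not optional, so your argument as written would not close.

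Two further issues. First, for $\Gamma_3 = R_4^+ v^* A v\,\mR_0^+$ you speak of ``both $R_4^+$ and $\mR_H^+$'' oscillating, but the right-hand factor is the full $\mR_0^+=\mR_L^+ +\mR_H^+$; the $\mR_L^+$ piece is not oscillatory and needs its own treatment. The paper splits it off, writes $z\,\mR_L = z R_1 + \widetilde O_1\big(z^{1-}(|x-y|^{0}+\log^-|x-y|)\big)$ to absorb the extra power of $z$ coming from $\omega_1^\pm\sim z\omega_\pm$, and recycles the $\Gamma_2$ argument for the $R_1$ part (again through Lemma~\ref{lem:R1h1j}). Your sketch skips this piece. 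Second, a small slip in the $\Gamma_1$ part: for $R_1(x,y)=\frac{i}{2\pi}\chi(z|x-y|)\frac{\alpha\cdot(x-y)}{|x-y|^2}$ one has $|\partial_z R_1(x,y)|\lesssim |\chi'(z|x-y|)|\lesssim 1$ on the support $\{z|x-y|\approx 1\}$, not $\lesssim |x-y|^{-1}$; what is true is that $\int_0^1|\partial_z R_1(x,y)|\,dz\lesssim |x-y|^{-1}$. Since $\partial_z R_1$ is supported away from the singular diagonal, the $\Gamma_1$ estimate still closes (one gets a uniform-in-$z$ bound $\lesssim\|v\|_{L^2}^2\||A|\|$ without any $H^1$ cancellation and then integrates $dz$ over $(0,1)$), but the stated pointwise bound should be corrected.
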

\begin{proof} We start with $\Gamma_1$. 
By an integration by parts we rewrite the integral above as
$$
-\frac{ie^{itm}}t\la \Gamma_1|_{z=0} f,g \ra +\frac{i}t\int_0^\infty e^{it\sqrt{z^2+m^2}}    \partial_z \big[\chi(z)  \la \Gamma_1f, g\ra \big] dz,
$$
where $\Gamma_1|_{z=0}$ means $\lim_{z\to 0+} \Gamma_1(z)$.
Therefore, we need to prove that 
$$
\big|\la \Gamma_1|_{z=0} f,g \ra \big|\les 1,
$$
and 
\be\label{eq:gammaibp}
 \big\|  \partial_z \big[\chi(z)  \la \Gamma_1f, g\ra \big] \big\|_{L^1_z}\les 1.
\ee
In fact, since $\chi(1)=0$, by the fundamental theorem of calculus, it suffices to prove \eqref{eq:gammaibp}.  
Using the bounds in Lemma~\ref{lem:R1h1j}, we have
$$
\left| \partial_z \big[\chi(z)  \la \Gamma_1f, g\ra \big]\right| \les (\|v\|_{L^\infty}+\|v\|_{L^2})\|v\|_{L^2}\||A|\|_{L^2\to L^2}. 
$$
The claim for small $t$ also follows from these bounds without integrating by parts. 

Now we consider $\Gamma_2$.   By the absolute boundedness of $A$ and Lemma~\ref{lem:R1h1j}, we have 
\be\label{eq:AvR1g}
AvR_1 g = \widetilde g_1+\widetilde g_2 \widetilde O_1(z),
\ee
for some $\widetilde g_1, \widetilde g_2\in L^2$. Using this we write the oscillatory integral as
$$
\int_{\R^4}f(y) v(x_1)  \int_0^\infty e^{it\sqrt{z^2+m^2}\pm i z|x_1-y|} a(z,y,x_1) dz dx_1 dy,
$$ 
where 
$$
a(z,y,x_1) =(\widetilde g_1(x_1)+\widetilde g_2(x_1)\widetilde O_1(z)) \frac{z\chi(z)}{\sqrt{z^2+m^2}}  \widetilde\chi(z|y-x_1|) \widetilde\omega_\pm(z(y-x_1)). 
$$
Therefore, using  Lemma~\ref{lem:high stat phase}, we bound the integral above by
\begin{align*}
\frac1{\la t\ra} \int_{\R^4} |f(y)| |v(x_1)| (|\widetilde g_1(x_1)|+|\widetilde g_2(x_1)|)  dx_1 dy   \leq \frac1{\la t\ra} \|f\|_{L^1} \|v\|_{L^2} (\|\widetilde g_1\|_{L^2}+\|\widetilde g_2\|_{L^2}).
\end{align*}
 
Finally we consider $\Gamma_3$. 
Using $\mR_0=\mR_L+\mR_H$, we bound the contribution of $\mR_H$ to the integral by (with $r=|x-x_1|$, $s=|y-y_1|$)
$$ 
\sup_{r,s}\left|\int_0^\infty e^{it\sqrt{z^2+m^2}\pm iz (r+s) } \frac{z\chi(z)}{\sqrt{z^2+m^2}} \widetilde\chi(zr) \omega_1^\pm(zr)  \widetilde\chi(zs) \widetilde\omega_\pm(zs) dz  \right|,
$$
which is $O(1/\la t\ra)$ by Lemma~\ref{lem:high stat phase} noting that
$$
\omega_1^\pm(zr)\widetilde\omega_\pm(zs) =\widetilde O_1\left((1+z(r+s))^{-1/2}\right). 
$$
For $\mR_L$ we note that 
$$
z \mR_L= zR_1+ \widetilde O_1(z^{1-}(1+\log^{-}(|x-y|)).
$$ 
Therefore, since $\omega_1^\pm$ behaves like $z \omega_\pm$,   the argument above for $\Gamma_2 $ takes care of the first summand. 
For the second summand, writing $k(y_1,y)=(1+\log^-(y_1-y))$ and
using  Lemma~\ref{lem:high stat phase} we bound its contribution by
\begin{multline*}
\frac1{\la t\ra} \int_{\R^8} |f(y)| |v^*(x_1)| |A(x_1,y_1)| |v(y_1)| k(y_1-y) |g(x)|   dx_1dy_1  dx dy\\  \les\frac1{\la t\ra} \sup_y \|v(\cdot) (1+\log^-(\cdot-y))\|_{L^2} \| |A| \|_{L^2\to L^2} \|v\|_{L^2}.\qedhere
\end{multline*}

\end{proof}
 
Now we consider the 
contribution of the term $h_{\pm}(z)^{-1} S$   from \eqref{eq:resolvem}  (the contribution of  $h_{\pm}(z)^{-1} SS_1D_1S_1S$ is handled similarly).
By symmetry and the discussion above, it suffices to consider the terms
\be  \label{eq:gammaS}
\Gamma_1:=  R_1   v^*S v \left(\frac{\mR_L^+}{h_+(z)}-\frac{\mR_L^-}{h_-(z)}\right), \,\,\,\, \Gamma_2:=  h_+^{-1} R_1   v^* Sv \mR_H^+, \,\,\,\,\Gamma_3:=  h_+^{-1} R_4^+ v^*
 S v  \mR_0^+.  
\ee 
\begin{prop}\label{prop:S} The assertion of Proposition~\ref{prop:A} is valid for each $\Gamma_j$ defined in \eqref{eq:gammaS}.  
\end{prop}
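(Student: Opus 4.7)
The plan is to follow the three-part structure of Proposition~\ref{prop:A}, with the new ingredient being control of the factor $h_\pm(z)^{-1}$. From \eqref{g def} and the definition of $\mathbbm g^\pm$, one has
\begin{equation*}
h_\pm(z) = -\tfrac{m\|(a,c)\|_2^2}{\pi}\log(z/2) + c \pm \tfrac{im\|(a,c)\|_2^2}{2} + \widetilde O(z^2)
\end{equation*}
for some real constant $c$, so on a sufficiently small neighborhood of the threshold
\begin{equation*}
|h_\pm(z)^{-1}| \les \frac{1}{|\log z|}, \qquad |\partial_z h_\pm(z)^{-1}| \les \frac{1}{z|\log z|^2},
\end{equation*}
and the latter is in $L^1(0,1)$ via the substitution $u=\log(1/z)$. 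Note also that $h_+ - h_- = im\|(a,c)\|_2^2$ is a nonzero imaginary constant, so $h'_+ = h'_-$ and $|h_+ - h_-| \les 1$ while $|h_+h_-| \gtrsim |\log z|^2$.

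For $\Gamma_1$, I would write $\langle \Gamma_1 f, g\rangle = F_+(z)/h_+(z) - F_-(z)/h_-(z)$ with $F_\pm(z) := \langle R_1 v^* S v \mR_L^\pm f, g\rangle$, and use the algebraic identity
\begin{equation*}
\frac{F_+}{h_+} - \frac{F_-}{h_-} = \frac{F_+ - F_-}{h_+} + F_- \cdot \frac{h_- - h_+}{h_+ h_-}.
\end{equation*}
The first summand pairs the difference $F_+-F_-$, controlled by Lemma~\ref{lem:R1h1j}~\eqref{eq:R1h14}--\eqref{eq:R1h15} together with absolute boundedness of $S$, against the bounded factor $1/h_+$. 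The second summand combines $|F_-|\les 1$ (from \eqref{eq:R1h13}) with $|(h_--h_+)/(h_+h_-)|\les |\log z|^{-2}$. Differentiating in $z$ produces additional terms where $\partial_z$ lands either on $1/h_\pm$ (yielding an integrable $1/(z|\log z|^2)$ factor) or on $F_\pm$ (whose worst growth $1/z$ from \eqref{eq:R1h13} is always multiplied by at least one $1/|\log z|^k$ factor after assembling with the remaining $h$'s), so in each case the derivative is in $L^1_z$. Integration by parts in the oscillatory integral and the fundamental theorem of calculus then yield $O(1/\langle t\rangle)$, exactly as in Proposition~\ref{prop:A}.

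For $\Gamma_2$ and $\Gamma_3$, I would apply Lemma~\ref{lem:high stat phase} as in Proposition~\ref{prop:A}. The amplitude now carries the bounded factor $h_+(z)^{-1}$, which preserves the size hypothesis of the lemma. When $\partial_z$ falls on $h_+^{-1}$, it produces $1/(z|\log z|^2)$; multiplied by the $z$-factor already present in the amplitude (from the Stone formula measure $z\chi(z)/\sqrt{z^2+m^2}$), this becomes $1/|\log z|^2 \les 1$, so the derivative hypothesis of Lemma~\ref{lem:high stat phase} also holds. The remaining stationary-phase analysis (both in the $\mR_H$ contribution of $\Gamma_3$ and the $\mR_L$ contribution handled via $zR_1$ and the $\log^-$ remainder) is identical to Proposition~\ref{prop:A}.

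The main bookkeeping obstacle is to verify, term-by-term in the product-rule expansion, that every $1/z$ singularity arising from $h'$ is accompanied by sufficient logarithmic decay. This is guaranteed automatically by the structure $\partial_z(1/h^n) = -nh'/h^{n+1}$, which pairs each new $1/z$ with an additional $1/\log z$; combined with the fact that $h_+-h_-$ is a constant (so the cancellation in $1/h_+ - 1/h_-$ is a clean $1/|\log z|^2$ rather than $1/|\log z|$), all the necessary integrability is built into the algebra.
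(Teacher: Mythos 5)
Your proof is correct, and it takes a genuinely different (and arguably cleaner) route than the paper for $\Gamma_1$. The paper unpacks the kernel $\mR_L^+/h_+ - \mR_L^-/h_-$ explicitly, using the expansion $\mR_L^\pm = R_1 - \frac{mI_1}{\pi}\log(z|x-y|)\chi(z|x-y|) + \widetilde O_1(z^{3/2}(\cdot))$ together with $h_+^{-1}-h_-^{-1}=\widetilde O_1(\log^{-2}z)$, and then handles the resulting $\log|x-y|\cdot\widetilde O_1(\log^{-2}z)$ piece by a direct, somewhat lengthy computation of the $L^1_z$ norm. You instead apply the algebraic identity
\begin{equation*}
\frac{F_+}{h_+}-\frac{F_-}{h_-}=\frac{F_+-F_-}{h_+}+F_-\cdot\frac{h_--h_+}{h_+h_-}
\end{equation*}
at the level of the pairings $F_\pm=\langle R_1 v^*Sv\mR_L^\pm f,g\rangle$, which reduces everything to inputs that have already been established: the first summand is exactly the $\Gamma_1$-type difference estimate from Proposition~\ref{prop:A} (Lemma~\ref{lem:R1h1j}, \eqref{eq:R1h14}--\eqref{eq:R1h15}) with a harmless $h_+^{-1}=\widetilde O_1(1)$ prefactor, while the second combines the single-resolvent bound \eqref{eq:R1h13} with the $\log^{-2}z$ gain coming from the fact that $h_+-h_-$ is a nonzero imaginary constant. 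In particular, the treatment of the $\log|x-y|$ growth is absorbed once and for all into Lemma~\ref{lem:R1h1j}'s \eqref{eq:R1h13} (proved there via the $BMO$-norm of $\chi(zx)\log(zx)$ and $H^1$--$BMO$ duality), rather than being re-derived in-line; your approach is more modular. Both proofs exploit the same two essential facts --- the functional bounds of Lemma~\ref{lem:R1h1j} and the cancellation $h_+^{-1}-h_-^{-1}=\widetilde O_1(\log^{-2}z)$ --- and both treat $\Gamma_2$, $\Gamma_3$ identically by observing $h_+^{-1}=\widetilde O_1(1)$ and invoking Lemma~\ref{lem:high stat phase} as in Proposition~\ref{prop:A}. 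Your bookkeeping of the derivative hitting $h_\pm^{-1}$, $(h_--h_+)/(h_+h_-)$, $F_\pm$ is also consistent with what the paper's longer computation produces.
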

\begin{proof}
The proof for $\Gamma_2$ and  $\Gamma_3$ follows from Proposition~\ref{prop:A} by noting that $h_\pm^{-1}=\widetilde O_1(1)$.
For $\Gamma_1$, it suffices to obtain the inequality \eqref{eq:gammaibp}. 
Note that using Lemma~\ref{lem:R0exp}, the identity $h^\pm(z)=c_1+g^\pm(z)+c_2$, and
 $$
 h^+(z)^{-1}-h^-(z)^{-1}= \widetilde O_1(\log^{-2}z)
$$
we have 
\begin{multline*}
\left(\frac{\mR_L^+}{h_+(z)}-\frac{\mR_L^-}{h_-(z)}\right)=  \chi(z|x-y|)  (R_1+cM_{11}) \widetilde O_1(\log^{-2}z) \\ + \chi(z|x-y|)\left[   -\frac{mI_1}{\pi} \log(|x-y|) \widetilde O_1(\log^{-2}z)  +  \widetilde O_1\big(z^{3/2} ( |x-y|^{3/2}+\log^-|x-y|)\big)\right].
\end{multline*}
The contribution of the first term is $O(\la t\ra^{-1})$ as in the proof of Proposition~\ref{prop:A} using \eqref{eq:R1h11}  for $f$ and $g$.   Using \eqref{eq:AvR1g},
the contribution of the the second term to the left hand side of \eqref{eq:gammaibp} can be bounded by 
\begin{align*}
\int_{\R^4} |v(y_1)| & |f(y)|  
\int_0^\infty \bigg|\partial_z\big[ (\widetilde g_1(y_1)+\widetilde g_2(y_1)\widetilde O_1(z)) \chi(z)\\  & \chi(z|y-y_1|)  \log(|y-y_1|) \widetilde O_1(\log^{-2}z)\big] \bigg| dz dy_1 dy \\
 \les & \int_{\R^4} |v(y_1)|  |f(y)| (|\widetilde g_1(y_1)|+|\widetilde g_2(y_1)|) \\
& \int_0^\infty \chi(z) \log(|y-y_1|) \left(\frac{\chi(z|y-y_1|)}{z\log^3(z)}+ \frac{|y-y_1|\chi^\prime(z|y-y_1|)}{ \log^2(z)}\right) dz dy_1 dy\\
 \les  &\bigg\| v(y_1)  
 \int_0^\infty \chi(z) (1+\log^-(|y-y_1|)) \bigg(\frac{\chi(z|y-y_1|)}{z\log^2(z)} +  |y-y_1|\chi^\prime(z|y-y_1|) \bigg) dz\bigg\|_{L^\infty_y L^2_{y_1} } 
\\
  \les & \left\|   v(y_1)   (1+\log^-(|y-y_1|))\right\|_{L^\infty_y L^2_{y_1} }  \les 1.
\end{align*}
In the second to last inequality, we used that $|\log z|^{-1}\les 1$ and
$$
	|\chi(z)\chi(z|y-y_1|)\log |y-y_1|\,| \les 1+\log^- |y-y_1|+|\log z|.
$$
The contribution of the last term can be handled similarly. 
\end{proof}

Now we consider the 
contribution of the error term $E^\pm(z)$  from \eqref{eq:resolvem}.
By symmetry and the discussion above, and dropping $\pm$ indices,  it suffices to consider the terms 
\be \label{eq:gammaEr}
\Gamma_1:=   R_1  v^* E v \mR_L,  \,\,\,\, \Gamma_2:=  R_1  v^* E v \mR_H,  \,\,\,\,  \Gamma_3:=  R_4 v^* E   v \mR_0. 
\ee
\begin{prop}\label{prop:Erlowlow} The assertion of Proposition~\ref{prop:A} is valid for each $\Gamma_j$ defined in \eqref{eq:gammaEr}.   
\end{prop}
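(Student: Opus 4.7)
The argument follows the blueprints of Propositions~\ref{prop:A} and \ref{prop:S}, with the key new wrinkle being that the error term $E^\pm(z)$ carries nontrivial $z$-dependence controlled only by the Hilbert--Schmidt estimates
\[
\|E^\pm(z)\|_{HS} \lesssim z^{1/2}, \qquad \|\partial_z E^\pm(z)\|_{HS} \lesssim z^{-1/2},
\]
which take the place of the uniform absolute boundedness of $A$ used in Proposition~\ref{prop:A}.

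For $\Gamma_1$ I would follow the integration-by-parts argument of Proposition~\ref{prop:A}. The boundary term at $z=0$ vanishes since $\|E^\pm(0)\|_{HS}=0$, and at $z\sim 1$ it vanishes by the cutoff $\chi$. Writing $\langle \Gamma_1 f, g\rangle = \langle E^\pm(z)\, v\mathcal{R}_L^\pm f,\, v R_1^* g\rangle$ and pairing via Hilbert--Schmidt on one side and $L^2$ on the other, together with the bounds of Lemma~\ref{lem:R1h1j} (specifically \eqref{eq:R1h11} and \eqref{eq:R1h13}) yields
\[
|\langle \Gamma_1 f,g\rangle| \lesssim z^{1/2}, \qquad |\partial_z \langle \Gamma_1 f,g\rangle| \lesssim z^{-1/2},
\]
both of which are integrable on $(0,\epsilon]$; hence the analogue of \eqref{eq:gammaibp} holds and the $\langle t\rangle^{-1}$ bound follows, with the $|t|\leq 1$ case being immediate from $|\langle\Gamma_1 f,g\rangle|\lesssim z^{1/2}$.

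For $\Gamma_2$ and $\Gamma_3$ the outer $\mathcal{R}_H^\pm$ (and, for $\Gamma_3$, $R_4^\pm$) contributes a high-frequency phase $e^{\pm iz r}$, so I would exploit the resulting oscillatory integral in $z$ via Lemma~\ref{lem:high stat phase}. The novelty relative to Proposition~\ref{prop:A} is that the amplitude now contains the $z$-dependent factor $E^\pm(z)(y_3,y_4)$, for which no pointwise bound is available. To handle this I would use Fubini/Minkowski to interchange the spatial and $z$-integrations: apply Lemma~\ref{lem:high stat phase} pointwise in the outer spatial variables, producing a bound of the form $\langle t\rangle^{-1}$ times spatial integrals of $|E^\pm(z)(y_3,y_4)|$ and $|\partial_z E^\pm(z)(y_3,y_4)|$, and then collapse those via Cauchy--Schwarz in $(y_3,y_4)$ against the Hilbert--Schmidt bounds above. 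For $\Gamma_3$ I would further decompose $\mathcal{R}_0^\pm = \mathcal{R}_L^\pm + \mathcal{R}_H^\pm$: the $\mathcal{R}_H^\pm$ piece yields the total outer phase $t\sqrt{z^2+m^2}\pm z(r_1+r_2)$ and is handled exactly as in $\Gamma_2$ (using that $\omega_1^\pm \cdot \widetilde\omega_\pm$ still has the $(1+z(r_1+r_2))^{-1/2}$ decay required by Lemma~\ref{lem:high stat phase}), while the $\mathcal{R}_L^\pm$ piece is reduced via $z\mathcal{R}_L^\pm = zR_1 + \widetilde O_1(z^{1-}(1+\log^-|x-y|))$ to either the $\Gamma_2$-type argument (for the $zR_1$ term) or a direct amplitude bound that absorbs the mild logarithmic singularity into weighted $L^2$ norms of $v$, exactly as in the last step of the proof of Proposition~\ref{prop:A}.

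The main obstacle is the correct ordering of integrations in the $\Gamma_2$ and $\Gamma_3$ steps: one must first apply stationary phase in $z$ for each fixed spatial configuration, producing a pointwise-in-space bound involving $|E^\pm(z)|$ and $|\partial_z E^\pm(z)|$, and only then invoke Cauchy--Schwarz with the HS norms. Arranging this ordering correctly is precisely what allows the $z^{\pm 1/2}$ HS bounds to close the spatial integrals while preserving the $\langle t\rangle^{-1}$ decay in time; the spatial kernel $(1+\log^-|y-y_1|)v(y_1)$ that appears in the $\mathcal{R}_L^\pm$ piece of $\Gamma_3$ is controlled in $L^\infty_y L^2_{y_1}$ under the decay hypothesis on $v$, matching the estimate used at the end of the proof of Proposition~\ref{prop:A}.
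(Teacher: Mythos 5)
Your overall strategy matches the paper's: integration by parts for $\Gamma_1$ and Lemma~\ref{lem:high stat phase} for $\Gamma_2,\Gamma_3$, with the error term handled by Hilbert--Schmidt bounds. The $\Gamma_1$ argument via the direct pairing $\la E\,(v\mR_L f),\,vR_1^* g\ra$ and the operator inequality $|\la Eu,w\ra|\le\|E\|_{HS}\|u\|_2\|w\|_2$ is correct and in fact a shade cleaner than the paper's three-term Leibniz expansion \eqref{temp:errorgamma}, since \eqref{eq:R1h11} and \eqref{eq:R1h13} give uniform $L^2$ control of both factors and their $z$-derivatives.

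There is, however, a genuine gap in how you state and then invoke the Hilbert--Schmidt estimates for $\Gamma_2$ and $\Gamma_3$. You record the bounds as $\|E^\pm(z)\|_{HS}\les z^{1/2}$ and $\|\partial_z E^\pm(z)\|_{HS}\les z^{-1/2}$, i.e., pointwise in $z$. But to apply Lemma~\ref{lem:high stat phase} for each fixed choice of the inner spatial variables $(x_1,y_1)$ you must exhibit a \emph{$z$-independent} majorant $C(x_1,y_1)$ with $|a(z)|\le C(x_1,y_1)\,\frac{z\chi(z)\widetilde\chi(zr)}{(1+zr)^{1/2}}$ and a corresponding bound on $|\partial_z a|$; the conclusion of the lemma is then $\la t\ra^{-1}C(x_1,y_1)$, and only \emph{after} that do you integrate in $(x_1,y_1)$ and invoke Hilbert--Schmidt. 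The quantity that plays the role of $C$ is not $|E(z,x_1,y_1)|$ (that still depends on $z$) but $\widetilde E(x_1,y_1):=\sup_{0<z\ll1}z^{-1/2}|E(z,x_1,y_1)|$ and the companion $\sup_z z^{1/2}|\partial_z E(z,x_1,y_1)|$; closing the spatial integral then requires the bound $\big\|\sup_z z^{j-1/2}|\partial_z^j E(z)(\cdot,\cdot)|\big\|_{HS}\les1$ exactly as stated in Lemma~\ref{lem:Minverseregular}, which is strictly stronger than the pointwise-in-$z$ bounds you wrote. Your phrase ``spatial integrals of $|E^\pm(z)(y_3,y_4)|$'' after stationary phase has already been applied cannot be literally correct, since $z$ has been integrated out. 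This is a correctable but real omission: you need to quote and use the $\sup_z$-inside-the-$HS$-norm formulation. Once that replacement is made, the rest of your argument for $\Gamma_2$ and for both pieces of $\Gamma_3$ (the $\mR_H$ piece via the composite phase and the $(1+z(r_1+r_2))^{-1/2}$ decay, the $\mR_L$ piece via the splitting $z\mR_L=zR_1+\widetilde O_1(z^{1-}(1+\log^-|x-y|))$) is in line with the paper's proof.
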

\begin{proof}
For $\Gamma_1$, as in the proof of Proposition~\ref{prop:A} it suffices to prove that
$$
\big\|  \partial_z \big[\chi(z)  \la \Gamma_1f, g\ra \big] \big\|_{L^1_z}\les 1.
$$
 Note that
\be\label{temp:errorgamma}
\partial_z ( R_1  v^* E v \mR_L) =(\partial_z  R_1)  v^* E v\mR_L + R_1  v^* (\partial_z E) v\mR_L + R_1  v^* E v (\partial_z  \mR_L).
\ee
We only consider the contribution of the last summand, the others are similar.
We have 
$$
\partial_z\mR_L= \partial_z R_1 +  r\chi^\prime(zr)\log(zr) +  O(1/z)=  O\left(\frac1{r^{0+}z^{1+}}\right).
$$ 
We write (with $\widetilde E(x_1,y_1)  =\sup_z z^{-1/2}|E(z,x_1,y_1)|$)  
\begin{multline*}
\int \chi(z) |\la R_1  v^* E v (\partial_z   \mR_L  )f,g\ra| dz \\ 
\les \int \frac {\chi(z)}{z^{\frac12+}} |(v R_1 g)(z,x_1)| | \widetilde E(x_1,y_1) |  \frac{|v(y_1)|  }{|y-y_1|^{0+}}  |f(y)| dx_1 d y_1 d y dz.
\end{multline*}
By Cauchy-Schwarz and  Lemma~\ref{lem:R1h1j}, we estimate the $x_1$ integral by  
$$
\|(v R_1 g)(z,x_1)\|_{L^2_{x_1}} \|\widetilde E(x_1,y_1)\|_{L^2_{x_1}}\les \|\widetilde E(x_1,y_1)\|_{L^2_{x_1}},
$$
uniformly in $z$. Therefore, we estimate the integral above by
$$
 \int    \|\widetilde E(x_1,y_1)\|_{L^2_{x_1}} \frac{ |v(y_1)| }{|y-y_1|^{0+} }   |f(y)|  d y_1 d y   \les  \|f\|_{L^1} \|\widetilde E\|_{HS} \sup_y\|v/|y-\cdot|^{0+} \|_{L^2}  \les 1,
$$
where we used Cauchy-Schwarz in the $y_1$ integral.

The bound for the contribution of the first summand on the right hand side of \eqref{temp:errorgamma}   is nearly identical.  For the second summand,
one must use Lemma~\ref{lem:R1h1j} twice and use that
$\sup_z | z^{\f12}\partial_z E(z,x_1,y_1)|$ is Hilbert-Schmidt.

We now consider $\Gamma_2$, whose  contribution   to the
Stone formula is given by
\begin{multline*}
	\int_{\R^4}\int_0^\infty e^{it\sqrt{z^2+m^2}\pm i |y-y_1|} \frac{z\chi(z)}{\sqrt{z^2+m^2}} [vR_1g](z,x_1)
	E(z,x_1,y_1)\\ v(y_1) \widetilde \omega_\pm (z(y_1-y))
	f(y) \,  dx_1 dy_1 dy dz.
\end{multline*} 
We apply Lemma~\ref{lem:high stat phase} to this integral
with
$$
	a(z)=\frac{z\chi(z)}{\sqrt{z^2+m^2}}  vR_1g 
	Ev\widetilde \omega_\pm (z(y_1-y))
$$
we note that by Lemma~\ref{lem:R1h1j} we have
$$
	vR_1 g(z,x_1) = g_1(x_1)+ g_2(x_1) \widetilde O_1(z).
$$
for some $g_1,g_2\in L^2$.  This along with the bounds
on $E$ from Lemma~\ref{lem:Minverseregular}  and the definition of $\widetilde \omega_\pm$ yields the bound
\begin{align*}
	|a(z)| & \les (|g_1(x_1)|+|g_2(x_1)|)
	\sup_z \big|z^{-\f12} E(z,x_1,y_1)
	\big| |v(y_1)|\frac{z\chi(z)\widetilde \chi(z|y-y_1|)}{(1+z|y-y_1|)^{\f12}},\\
	|\partial_z a(z)|& \les  |g_2(x_1)| 
	\big(\sup_z \big|z^{-\f12} E(z,x_1,y_1)\big|  +\sup_z \big|z^{\f12} \partial_z E(z,x_1,y_1)
	\big|\big)  |v(y_1)|\frac{\chi(z)\widetilde \chi(|y-y_1|)}{(1+z|y-y_1|)^{\f12}}.
\end{align*}
This implies the desired time decay bound using Lemma~\ref{lem:high stat phase}.  The spatial integrals can be controlled as in the case of $\Gamma_1$.

For $\Gamma_3$, writing $\mR_0=\mR_L+\mR_H$, the contribution of $\mR_L$ follows as in the bounds of
$\Gamma_2$.  
For $\mR_H$, let $r=|x-x_1|+|y-y_1|$ and  
$$
a(z)=\frac{z \chi(z) E(x_1,y_1,z) }{h(z)\sqrt{z^2+m^2}} 
\widetilde\chi(z|y-y_1|) \widetilde\chi(z|x-x_1|) \widetilde\omega^\pm(z|x-x_1|)\widetilde\omega^\pm(z|y-y_1|).
$$
Note that $a(z)$ satisfies the bounds 
\begin{align*}
|a(z)|&\les  \frac{z \chi(z) \widetilde{\chi }(zr)  }{(1+zr)^{1/2}} \sup_z  | E(x_1, y_1,z)|,\\
|\partial_z a(z)|&\les \frac{ \chi(z) \widetilde{\chi }(zr)  }{(1+zr)^{1/2}} \sup_z \left(|  E( x_1,y_1,z)|  + |z  \partial_z  E(x_1, y_1,z)|\right).
\end{align*}
Therefore, using Lemma~\ref{lem:R1h1j} and then the bounds for $E$ and $\partial_z E$ given in Lemma~\ref{lem:Minverseregular},  we obtain the bound
$$
\frac1{\la t\ra} \int_{\R^8}  \sup_z \left(|E(x_1, y_1,z)|  + |z  \partial_z \mathcal E(x_1, y_1,z)|\right) |v(x_1)||v(y_1)| |f(x)|  |g(y)|  dx dy dx_1   dy_1 \les \frac1{\la t\ra}. \qedhere
$$

\end{proof}

To control the `s-wave' term with
$h^\pm S_1D_1S_1$ on the right hand side of \eqref{eq:resolvem}.  In particular, we need to consider terms of the
form
\begin{multline} \label{eq:gammaswave}
	\Gamma_1=[h^+(z)-h^-(z)] R_1 v^* S_1D_1S_1 v \mR_0^\pm, \,\,\,\, \\ \Gamma_2=
	h^+(z) R_1v^* S_1D_1S_1 v[\mR_0^+-\mR_0^-](z),\,\,\,\,\Gamma_3=h^+(z)R_4^+ v^* S_1D_1S_1 v \mR_0^+.
\end{multline}
\begin{prop}\label{prop:swave} The assertion of Proposition~\ref{prop:A} is valid for each $\Gamma_j$ defined in \eqref{eq:gammaswave}.  
\end{prop}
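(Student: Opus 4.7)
The plan is to emulate the integration-by-parts scheme of Propositions~\ref{prop:A} and \ref{prop:S}, now invoking the orthogonality
$S_1 v M_{11} = M_{11}v^* S_1 = 0$ (a consequence of $S_1\leq Q$ and $vM_{11}v^* = \|(a,c)\|_2^2 P$) to absorb the $\log z$ singularities that $h^\pm(z)$ and $\mathcal R_0^\pm(\lambda)$ each carry near $z=0$. The three $\Gamma_j$ in \eqref{eq:gammaswave} arise from the standard telescoping decomposition of the $s$-wave contribution to $\mathcal R_V^+-\mathcal R_V^-$ in Stone's formula.

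For $\Gamma_1$, the coefficient $h^+(z)-h^-(z) = 2m\|(a,c)\|_2^2\bigl(g^+(z)-g^-(z)\bigr) = im\|(a,c)\|_2^2$ is a $z$-independent constant, so only $\mathcal R_0^\pm$ carries a logarithm. Inserting $\mathcal R_0^\pm = 2mg^\pm M_{11} + \mathcal G_0 + E_0^\pm$ (Lemma~\ref{lem:R0exp}) and invoking $S_1 v M_{11}=0$ reduces $\Gamma_1$ to $\mathrm{const}\cdot R_1 v^* S_1D_1S_1 v(\mathcal G_0+E_0^\pm)$, which is log-free in $z$. After splitting $\mathcal R_0^\pm = \mathcal R_L^\pm+\mathcal R_H^\pm$ on the right, the argument now follows Proposition~\ref{prop:A} essentially verbatim: integration by parts in $z$ against the Stone phase for the low piece (using Lemma~\ref{lem:R1h1j} and the absolute boundedness of $S_1D_1S_1$), and Lemma~\ref{lem:high stat phase} for the high piece.

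For $\Gamma_2$, $h^+(z)\sim \log z$ is genuinely singular, but the right factor $\mathcal R_0^+-\mathcal R_0^- = im M_{11}+(E_0^+-E_0^-)$ again loses its $M_{11}$ part under $S_1 v$. The refined expansion $E_0^\pm = g_1^\pm \mathcal G_1 + z^2\mathcal G_2 + E_1^\pm$ together with $g_1^+-g_1^-=-iz^2/8$ yields $S_1 v(E_0^+-E_0^-) = \widetilde O(z^2)$ as a Hilbert--Schmidt-valued operator, with $z$-derivative $\widetilde O(z)$. Since $h^+(z)=\widetilde O(\log z)$ and $\partial_z h^+ = \widetilde O(1/z)$, both $h^+\cdot S_1 v(E_0^+-E_0^-)$ and its $z$-derivative lie in $L^1((0,z_0))$; integration by parts in $z$ then delivers the $1/\la t\ra$ bound as in Proposition~\ref{prop:A}.

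For $\Gamma_3$, the phase $e^{iz|x-x_1|}$ of $R_4^+$ permits a direct application of Lemma~\ref{lem:high stat phase} (no $z$-integration by parts is performed). After splitting $\mathcal R_0^+$ on the right and using $S_1vM_{11}=0$, the $E_0^+$ contribution pairs with $h^+(z)$ to give an amplitude of size $\widetilde O(z^{1/2-})$, which fits the hypotheses of Lemma~\ref{lem:high stat phase} immediately. The main obstacle is the $\mathcal G_0$ piece: $h^+(z)\mathcal G_0$ is only $\widetilde O(\log z)$ with derivative $\widetilde O(1/z)$, so the amplitude sits just outside the lemma's hypotheses. This is handled by rerunning the stationary-phase proof with a $|\log z|$ factor tracked through the three estimates \eqref{eqn:astat1}--\eqref{eqn:astat3}; since $\chi$ localizes $z$ to a small interval, each integral remains convergent with at most a bounded logarithmic loss, and the final $1/\la t\ra$ decay survives.
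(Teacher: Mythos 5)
Your overall strategy --- invoking $S_1 v M_{11}=0$ to absorb the $\log z$ coming from $h^\pm(z)$ and from the free resolvent --- is the right idea, and the $\Gamma_1$ argument is in the spirit of the paper's. But there are two genuine problems.

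For $\Gamma_2$, your claim that $S_1 v(E_0^+-E_0^-)=\widetilde O(z^2)$ ``as a Hilbert--Schmidt-valued operator'' is not a meaningful bound in the setting where it's needed. In $\Gamma_2$ the operator $[\mR_0^+-\mR_0^-]$ is paired with the $H^1$ atom $f$, not with $v^*$, so there is no $vEv^*$ structure to appeal to; what one actually needs is $v(y_1)[\mR_0^+-\mR_0^--imM_{11}]f$ in $L^2_{y_1}$ with usable $z$-dependence. The $\mathcal G_1$ piece of $E_0^\pm$ has kernel $\sim|y_1-y|^2$, so $v(y_1)\cdot|y_1-y|^2$ is not in $L^2$ under the assumption $\gamma>3$; moreover, the pointwise $O(z^2|x-y|^2)$ bound on $E_0^\pm$ holds only in the regime $z|x-y|<1$, not uniformly. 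The paper sidesteps all of this by working directly with $J_0(z|x-y|)$ (which is uniformly bounded): write $[\mR_0^+-\mR_0^-]=2mI_1J_0+(R_4^+-R_4^-)+\widetilde O_1(z)\chi J_0$, and on the first summand apply the orthogonality via the Schlag trick --- replace $J_0(z|y_1-y|)\chi$ with $J_0(z|y_1-y|)\chi - J_0(z\la y\ra)\chi$, which is the $z$-gaining $G,\widetilde G$ function of \cite{eg2}. This controls both the $z$ singularity and the spatial growth simultaneously, under the stated decay.

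For $\Gamma_3$, your ``log-tracking'' modification of Lemma~\ref{lem:high stat phase} is unnecessary, and masks the actual mechanism. The kernel $R_4^\pm$ already carries an extra factor of $z$: $\omega_1^\pm$ satisfies the same bounds as $z\omega_\pm$ (this is stated right after \eqref{eqn:R1R2R3}). Therefore the amplitude inherits $z\,h^+(z)=O(z\log z)$ from the pairing of $R_4^+$ with $h^+(z)$, which tames the logarithm without any change to the stationary-phase lemma. The paper's proof of $\Gamma_3$ is one sentence precisely because of this observation; your version rederives an unneeded logarithmic-loss variant and, in the process, drops the very factor of $z$ that makes the clean argument go through.
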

\begin{proof}
For $\Gamma_1$, note that $h^+(z)-h^-(z) =c$. Recalling Proposition~\ref{prop:A} for $\Gamma_2$ defined in \eqref{eq:gammaA}, it suffices to consider
the contribution of 
$$
R_1 v^* S_1D_1S_1 v \mR_L^+.
$$
Using \eqref{eq:R0exp}, we write
$$
 \mR_L^+(z)(x,y)=R_1-\frac{mI_1}{\pi}\log(z|x-y|)\chi(z|x-y|)  +\widetilde O_1(z^{3/2}(|x-y|^{3/2}+\log^- |x-y|)),
$$
we note that the contribution of the third summand follows from the analysis of $\Gamma_1$ in Proposition~\ref{prop:S}.
The contribution of the first summand is easier using Lemma~\ref{lem:R1h1j} for both $f$ and $g$. For the contribution of the second summand we need to use
the orthogonality $S_1vM_{11}=0$, which holds since $S_1\leq Q$. Let 
$$
F(z,y ,y_1)= -\frac{mI_1}{\pi}\log(z|y_1-y|)\chi(z|y_1-y|)+\frac{mI_1}{\pi}\log(z\la y\ra)\chi(z\la y\ra ).
$$
By the orthogonality $S_1vM_{11}=0$, and using Lemma~\ref{lem:R1h1j}, the contribution of the second summand to the left hand side of \eqref{eq:gammaibp} is given by
$$
 \left\|  \partial_z \left[\chi(z)   \int_{\R^6} [S_1D_1S_1](x_1,y_1) v(y_1) F(z,y_1,y) f(y) [g_1(x_1)+g_2(x_1) \widetilde O_1(z)](x_1) dx_1dy_1 dy \right]\right\|_{L^1_z},
$$
where $\|g_j\|_{L^2}\les 1$.
We have the following  bounds for $z\les z_0$ (see \cite{Sc2}, \cite[Lemma 3.3]{eg2}) 
\be\label{eq:Fbound}
|F(z,y_1,y)|\les \int_0^{z_0} |\partial_z F(z,y_1,y)| dz+ |F(0+,y_1,y)|\les k_2(y_1,y),
\ee
where $k_2(y_1,y):=1+\log^+(|y_1|)+\log^-(|y-y_1|)$. Therefore, we can estimate the integral above by
\begin{multline*}
\int_{\R^6} |[S_1D_1S_1](x_1,y_1)| |v(y_1) k_2(y_1,y)| |f(y)| (|g_1(x_1)|+|g_2(x_1)|)   dx_1dy_1 dy \\
\leq \left[ \sup_y \|v(\cdot) k_2(\cdot,y)\|_{L^2} \right] \||S_1D_1S_1|\|_{L^2\to L^2}  \| |g_1(x_1)|+|g_2(x_1)|\|_{L^2} \|f\|_{L^1}\les 1. 
\end{multline*}
The contribution of $\Gamma_3$ can be handled as in Proposition~\ref{prop:A} since the additional $z$ factor in $R_4$ kills the logarithm coming from $h(z)$.

For the contribution of $\Gamma_2$, we write
\begin{multline*}
[\mR_0^+-\mR_0^-](z)(x,y)=  [-i\alpha \cdot \nabla + m \beta +\sqrt{m^2+z^2} I] J_0(z|x-y|) \\ = 2mI_1J_0 + (R_4^+-R_4^-) + \widetilde O_1(z) \chi(z|x-y|) J_0.
\end{multline*}
The contribution of the last two summand is similar to the cases above. The contribution of the first summand can be handled using the orthogonality property as above and as in \cite{eg2}; the functions $G, \widetilde G$ from \cite{eg2} which have an additional factor of $z$ replace the function $F$ above.  The rest of the analysis is identical to the one above for low energies and to the analysis of the terms containing $R_4$ for the high energies. 
 \end{proof}
 
We are now ready to prove Theorem~\ref{thm:low energy}.  

\begin{proof}[Proof of Theorem~\ref{thm:low energy}]

	Using the expansion for the perturbed resolvents given in \eqref{eq:resolvem}.  The first term is controlled by the
	bounds for the evolution of the free resolvent in
	Theorem~\ref{thm:free high}, specifically \eqref{eqn:freeDirac low}.   Propositions~\ref{prop:A}, \ref{prop:S}, \ref{prop:Erlowlow} control the contribution of the operators $A$, $S-SS_1D_1S_1S$ and $E^\pm(z)$ respectively.  This establishes the theorem in the case when the threshold $\lambda=m$ is regular. If there is a resonance of the first kind, that is an  s-wave  resonance at $\lambda=m$, we bound the additional
	$h_\pm(z)S_1D_1S_1$ term with Proposition~\ref{prop:swave}. 
	\end{proof}

\section{High energy dispersive estimates}\label{sec:high}

We now seek to bound the perturbed Dirac evolution at energies separated from the threshold.    In particular, we show

\begin{prop}\label{prop:high energy}
	Under Assumption~\ref{assumption},
	the following bound holds for any $H^1\times H^1(\R^2) $ atoms $f$ and $g$.
	\be \label{eqn: 2j hig}
		\bigg|\int_0^\infty e^{-it\sqrt{z^2+m^2}}\frac{z}{\sqrt{z^2+m^2}} \chi_j(z)  \left\la [\mathcal R_V^{+}(\lambda)- \mathcal R_V^{-}(\lambda)]f,g\right\ra 
		  dz\bigg|\\ \les   \min(2^{2j},   2^{7j/2} |t|^{-1}).
	\ee
	provided the components of $V$ satisfy the bound
	$|V_{ij}(x)|\les \la x\ra^{-2-}$.

\end{prop}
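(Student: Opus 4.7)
The approach is to apply the symmetric resolvent identity \eqref{symmresid} and combine the high-frequency expansion \eqref{R0large} of the free resolvent with the limiting absorption principle from Assumption~\ref{assumption}(iii). Writing $\lambda = \sqrt{z^2+m^2}$, we have $\mR_V^\pm(\lambda) = \mR_0^\pm(\lambda) - \mR_0^\pm(\lambda)v^* M^\pm(z)^{-1} v\mR_0^\pm(\lambda)$, so the integral in \eqref{eqn: 2j hig} splits into a free part and a perturbed correction. The contribution of the free part is bounded by $\min(2^{2j}, 2^{2j}|t|^{-1})$ via Theorem~\ref{thm:free high} applied to $H^1$ atoms (or directly via Lemma~\ref{lem:high stat phase2}), which is already sharper than required. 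The rest of the argument is devoted to the perturbed correction.

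The first step is to upgrade Assumption~\ref{assumption}(iii) to a corresponding bound on $M^\pm(z)^{-1}$. Using the reverse of the symmetric resolvent identity, namely $M^\pm(z)^{-1} = U - Uv\mR_V^\pm(\lambda)v^*U$, together with the hypothesis $|V_{ij}(x)|\les \la x\ra^{-2-}$ (so $|v(x)|\les\la x\ra^{-1-}$), I would deduce
$$ \sup_{z\gtrsim 1} \|\partial_z^k M^\pm(z)^{-1}\|_{L^{2,\sigma}\to L^{2,-\sigma}}\les 1,\qquad \sigma>\tfrac12+k,\,\,k=0,1, $$
since multiplication by $v$ absorbs the required weights $\la x\ra^{-\sigma}$ for $\sigma\le \tfrac32+$.

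Next, I would decompose each free resolvent as $\mR_0^\pm = \mR_L^\pm+\mR_H^\pm$ via \eqref{eq:RLRH}. The local piece $\mR_L^\pm$ (supported on $z|x-y|\les 1$) is handled by direct weighted $L^2$ estimates, since at high $z \approx 2^j \gtrsim 1$ the logarithmic behavior of the Schr\"odinger resolvent is harmless and the kernel is essentially a bounded function on a ball of radius $z^{-1}$, producing extra $z$-decay. The dominant piece is the doubly-oscillatory pairing involving $\mR_H^\pm(x,x_1)=e^{\pm iz|x-x_1|}\widetilde\omega_\pm(z(x-x_1))$ and $\mR_H^\pm(y_1,y)=e^{\pm iz|y-y_1|}\widetilde\omega_\pm(z(y-y_1))$. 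Pairing against $H^1$ atoms $f,g$ and applying Fubini yields an oscillatory integral in $z$ with phase $e^{-it\sqrt{z^2+m^2}\pm iz|x-x_1|\pm iz|y-y_1|}$ and amplitude $\frac{z\chi_j(z)}{\sqrt{z^2+m^2}}\widetilde\omega_\pm(z(x-x_1))\widetilde\omega_\pm(z(y-y_1))$ weighted in the interior variables by $v(x_1)M^\pm(z)^{-1}(x_1,y_1)v(y_1)$. A Cauchy--Schwarz estimate in $(x_1,y_1)$ combined with the LAP bound on $M^\pm(z)^{-1}$ (and its $z$-derivative), using the decay of $v$, reduces matters to a one-dimensional oscillatory integral in $z$ whose amplitude satisfies a two-distance analog of the hypotheses of Lemma~\ref{lem:high stat phase2}, with $\phi_\pm(z)=\sqrt{z^2+m^2}\mp z(r_1+r_2)/t$ in place of $\phi_\pm$ there. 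The trivial $2^{2j}$ bound then follows from estimating the amplitude in absolute value and integrating over the support of $\chi_j$, while the $2^{7j/2}|t|^{-1}$ bound follows from an integration by parts in $z$, with the factor $2^{7j/2}=2^{2j}\cdot 2^{3j/2}$ accumulating as in the proof of Lemma~\ref{lem:high stat phase2}, tracking the two $z$-factors coming from the operator $-i\alpha\cdot\nabla+m\beta+\lambda I$ acting on each of the Schr\"odinger resolvents.

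The main obstacle is the bookkeeping of powers of $z\sim 2^j$ coming from the two Dirac resolvents, the derivative of $M^\pm(z)^{-1}$, and the amplitudes $\widetilde\omega_\pm$, to ensure that after integrating out the spatial variables against $v$ and the atoms, the resulting one-dimensional oscillatory integral has amplitude and first derivative fitting the stationary-phase hypotheses with precisely the claimed gain $2^{7j/2}|t|^{-1}$, with no further polynomial loss in $z$. The mixed and local contributions involving $\mR_L^\pm$ require separate but easier arguments, since the local pieces carry extra $z$-decay and do not contribute oscillation.
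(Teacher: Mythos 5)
There is a genuine gap in the proposed route, located precisely at the step where you transfer the limiting absorption principle from $\mR_V^\pm$ to $M^\pm(z)^{-1}$. You correctly observe the identity $M^\pm(z)^{-1}=U-Uv\mR_V^\pm(\lambda)v^*U$, but then you claim that the weights required by Assumption~\ref{assumption}(iii) ``are absorbed by $v$ for $\sigma\le\tfrac32+$''. This is false under the hypothesis of the proposition: if $|V_{ij}(x)|\les\la x\ra^{-2-}$, then $|v_{ij}(x)|\les\la x\ra^{-1-}$, so multiplication by $v$ or $v^*$ only maps $L^{2}$ into $L^{2,\sigma}$ (equivalently $L^{2,-\sigma}$ into $L^{2}$) for $\sigma<1+$. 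For the $k=1$ bound, i.e.\ for $\partial_zM^\pm(z)^{-1}=-Uv\,(\partial_z\mR_V^\pm)\,v^*U$, Assumption~\ref{assumption}(iii) requires $\sigma>\tfrac32$, which $v$ cannot provide. Consequently the derivative bound
$\sup_{z\gtrsim1}\|\partial_z M^\pm(z)^{-1}\|_{L^2\to L^2}\les1$
does not follow from your argument, and the integration by parts that is supposed to produce the $|t|^{-1}$ decay breaks down.

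The paper avoids exactly this issue by working with the finite Born expansion \eqref{born tail},
$\mR_V^\pm=\mR_0^\pm-\mR_0^\pm V\mR_0^\pm+\mR_0^\pm V\mR_V^\pm V\mR_0^\pm$,
rather than the symmetric resolvent identity. In the tail term $\mR_0^\pm V\mR_V^\pm V\mR_0^\pm$ the perturbed resolvent is flanked by full copies of $V$, which decay like $\la x\ra^{-2-}$ and therefore absorb any weight $\sigma<2$; this is enough for the $k=1$ case $\sigma>\tfrac32$. The extra Born term $\mR_0^\pm V\mR_0^\pm$ is then handled separately by direct kernel estimates and the stationary phase lemma (Lemma~\ref{lem:1highgamma}), which is the additional piece your decomposition does not produce but which is needed because only one power of $V$ appears there. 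If you wish to salvage the $M^\pm(z)^{-1}$ route, you would need to assume the stronger decay $|V_{ij}|\les\la x\ra^{-3-}$ (so $|v_{ij}|\les\la x\ra^{-3/2-}$), which is more than the proposition claims; alternatively, one can split off $U$ from $M^\pm(z)^{-1}$ and iterate, which essentially reconstructs the Born expansion the paper uses. The stationary-phase part of your sketch, while compressed, is in the right spirit and matches the strategy in the paper's proof of Lemma~\ref{lem:2highgamma}.
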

 
As in the low-energy part of the evolution, we use the Hardy space $H^1$ in place of the Lebesgue space $L^1$. One can prove such bounds with $L^1$, though it requires further iteration of the Born series, which requires more complicated computations and loss of more derivatives on the initial data than presented here.   

The following lemma from \cite{EG1} will be useful to control the spatial integrals that arise in our frequency-localized bounds.
\begin{lemma}\label{EG:Lem}

	Fix $u_1,u_2\in\R^n$ and let $0\leq k,\ell<n$, 
	$\beta>0$, $k+\ell+\beta\geq n$, $k+\ell\neq n$.
	We have
	$$
		\int_{\R^n} \frac{\la z_1\ra^{-\beta-}}
		{|z_1-u_1|^k|z_1-u_2|^\ell}\, dz_1
		\les \left\{\begin{array}{ll}
		(\frac{1}{|u_1-u_2|})^{\max(0,k+\ell-n)}
		& |u_1-u_2|\leq 1,\\
		\big(\frac{1}{|u_1-u_2|}\big)^{\min(k,\ell,
		k+\ell+\beta-n)} & |u_1-u_2|>1.
		\end{array}
		\right.
	$$
\end{lemma}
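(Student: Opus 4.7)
Let me outline my plan for proving this integral bound.

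Set $d := |u_1-u_2|$. The plan is to split $\R^n$ into the three natural regions
\[
A_1 = \{|z_1-u_1| \leq d/2\}, \quad A_2 = \{|z_1-u_2| \leq d/2\}, \quad A_3 = \R^n \setminus (A_1 \cup A_2),
\]
and estimate the contribution of each region using elementary volume computations and dyadic sums. By the triangle inequality, on $A_1$ one has $|z_1-u_2| \approx d$, on $A_2$ one has $|z_1-u_1| \approx d$, and on $A_3$ both factors are $\geq d/2$ with $|z_1-u_1| \approx |z_1-u_2|$ whenever $|z_1-u_1| \gtrsim d$.

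First I would handle the case $d \leq 1$, where the weight $\la z_1\ra^{-\beta-} \leq 1$ plays no essential role near the singularities. On $A_1$, using $k<n$,
\begin{equation*}
\int_{A_1} \frac{\la z_1\ra^{-\beta-}}{|z_1-u_1|^k|z_1-u_2|^\ell}\,dz_1 \les d^{-\ell}\int_{|w|\leq d/2} |w|^{-k}\,dw \les d^{n-k-\ell},
\end{equation*}
and symmetrically on $A_2$. For $A_3$ I would perform a dyadic decomposition $|z_1-u_1| \approx 2^j$ over $2^j \geq d/2$, noting that on each shell $|z_1-u_1| \approx |z_1-u_2|$, so the integrand is $\les 2^{-j(k+\ell)}\la z_1\ra^{-\beta-}$. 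The shells with $2^j \les 1$ sum to $\les d^{n-k-\ell}$ when $k+\ell>n$ and to an absolute constant when $k+\ell<n$ (this is exactly where $k+\ell \neq n$ is needed, to rule out a $\log(1/d)$ divergence); the shells with $2^j \gtrsim 1$ are summable in $j$ by the weight and the hypothesis $k+\ell+\beta \geq n$. Combining gives the stated bound $d^{-\max(0,k+\ell-n)}$.

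Next I would treat $d > 1$, where the weight is essential. Here it is cleaner to decompose by $\la z_1\ra \approx 2^j$, $j \geq 0$. For shells with $2^j \ll d$, at most one of $|z_1-u_i|$ is smaller than $d/2$, while the other is $\approx d$; integrating the near Riesz factor over such a shell using $k<n$ (resp.\ $\ell < n$) and summing over $2^j \les d$ against the weight $2^{-j(\beta+)}$ yields $\les \min(d^{-k},d^{-\ell})$. For shells with $2^j \gtrsim d$, one has $|z_1-u_i|\approx 2^j$ for both $i$, so each shell contributes $\les 2^{j(n-k-\ell-\beta-)}$; since $k+\ell+\beta \geq n$, this geometric sum is dominated by its first term $d^{-(k+\ell+\beta-n)-}$. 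Taking the minimum of the three mechanisms produces the exponent $\min(k,\ell,k+\ell+\beta-n)$.

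The main obstacle I anticipate is the careful bookkeeping between parameter regimes in the case $d > 1$: one must track whether $|u_1|$ or $|u_2|$ is small, comparable to, or larger than $d$, since this determines whether the weight supplies decay near the singularities at $u_i$ or only in the far field. The three-way minimum in the final exponent is precisely the record of which of these three mechanisms dominates—proximity to $u_1$, proximity to $u_2$, or far-field integration—and the hypothesis $k+\ell\neq n$ eliminates the borderline logarithmic blow-up in the near-singularity estimates.
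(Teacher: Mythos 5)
The paper does not actually prove this lemma here; it is quoted verbatim from \cite{EG1}, so there is no in-text proof to compare against. On the merits, your overall strategy (split near the two singularities plus far field, dyadic sums, treat $|u_1-u_2|\le 1$ and $>1$ separately) is the right one, and the $|u_1-u_2|\le 1$ case as you describe it works: the only thing one needs to notice is that $k+\ell\ne n$ rules out the logarithm in the shell sum on the intermediate scales $d/2\lesssim 2^j\lesssim 1$, exactly as you say.

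There is, however, a genuine slip in the $|u_1-u_2|=d>1$ case. You claim that the shells with $2^j\lesssim d$ contribute $\lesssim\min(d^{-k},d^{-\ell})=d^{-\max(k,\ell)}$. That is not correct and would not survive a computation. Take $u_2$ at the origin and $u_1$ at distance $d$ with $n<\ell+\beta$: then $\int_{|z_1|<1}\langle z_1\rangle^{-\beta-}|z_1-u_1|^{-k}|z_1-u_2|^{-\ell}\,dz_1\approx d^{-k}$, and if $k<\ell$ this is strictly larger than $d^{-\max(k,\ell)}$. The correct statement is that the near-singularity shells contribute at worst $\max(d^{-k},d^{-\ell})=d^{-\min(k,\ell)}$, with the worst case attained when the point carrying the \emph{larger} Riesz exponent sits at the origin, where the weight gives no help (and the other point is at distance $\approx d$ where the weight already supplies $d^{-\beta-}$, yielding the third exponent $k+\ell+\beta-n$). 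With this corrected, the total bound becomes $d^{-\min(k,\ell)}+d^{-(k+\ell+\beta-n)-}\lesssim d^{-\min(k,\ell,k+\ell+\beta-n)}$, which is what the lemma asserts. A second, smaller point: the claim that on shells $\langle z_1\rangle\approx 2^j\gtrsim d$ one has $|z_1-u_i|\approx 2^j$ is not literally true when $|u_i|\approx 2^j$ (the shell then passes through $u_i$); you should instead note that since $k,\ell<n$ the local singularities are integrable, so integrating $|z_1-u_i|^{-k}|z_1-u_j|^{-\ell}$ over a shell of radius and thickness $\approx 2^j$ still gives $\lesssim 2^{j(n-k-\ell)}$, after which the weight produces the convergent geometric sum in $j$. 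With these two fixes your outline yields the stated bound.
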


We begin by employing the resolvent expansion
\begin{align}
    \mathcal R_V^{\pm}(\lambda) =  \mathcal R_0^{\pm}(\lambda)- \mathcal R_0^{\pm}(\lambda) V\mathcal R_0^{\pm}(\lambda)  +\mathcal R_0^{\pm}(\lambda)  V\mathcal R_V^{\pm} (\lambda) V    \mathcal R_0^{\pm}(\lambda).
    \label{born tail}
\end{align}
We already discussed the required bounds for the contribution of the free resolvent in Theorem~\ref{thm:free high}. 
We now consider the contribution of the second term in \eqref{born tail}. 
 Using the estimates in the previous sections, see \eqref{eq:RLRH}, 
\eqref{eqn:R1R2R3}, and the discussion preceding Lemma~\ref{lem:R1h1j}, we have (for $z\gtrsim 1$,  $\lambda=\sqrt{z^2+m^2}$)
\begin{align}\label{eqRL}
\mR_L^\pm(z)(x,y)&= \chi(z|x-y|)\left(\frac{i\alpha\cdot(x-y)}{2\pi|x-y|^2}+\widetilde O_1(z(z|x-y|)^{0-})\right),\\
\mR_H^\pm(z)(x,y)&= e^{\pm i z|x-y|}\widetilde w_\pm(z|x-y|), \nn\\
\frac{\mR_L^+(z)(x,y)-\mR_L^-(z)(x,y)}{\sqrt{z^2+m^2}}&=\widetilde O_1(z/\sqrt{z^2+m^2}).\nn
\end{align}
By symmetry, it suffices to consider the contributions of 
$$\Gamma_1:=(\mR^+_L-\mR^-_L)V\mR_L^+,\,\,\,\,\,\Gamma_2:=\mR^+_LV\mR^+_H,\,\,\,\,\,\,\Gamma_3:=\mR^+_HV\mR^+_H $$
to the Stone's  formula \eqref{eqn: 2j hig}.
\begin{lemma}\label{lem:1highgamma}
	The following bound holds for each $k=1, 2, 3$
	\begin{align}\label{eqn: 2j higGamma}
		\bigg|\int_0^\infty e^{-it\sqrt{z^2+m^2}}\frac{z}{\sqrt{z^2+m^2}} \chi_j(z)\Gamma_k
	  \, dz\bigg| \les  	  \min(2^{2j}, 2^{2j}|t|^{-1/2}, 2^{5j/2} |t|^{-1})
	\end{align}
	provided the components of $V$ satisfy the bound
	$|V_{ij}(x)|\les \la x\ra^{-2-}$.
\end{lemma}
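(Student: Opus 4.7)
The plan is to handle the three cases $\Gamma_1,\Gamma_2,\Gamma_3$ in turn. For each one, the trivial bound $2^{2j}$ follows by taking absolute values in the $z$--integral: $z\chi_j(z)/\sqrt{z^2+m^2}\les \chi_j(z)$ integrates to $\sim 2^j$, every oscillating factor is pointwise bounded by $1$, and the crude pointwise estimates on $\mR_0^\pm(z)(x,y)$ from Lemma~\ref{lem:R0exp} and \eqref{R0large} combined with $|V_{ij}|\les\la\cdot\ra^{-2-}$ and $\|f\|_{L^1},\|g\|_{L^1}\le 1$ make the spatial integrals convergent. The time--decaying bounds are more delicate and require separate arguments for each case.

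For $\Gamma_3=\mR_H^+V\mR_H^+$ both resolvents produce oscillatory kernels $e^{\pm iz r_k}\widetilde\omega_\pm(zr_k)$ with $r_1=|x-x_1|$ and $r_2=|y-y_1|$. Combining these and the spectral weight, the $z$--integral becomes, pointwise in $(x,x_1,y,y_1)$,
\begin{equation*}
\int_0^\infty e^{-it\sqrt{z^2+m^2}\pm iz(r_1+r_2)}\,\frac{z\chi_j(z)}{\sqrt{z^2+m^2}}\,\widetilde\omega_\pm(zr_1)\widetilde\omega_\pm(zr_2)\,dz.
\end{equation*}
I would apply the dyadic rescaling and stationary--phase argument of Lemma~\ref{lem:high stat phase2} with $r$ replaced by $r_1+r_2$. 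The extra $\widetilde\omega$ factor contributes an additional $(1+zr_2)^{-1/2}$ in the amplitude which, at the critical point $z_0\approx r/t$, evaluates to $(2^j t)^{-1/2}$, costing an extra $2^{j/2}$ over Lemma~\ref{lem:high stat phase2} and yielding the bound $\min(2^{2j},2^{2j}|t|^{-1/2},2^{5j/2}|t|^{-1})$ on the inner integral. Spatial integration in $x_1,y_1$ then converges by pairing $|V|\les\la\cdot\ra^{-2-}$ against the weak singularities $r_1^{-1/2}r_2^{-1/2}$ via Lemma~\ref{EG:Lem}, and integration in $x,y$ uses $\|f\|_{L^1},\|g\|_{L^1}\le 1$.

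For $\Gamma_2=\mR_L^+V\mR_H^+$ and $\Gamma_1=(\mR_L^+-\mR_L^-)V\mR_L^+$, the low kernel $\mR_L^+$ carries the Riesz--type singularity $i\alpha\cdot(x-y)/(2\pi|x-y|^2)$ from \eqref{eqRL}, which is unbounded from $L^1$ to $L^2$ but bounded from $H^1$ to $L^2$. The proof of Lemma~\ref{lem:R1h1j} depends only on the $z$--independence of this leading kernel and hence applies at any frequency: I write $vR_1g=g_1+g_2\widetilde O_1(z)$ with $\|g_k\|_{L^2}\les 1$. For $\Gamma_2$ the remaining $z$--integral is a single--resolvent oscillatory integral to which Lemma~\ref{lem:high stat phase2} applies with $r=|x_1-y|$, yielding $\min(2^{2j},2^{3j/2}|t|^{-1/2},2^{2j}|t|^{-1})$, well within the claimed bound; the spatial integrals are controlled by Cauchy--Schwarz against $\|g_k\|_{L^2}$, $\|V\|_{L^2}$, and $\|f\|_{L^1}$. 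For $\Gamma_1$, the factor $(\mR_L^+-\mR_L^-)/\sqrt{z^2+m^2}=\widetilde O_1(z/\sqrt{z^2+m^2})$ is smooth in $z$ with no spatial singularity, so the $|t|^{-1}$ bound follows from a single integration by parts in $z$ (using $|\partial_z\chi_j|\les 2^{-j}$ and that $z$--differentiating the remaining factors costs at most $2^{-j}$ at frequency $2^j$); the $|t|^{-1/2}$ bound follows by interpolation with the trivial bound.

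The main obstacle is the careful bookkeeping of powers of $2^j$ in $\Gamma_3$: each additional $\widetilde\omega$ factor contributes an extra $2^{j/2}$ loss through its amplitude evaluated at the critical point of the phase, and it is precisely this loss that upgrades the free--evolution bounds of Lemma~\ref{lem:high stat phase2} ($2^{3j/2}|t|^{-1/2}$ and $2^{2j}|t|^{-1}$) to the $2^{2j}|t|^{-1/2}$ and $2^{5j/2}|t|^{-1}$ of the present lemma. Without the $H^1$ hypothesis on $f$ and $g$, one would need to iterate the Born series once more in $\Gamma_1$ and $\Gamma_2$ to smooth out the Riesz singularity, at the cost of further decay on $V$ and more derivatives on the initial data.
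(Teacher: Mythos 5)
Your overall decomposition matches the paper, and the trivial $2^{2j}$ bound and the $\Gamma_1$ integration-by-parts argument are fine. The gaps are in $\Gamma_3$ and in the unnecessary use of the $H^1$ machinery for $\Gamma_1,\Gamma_2$.

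For $\Gamma_3$, your mechanism for extracting the $2^{j/2}$ loss does not hold up. The amplitude is $\chi_j(z)\widetilde O_1(z^2)\widetilde\omega(zr_1)\widetilde\omega(zr_2)$, carrying an extra factor of $z\approx 2^j$ (from the second $-i\alpha\cdot\nabla$) compared with the hypotheses of Lemma~\ref{lem:high stat phase2}. Bounding $(1+zr_1)^{-1/2}(1+zr_2)^{-1/2}$ merely by $(1+z(r_1+r_2))^{-1/2}$ and applying the lemma would cost a full factor of $2^j$, not $2^{j/2}$. Your claim that ``the extra $\widetilde\omega$ factor \ldots at the critical point evaluates to $(2^j t)^{-1/2}$'' is not justified: at the critical point one only has $r_1+r_2\approx t$, and the smaller of the two distances $s=\min(r_1,r_2)$ can be as small as $\approx 2^{-j}$, in which case $(1+z_0 s)^{-1/2}\approx 1$. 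Moreover, you cannot simultaneously use both factors $r_1^{-1/2}r_2^{-1/2}$ for the spatial integral against $V$: if you do, the $(1+zr)^{-1/2}$ amplitude decay needed for the stationary-phase estimate is gone. The paper's actual mechanism is to write, with $r=\max(r_1,r_2)$ and $s=\min(r_1,r_2)$,
$$
\widetilde\omega(zr_1)\,\widetilde\omega(zr_2)\ \les\ \frac{\widetilde\chi(zr)}{(1+zr)^{1/2}}\cdot \frac{1}{(zs)^{1/2}},
$$
so that the extra $z^2$ becomes $z^{3/2}\cdot s^{-1/2}$; one then factors out $z^{1/2}s^{-1/2}\approx 2^{j/2}/s^{1/2}$, applies Lemma~\ref{lem:high stat phase2} with $r\approx r+s$ to the remaining amplitude $\frac{z\chi_j(z)\widetilde\chi(zr)}{(1+zr)^{1/2}}$, and pairs \emph{only} $s^{-1/2}$ with $V$ via Lemma~\ref{EG:Lem}. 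That is where the $2^{j/2}$ comes from. (Note also that $\Gamma_3=\mR_H^+V\mR_H^+$ has a single intermediate variable $u$, not two variables $x_1,y_1$; you appear to be conflating this with the two-sided Born tail of Lemma~\ref{lem:2highgamma}.)

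For $\Gamma_1$ and $\Gamma_2$, your recourse to Lemma~\ref{lem:R1h1j} and the $H^1$-to-$L^2$ mapping of the Riesz kernel is not what the paper does, and it creates a new problem: the decomposition $vR_1 g=g_1+g_2\widetilde O_1(z)$ has a second summand of size $z\approx 2^j$, and carrying that through the $\Gamma_2$ oscillatory integral overshoots the claimed bound by a factor $2^j$ unless one is more careful. The paper avoids $H^1$ entirely in this lemma: in $\Gamma_1$ and $\Gamma_2$ the Riesz-type singularity of $\mR_L^\pm$ always sits at the intermediate variable $u$, so one simply bounds $|\mR_L^\pm(z)(x,u)|\les \chi(z|x-u|)/|x-u|$, performs the stationary-phase estimate (or direct integration by parts for $\Gamma_1$) pointwise in $(x,u,y)$, and then controls $\int |V(u)|/|u-y|\,du\les 1$ (resp.\ $\int |V(u)|/|x-u|\,du\les 1$) by Lemma~\ref{EG:Lem}. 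This gives an $L^1\to L^\infty$ bound on the kernel; the $H^1\to BMO$ formulation is only needed in Lemma~\ref{lem:2highgamma}, where $\mR_V$ sits between the outer resolvents.
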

\begin{proof}
For $\Gamma_1$, we need to consider an integral which can be
written as
$$
\bigg| \int_{\R^2} \int_0^\infty e^{-it\sqrt{z^2+m^2}}  \chi_j(z) \chi(z|x-u|)V(u) \chi(z|u-y|) \widetilde O_1(z /|u-y|) 
	  \, dz du  \bigg|.
$$
We can bound the integral by
$$
\min(2^{2j},2^j/t)\int_{\R^2}  \frac{|V(u)|}{|u-y|} du \les \min(2^{2j},2^j/t).
$$
To obtain this we estimated the $z$ integral by ignoring the phase, and by an integration by parts as before. The
$u$ integral is clearly bounded by Lemma~\ref{EG:Lem}.

For $\Gamma_2$ and $\Gamma_3$, we note that direct integration implies the bound $2^{2j}$ as above. To obtain time decay we employ Lemma~\ref{lem:high stat phase2} to
  the oscillatory integral that the phase(s) in
$\mR_H(z)$ provide.  We estimate $\Gamma_3$ only, $\Gamma_2$ is bounded similarly with a smaller power of $2^j$.  With
$\phi_\pm(z)=\sqrt{z^2+m^2}\mp z(|x-u|+|u-y|)/t$,
we consider
$$
	  \int_{\R^2} \int_0^\infty e^{-it\phi_\pm (z)}  \chi_j(z)
	\widetilde O_1(z^2) \widetilde \omega_+(z|x-u|) 
	\widetilde \omega_+(z|u-y|) \, dz\, du.
$$
Define $r:=\max(|x-u|,|u-y|)$ and $s:=\min(|x-u|,|u-y|)$,
we then rewrite the integrand as
$$
	e^{-it\phi_+(z)}\frac{z \chi_j(z)\widetilde \chi (zr)}{(1+zr)^{\f12}} \frac{z^{\f12}}{s^{\f12}}.
$$
Since $r\approx r+s$, and $z^{\f12}\approx 2^{j/2}$, we
apply Lemma~\ref{lem:high stat phase2} to bound the
$z$ integral with
\begin{align*}
	2^{j/2} \min(2^{2j}, 2^{\frac{3j}2}|t|^{-1/2}, 2^{2j} |t|^{-1}) \int_{\R^2} \frac{|V(u)|}{|u-y|^{\f12}}\, du.
\end{align*}
Here, without loss of generality, we took $s=|u-y|$.
The $u$ integral is bounded by the decay of $V$ and
Lemma~\ref{EG:Lem}.  The case of $k=2$ varies only in that
the final integrand is bounded by
$$
	\min(2^{2j}, 2^{\frac{3j}2}|t|^{-1/2}, 2^{2j} |t|^{-1}) \int_{\R^2} \frac{|V(u)|}{|u-y|}\, du.  \qedhere
$$
\end{proof}

The following lemma finishes the proof of Proposition~\ref{prop:high energy}.
\begin{lemma}\label{lem:2highgamma}
	The following bound holds for any $H^1\times H^1(\R^2)$ atoms $f$ and $g$.
	\be \label{eqn: 2j hightail}
		\bigg|\int_0^\infty e^{-it\sqrt{z^2+m^2}}\frac{z\chi_j(z)}{\sqrt{z^2+m^2}} \left\la  \mathcal R_0^{\pm}(\lambda)V\mathcal R_V^{\pm} (\lambda) V    \mathcal R_0^{\pm}(\lambda) f, g \right\ra  
	  \, dz\bigg|    \les  	  \min(2^{2j},   2^{7j/2} |t|^{-1}).
	\ee
	provided the components of $V$ satisfy the bound
	$|V_{ij}(x)|\les \la x\ra^{-2-}$.
\end{lemma}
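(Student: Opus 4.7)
I would prove Lemma~\ref{lem:2highgamma} by combining the limiting absorption principle with stationary-phase analysis for the outer free resolvents. Factor $V = v^*v$ with $|v_{ij}(x)|\lesssim \langle x\rangle^{-1-}$. Under Assumption~\ref{assumption}(iii), since $V$ provides a weight shift of $2+$ on each side of $\mathcal R_V^\pm$, both $V\mathcal R_V^\pm(\lambda)V$ and $V\partial_z \mathcal R_V^\pm(\lambda)V$ are uniformly $L^2$-bounded for $z\approx 2^j$: the former from the $k=0$ LAP with $\sigma=2+>\tfrac12$, the latter from the $k=1$ LAP with $\sigma=2+>\tfrac32$, using $\partial_z = \tfrac{z}{\sqrt{z^2+m^2}}\,\partial_\lambda$ and $z/\sqrt{z^2+m^2}\sim 1$ for $z\gtrsim m$. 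Split each outer $\mathcal R_0^\pm = \mathcal R_L^\pm + \mathcal R_H^\pm$ as in \eqref{eq:RLRH}, reducing by symmetry to three pieces: $LL$, $LH$, and $HH$.

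For the direct bound $2^{2j}$, estimate the integrand pointwise in $z$: by Cauchy--Schwarz with the $L^2$-operator norm of $V\mathcal R_V^\pm V$,
\begin{equation*}
|\langle \mathcal R_0^\pm V\mathcal R_V^\pm V\mathcal R_0^\pm f,g\rangle|\lesssim \|\mathcal R_0^\pm(z)f\|_{L^2_{-\sigma}}\|\mathcal R_0^\pm(z)g\|_{L^2_{-\sigma}},\qquad \sigma\in(1/2,1+).
\end{equation*}
For $H^1$-atoms $f,g$, control the $\mathcal R_L$ piece via (the $z$-independent) $R_1$-part from Lemma~\ref{lem:R1h1j} and the $\mathcal R_H$ piece via the bound $|\widetilde w_\pm(zr)|\lesssim z(1+zr)^{-1/2}$ combined with Cauchy--Schwarz and Lemma~\ref{EG:Lem}; one obtains $\|\mathcal R_0^\pm(z)f\|_{L^2_{-\sigma}}\lesssim 2^{j/2}$. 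Integrating over $z\approx 2^j$ (length $2^j$) yields $2^j\cdot (2^{j/2})^2=2^{2j}$.

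For the decay bound $2^{7j/2}|t|^{-1}$, handle each piece. The $HH$ piece has two oscillating outer kernels $e^{\pm izr_1}$, $e^{\pm izr_2}$ which combine with $e^{-it\sqrt{z^2+m^2}}$ to give phase $\sqrt{z^2+m^2}\mp z(r_1+r_2)/t$; apply Lemma~\ref{lem:high stat phase2} to the $z$-integral with $a(z)$ incorporating the $z^2$ from the two $\widetilde w_\pm$'s (split as in the proof of Lemma~\ref{lem:1highgamma} into $z^{1/2}/s^{1/2}$ times a Lemma~\ref{lem:high stat phase2}-compatible factor), treating the middle $V\mathcal R_V V$ by pulling its bilinear pairing outside and invoking its $L^2$-operator norm. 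The spatial integrals are controlled by Lemma~\ref{EG:Lem}, yielding the bound $2^{7j/2}|t|^{-1}$. The $LH$ piece is handled by the same lemma applied to the single outer phase, paired with Lemma~\ref{lem:R1h1j} on the $\mathcal R_L$ side. The $LL$ piece, having no outer oscillation, is treated by one integration by parts in $z$ using $|\partial_z\sqrt{z^2+m^2}|\gtrsim 1$; the differentiated amplitude either falls on the prefactors and cutoffs (harmless), on $\mathcal R_L$ (gaining a factor of $r\lesssim 2^{-j}$), or on $\mathcal R_V$ (controlled by the $L^2$-bound on $V\partial_z\mathcal R_V V$ established above).

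\textbf{Main obstacle.} The technical core is the $HH$ piece: producing the exact power $2^{7j/2}$ requires carefully balancing (i) the stationary-phase output of Lemma~\ref{lem:high stat phase2}, (ii) the extra $z^2$ from the two outer $\mathcal R_H$'s, (iii) the $L^2\!\to\!L^2$ operator norm of $V\mathcal R_V V$ used to decouple the middle from the spatial integrals, and (iv) the asymmetric $r/s$ split from the two outer distances. Additionally, one must verify that the decay $|V_{ij}|\lesssim\langle x\rangle^{-2-}$ is sharp for the $LL$ piece, since the integration by parts there forces the $k=1$ LAP bound with $\sigma>\tfrac32$ — which requires weight shift exactly $2+$ on each side of $\mathcal R_V$, exactly what $V\lesssim\langle x\rangle^{-2-}$ provides.
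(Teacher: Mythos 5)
Your overall framework — $L^2$-boundedness of $V\mathcal R_V V$ and $V\partial_z\mathcal R_V V$ from the LAP, the $LL/LH/HH$ split, integration by parts for $LL$, stationary phase for the pieces with an outgoing $\mathcal R_H$ — matches the paper's, and your weight-counting for the LAP is correct. The gap is in the $HH$ piece, which you yourself flag as the core difficulty: you propose to reduce it to Lemma~\ref{lem:high stat phase2}, but the amplitude there does not satisfy that lemma's hypotheses. Lemma~\ref{lem:high stat phase2} requires $|\partial_z a|\lesssim |a|/z$ (a $z^{-1}$ gain upon differentiation) together with a factor $(1+zr)^{-1/2}$ in a \emph{single} distance $r$. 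For $\mathcal R_H V\mathcal R_V V\mathcal R_H$ the amplitude contains $\mathcal R_V(\sqrt{z^2+m^2})$, whose $z$-derivative is merely uniformly bounded (via the $k=1$ LAP) and gains no power of $z^{-1}$; moreover there are two intermediate points $u_1,u_2$ and two distances $|x-u_1|$, $|u_2-y|$, so there is no single $(1+zr)^{-1/2}$ structure to invoke. Pulling the $L^2$-operator norm of $V\mathcal R_V V$ outside also doesn't help: it destroys the very $z$-dependence whose derivative you then need to control.

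What the paper does instead is bypass Lemma~\ref{lem:high stat phase2} and go back to the raw stationary-phase estimate Lemma~\ref{stat phase}. Concretely: write the phase as $\phi(z)=2^{3j}\big(\sqrt{z^2+m^2}-z(|x|+|y|)/t\big)$ (so $\phi''\approx 1$ after rescaling), absorb the residual spatial phases $e^{iz(|x-u_1|-|x|)}$ and $e^{iz(|u_2-y|-|y|)}$ into the amplitude $a(z,x,y)$, and use Lemma~\ref{EG:Lem} plus the $k=0,1$ LAP (the $\partial_z$ hitting a spatial phase brings down $|u_i|$, which the $\langle u\rangle^{-2-}$ decay of $V$ absorbs) to get the crude but sufficient bound $|a|+|\partial_z a|\lesssim 2^j\chi_j(z)\langle x\rangle^{-1/2}\langle y\rangle^{-1/2}$ — note in particular there is no $z^{-1}$ gain on $\partial_z a$. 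Feeding these bounds into Lemma~\ref{stat phase} with $t2^{-3j}$ in place of $t$, and using that the critical point lies in the support only when $t\approx|x|+|y|$ (so $\langle x\rangle^{-1/2}\langle y\rangle^{-1/2}\lesssim t^{-1/2}$), produces exactly $2^{7j/2}|t|^{-1}$; away from the critical point a single integration by parts gives $2^{2j}/t$. You should replace the appeal to Lemma~\ref{lem:high stat phase2} in the $HH$ piece with this direct Lemma~\ref{stat phase} computation, since otherwise the argument as written does not close.
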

\begin{proof}
In this proof we consider only the case $t>0$, and the '$+$' terms, and drop the superscripts. By symmetry, it suffices to consider the contributions of the following to \eqref{eqn: 2j hightail}:
$$
\Gamma_1:=\mR_L V\mR_VV\mR_L,\,\,\,\,\,\Gamma_2:=\mR_L V\mR_VV\mR_H,\,\,\,\,\,\Gamma_3:=\mR_H V\mR_VV\mR_H.
$$
Consider the contribution of $\Gamma_1$. 
We rewrite $\mR_L$, see \eqref{eqRL}, as follows:
\begin{align*}
\mR_L(z)(x,y) &=  \frac{i\alpha\cdot(x-y)}{2\pi|x-y|^2}+\widetilde{\chi}(z|x-y|) \widetilde  O(|x-y|^{-1}) +\chi(z|x-y|)\widetilde O_1(z(z|x-y|)^{0-})\\
&=\frac{i\alpha\cdot(x-y)}{2\pi|x-y|^2}+ \widetilde O_1(z^{1/2}|x-y|^{-1/2})=:\frac{i\alpha\cdot(x-y)}{2\pi|x-y|^2}+\mR_{L1}.
\end{align*}
Using Theorem~1 in \cite{dinglu}, the first summand above maps $H^1$ to $L^2$.
Thus, we can write
$$
\la \Gamma_1 f,g\ra= \la  \mR_V V\widetilde f, V\widetilde g\ra +  \la \mR_V V  \mR_{L1} f,V\widetilde g\ra + \la  V \widetilde f, \mR_V V  \mR_{L1}  g\ra +
\la \mR_{L1} V\mR_V V \mR_{L1} f,    g\ra,
$$
where $\widetilde{f}, \widetilde g \in L^2\times L^2$. 
Therefore, by limiting absorption principle we have:
\begin{multline*}
 |\la \Gamma_1 f,g\ra| \les \|V\widetilde g \|_{L^2_\sigma}  \|V\widetilde f \|_{L^2_\sigma} + z^{1/2} \|V\widetilde g \|_{L^2_\sigma} \|f\|_{L^1} \sup_u \left\|\frac{V(\cdot)}{|\cdot-u|^{1/2}}\right\|_{L^2_\sigma}\\ + z^{1/2} \|V\widetilde f\|_{L^2_\sigma} \|g\|_{L^1} \sup_u \left\|\frac{V(\cdot)}{|\cdot-u|^{1/2}}\right\|_{L^2_\sigma}+ z \|f\|_{L^1}\|g\|_{L^1} \sup_u \left\|\frac{V(\cdot)}{|\cdot-u|^{1/2}}\right\|_{L^2_\sigma}^2   \les z.
\end{multline*}
Using this bound we estimate the contribution of $\Gamma_1$ to \eqref{eqn: 2j hightail} by $2^{2j}$. The same bound holds for $\Gamma_2$ and $\Gamma_3$. 
We also have 
$$
|\partial_z\la \Gamma_1 f,g\ra|    \les z,
$$
since the worst terms are the ones when the derivative hits $\mR_V$. Using this bound after an integration by parts we   estimate the contribution of $\Gamma_1$ to \eqref{eqn: 2j hightail} by $2^{2j}/t$. 

For $\Gamma_3$,  it suffices to estimate   
$$
	\sup_{x,y}\bigg|\int_0^\infty e^{-i 2^{-3j} t \phi(z)}   a(z,x,y) dz\bigg|, $$
where
$$
\phi(z)= 2^{3j}\left(\sqrt{z^2+m^2}- z(|x|+|y|)/t\right),
$$
and (with $r=|x-u_1|$, $s=|u_1-y|$)
$$
a(z,x,y)=  \int_{\R^4} \chi_j(z)
	\widetilde O_1(z^2) e^{iz(r-|x|)} \widetilde \omega(zr) [V\mR_VV](u_1,u_2)
	e^{iz(s-|y|)} \widetilde \omega(zs)  \,  du_1\,d u_2.
$$
Note that in the support of $a$, we have $\phi^{\prime\prime} \approx 1$. Also note that, using Lemma~\ref{EG:Lem} and the limiting absorption principle, we have
$$
|a(z,x,y)|+|\partial_z a(z,x,y)|\les 2^j \chi_j(z) \la x\ra^{-1/2} \la y\ra^{-1/2}.
$$
Therefore by Lemma~\ref{stat phase}, we estimate the integral above by
$$
\int_{|z-z_0|<\sqrt{2^{3j}/t }} |a(z)|\, dz 
    		+t^{-1} 2^{3j} \int_{|z-z_0|>\sqrt{2^{3j}/t }} \bigg( \frac{|a(z)|}{|z-z_0|^2}+
    		\frac{|a'(z)|}{|z-z_0|}\bigg)\, dz,
$$
where $z_0=m\frac{|x|+|y|}{\sqrt{t^2-(|x|+|y|)^2}}$. In the case when $z_0$ is in a small neighborhood of the support of $a$ we must have  $t\approx |x|+|y|$. Therefore, in this case, we have the bound
$$
2^j \la x\ra^{-1/2} \la y\ra^{-1/2} \left(\sqrt{2^{3j}/t }+t^{-1} 2^{3j} \frac{2^j}{\sqrt{2^{3j}/t }}\right)
\les 2^{7j/2}/t.
$$
In the case $t\not \approx |x|+|y|$, we have 
$$
\left|\partial_z\left(\sqrt{z^2+m^2}- z(|x|+|y|)/t\right)\right| \gtrsim 1.
$$
An integration by parts together with the bounds on $a$ imply that the integral is bounded by
$2^{2j}/t$. The proof for $\Gamma_2$ is similar to the cases considered above.
\end{proof}

\section{Classification of threshold resonances}\label{sec:resonanceclass}

In this section we provide a full characterization of threshold obstructions.  We classify distributional solutions to $H\psi =m\psi$ and relate them to the spectral subspaces and terms that arise in the inversion of the operators $M^\pm (z)=U+v\mR_0^\pm \left(\sqrt{m^2+z^2}\right)v^*, \,\,\,\,0<z\ll 1.$   We start with the absolute boundedness of the operator $QD_0Q$ in the case $S_1=0$. In the case $S_1\neq 0$, the proof is similar.
\begin{lemma}\label{lem:QDQ abs}

	If $|v_{ij}(x)|\les \la x\ra^{-1-}$ then 
	$QD_0Q$ is an absolutely bounded operator.

\end{lemma}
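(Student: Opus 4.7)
My plan is to establish absolute boundedness of $QD_0Q$ in two stages. First, show that $QTQ=QUQ+Qv\mG_0v^*Q$ is itself absolutely bounded by estimating the integral kernel of $v\mG_0v^*$ directly. Second, transfer absolute boundedness to the inverse $D_0$ via a Fredholm iteration of the resolvent identity, following the strategy developed for the two-dimensional Schr\"odinger operator in \cite[Lemma~8]{Sc2} and \cite[Lemma~2.6]{eg2}.

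For the first stage, using the definition $\mG_0=-i\alpha\cdot\nabla G_0+2mG_0 I_1$ and the explicit kernel of $G_0$ from \eqref{G0 def}, one obtains the pointwise bound
\[
|\mG_0(x,y)|\les \f{1}{|x-y|}+\log^-|x-y|+\log\la x-y\ra.
\]
Combined with $|v_{ij}(x)|\les \la x\ra^{-1-}$ and the elementary estimate $\log\la x-y\ra\les \log\la x\ra+\log\la y\ra$, the absolute value of the kernel of $v\mG_0v^*$ is majorized by $\la x\ra^{-1-}\la y\ra^{-1-}\big(|x-y|^{-1}+\log^-|x-y|+1\big)$, and Schur's test (or Lemma~\ref{EG:Lem} with $n=2$, $k=1$, $\ell=0$, $\beta>1$) shows this defines a bounded operator on $L^2\times L^2(\R^2)$. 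Since the rank-one projection $P$ has Hilbert-Schmidt kernel under the decay assumption on $v$, the absolute kernels of $Q=1-P$, $QUQ$, and hence $QTQ$ all define bounded operators on $L^2\times L^2(\R^2)$.

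For the second stage, I would observe that $v\mG_0v^*$ is in fact compact on $L^2\times L^2(\R^2)$ (its $|x-y|^{-1}$ singularity admits Hilbert-Schmidt approximation after truncation at a small scale), and that $QTQ$ is invertible on $Q(L^2\times L^2)$ by the regularity hypothesis that the kernel is trivial (so $S_1=0$). Rewriting $(QTQ)D_0=Q$ as the fixed-point identity
\[
D_0=(QUQ)^{-1}Q-(QUQ)^{-1}(Qv\mG_0v^*Q)\,D_0,
\]
I would iterate this to express $D_0$ as a finite sum of compositions of absolutely bounded operators, plus a remainder controlled by the compactness of $Qv\mG_0v^*Q$. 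The principal obstacle is that composition with a merely bounded operator does not in general preserve absolute boundedness, so one must arrange the iteration so that each factor is itself absolutely bounded; this can be achieved by alternating with the adjoint identity and exploiting self-adjointness of $Q$, $U$, and $v\mG_0v^*$, or equivalently by invoking the explicit Fredholm inversion formula adapted from \cite[Lemma~8]{Sc2}, which directly yields an absolutely bounded kernel representation for $D_0$ in terms of the (finite-rank-perturbed involution) $QUQ$ and the absolutely bounded operator $Qv\mG_0v^*Q$.
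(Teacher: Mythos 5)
Your high-level strategy (explicit inverse of the leading piece of $QTQ$, then transfer to $D_0$ via resolvent identity) is the right one and is indeed what the paper does, but there are two concrete gaps.

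First, your fixed-point identity $D_0=(QUQ)^{-1}Q-(QUQ)^{-1}(Qv\mG_0v^*Q)D_0$ presupposes that $QUQ$ is invertible on $Q(L^2\times L^2)$. This is false in general: the paper shows that $\ker_{Q(L^2\times L^2)}(QUQ)$ is trivial if and only if $\int_{\R^2}V_{11}(y)\,dy\neq 0$, and when $\int V_{11}=0$ the kernel is one-dimensional, spanned by $Uv(1,0)^T$. Note that regularity of $\lambda=m$ (i.e.\ $S_1=0$) is about the kernel of $QTQ$, not of $QUQ$, so it gives you no information about invertibility of $QUQ$. The paper handles the degenerate case by adding the Riesz projection $\pi_0$ onto $\ker QUQ$, writing down an explicit absolutely bounded inverse $A=(QUQ+\pi_0)^{-1}$, and then using the resolvent identity with $v\mG_0v^*-\pi_0$ in place of $v\mG_0v^*$. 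Your proposal skips this case entirely.

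Second, the mechanism by which the $D_0$-containing remainder becomes absolutely bounded is not what you describe. Compactness of $Qv\mG_0v^*Q$ does not help: a bounded operator composed with a compact one is merely compact, not absolutely bounded, so "a remainder controlled by the compactness of $Qv\mG_0v^*Q$" does not close the argument. Likewise, one iteration is not enough because $v\mG_0v^*$ is not Hilbert--Schmidt (it has a local $|x-y|^{-1}$ singularity, see Remark~\ref{rmk:resonances}~vi). The paper iterates the resolvent identity exactly twice,
\[
D_0=A-A(v\mG_0v^*)A+D_0\,(v\mG_0v^*)A(v\mG_0v^*)A,
\]
and the point is that the composition $(v\mG_0v^*)A(v\mG_0v^*)$ \emph{is} Hilbert--Schmidt, via the convolution estimate $\int\la w\ra^{-2-}|x-w|^{-1}|w-y|^{-1}\,dw\les 1+\log^-|x-y|$ and the explicit rank-one-plus-multiplication structure of $A$. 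Since $D_0$ is $L^2$-bounded and a bounded operator composed with a Hilbert--Schmidt one is again Hilbert--Schmidt (hence absolutely bounded), the last term is absolutely bounded. Your appeal to "alternating with the adjoint identity" or to self-adjointness does not substitute for this smoothing step; without the second iteration and the Hilbert--Schmidt observation the argument does not close.
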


\begin{proof}
This is similar to the proof Lemma 8 in \cite{Sc2}.
	Assume that $0\neq f\in Q(L^2\times L^2)$ with
	$QUf=0$.  Then $Uf\in  \, \text{span}(a,c)^T$, which can
	be expressed as $f=CUv(1,0)^T$ for some $C\neq 0$.
	Then the assumption $Pf=M_{11}v^*f=0$ and the identity $V=v^*Uv$ imply that
	\begin{align*}
		0= M_{11}v^*Uv(1,0)^T= M_{11}V(1,0)^T  = \left(\int_{\R^2} V_{11}(y)\, dy \right)
		(1,0)^T,
	\end{align*}
	where $V_{11}$ is the top left component of the matrix
	potential $V$. Since this argument can be reversed, we have shown that
	\begin{align*}
		\textrm{ker}_{Q(L^2\times L^2)}(QUQ)=\{0\}\qquad
		\textrm{if and only if} \qquad
		\int_{\R^2}V_{11}(y)\, dy\neq 0.
	\end{align*}
	Moreover, if $\int_{\R^2}V_{11}(y)\, dy= 0$, then the kernel is the span of the vector $Uv(1,0)^T$.
	Also note, using  $V=v^*Uv$, that 
	$$
		V_{11}(y)=\textrm{sign}(\lambda_1)|a|^2(y)+
		\textrm{sign}(\lambda_2)|c|^2(y).
	$$
  
	We consider first the case in which $\int_{\R^2}V_{11}(y) \, dy\neq 0$.  In this case,
	$QUQ$ is an invertible operator on $Q(L^2\times L^2)$. To see this,  
	for any $g\in  L^2\times L^2 $ with $Qg=g$,  
	define
	$$
		f=Ag:=Ug+c_0Uv(1,0)^T \qquad \textrm{with}
		\qquad c_0=-\frac{\la Ug,v (1,0)^T\ra}{\int_{\R^2} V_{11}(y)\, dy}.
	$$
	Note that
	$$Pf= P(Ug)+c_0P(Uv(1,0)^T)=\frac{(a,c)^T}{\|(a,c)\|_2^2} \Big(\la Ug,v(1,0)^T\ra +c_0   \int_{\R^2}
		V_{11}(y)\, dy\Big)=0, 
	$$
	by the definition of $c_0$. Therefore  $Qf=f$. Also note that 
	$$
	QUQf=QUf=QU(Ug+c_0 Uv(1,0)^T) = g+c_0Qv(1,0)^T=g,
	$$
	since $Qv(1,0)^T=0$. Therefore, the operator $A $ is the inverse of $QUQ$, and it is easy to see using the explicit formula that $A$ is absolutely bounded on $Q(L^2\times L^2)$.
    
    Using resolvent identity twice, we can write 
    $$
    D_0=(Q(U+v\mG_0 v^*)Q)^{-1}=  A-A(v\mathcal G_0 v^*)A\\
		+D_0 (v\mathcal G_0 v^*)A
		(v\mathcal G_0 v^*)A.	
    $$
	The first term has already been shown to be
	absolutely bounded.  The second term, recalling
	\eqref{def:mG0},  is the sum of a
	Hilbert-Schmidt operator, $2mvG_0 I_1v^*$ and an operator $-iv\alpha \cdot \nabla G_0 v^*$, which 
	is bounded in absolute value by $|v|\, \mathcal I_1\, |v^*|$ with $\mathcal I_1$ a fractional integral operator.  Recall that $\mathcal I_1: 
	L^{2,\sigma}\to L^{2,-\sigma}$ for $\sigma>1$, see e.g. Lemma~2.3 of \cite{Jen}. Therefore the second term is also absolutely bounded. Since the composition of a bounded operator with an Hilbert-Schmidt operator is Hilbert-Schmidt, and hence absolutely bounded, for the third term it suffices to prove that $(v\mathcal G_0 v^*)A
		(v\mathcal G_0 v^*)$ is Hilbert-Schmidt. This follows from the explicit formula for $A$,  the inequalities 
 \begin{align*}
 &\int_{\R^2} \frac{\la w\ra^{-2-}}{|x-w||w-y|} d w    \les 1+  \log^-|x-y|, \\ 
 &\int_{\R^4} \la x\ra^{-2-} (1+  \log^-|x-y|)^2 \la y\ra^{-2-} dx dy   <\infty, 
 \end{align*}
and similar inequalities involving less singular integrands.  

We now consider the case in which $\int_{\R^2}V_{11}(y) \, dy = 0$. In this case $0$ is an isolated point of the spectrum of $QUQ$ whose essential spectrum is contained in $\{\pm 1\}$. Let $\pi_0$ be the Riesz projection onto the kernel of $QUQ$. By the calculation in the beginning of the proof, we have
$$
\pi_0(f)=\frac{\la f,Uv(1,0)^T\ra}{\|(a,c)^T\|_2^2} Uv(1,0)^T.
$$
Similarly, for $g\in Q(L^2\times L^2)$,
$$
 f=Ag:=Ug+c_1\big(v(1,0)^T-Uv(1,0)^T\big),\,\,\,c_1=-\frac{\la Ug,v(1,0)^T\ra}{\|(a,c)^T\|_2^2},
$$
satisfies $Qf=f$ and $(QUQ+\pi_0)f =g$. By the explicit formula, $A=(QUQ+\pi_0)^{-1}$ is absolutely bounded. The previous argument yields the claim by writing 
$$
D_0=(Q(U+v\mG_0 v^*)Q)^{-1}=  A-A(v\mathcal G_0 v^*-\pi_0)A\\
		+D_0 (v\mathcal G_0 v^*-\pi_0)A
		(v\mathcal G_0 v^*-\pi_0)A.	\qedhere
$$

%	To see that $QUQ$ is absolutely bounded we note that
%	for any $g\in (L^2\times L^2)$ with $Qg=g$, if we	define
%	$$
%		f=Ug+c_0Uv(1,0)^T \qquad \textrm{with}		\qquad c_0=-\frac{\la Ug,v (1,0)^T\ra}{\int_{\R^2} V_{11}(y)\, dy}	$$
%	Noting that
%	\begin{align*}
%		PUv(1,0)^T= \frac{(a,c)^T}{\|(a,c)\|_2^2} \int_{\R^2}
%		\textrm{sign}(\lambda_1)a^2(y)+
%		\textrm{sign}(\lambda_2)|c|^2(y)\, dy= \frac{(a,c)^T}{\|(a,c)\|_2^2} \int_{\R^2}
%		V_{11}(y)\, dy,
%	\end{align*}
%	and
%	\begin{align*}
%		PUg=\frac{(a,c)^T}{\|(a,c)\|_2^2} \int_{\R^2}
%		\textrm{sign}(\lambda_1)a(y)g_1(y)+
%		\textrm{sign}(\lambda_2)\overline{c}(y)g_2(y)\, dy
%		=\frac{(a,c)^T}{\|(a,c)\|_2^2} \la Ug,v(1,0)^T\ra 
%	\end{align*}
%	which implies that
%	\begin{align*}
%		QUPUg+c_0QUPUv(1,0)^T=0
%	\end{align*}
%	Thus,
%	\begin{align*}
%		QUQf&=QU(I-P)Ug+c_0 QU(I-P)Uv(1,0)^T\\
%		&=Qg-QUPUg+c_0Qv(1,0)^T-c_0QUPUv(1,0)^T=Qg.
%	\end{align*}
%	Where the last equality holds since $Qv(1,0)^T=0$.
%	A similar calculation shows that $Qf=f$.  The 
%	explicit formula shows that $(QUQ)^{-1}$ is  absolutely bounded on $Q(L^2\times L^2)$.  
	 
\end{proof}

\begin{lemma}\label{lem: S1}
	
	If $|v_{ij}(x)|\les \la x\ra^{-1-}$ and $\phi \in S_1(L^2\times L^2) =  \text{Ker} \, (QTQ)$ then $\phi=Uv\psi$ where 
	$$
		\psi = c_0 (1,0)^T -\mathcal G_0 v^* \phi, \qquad
		c_0=\frac{\la (a,c)^T,T\phi\ra }{\| (a,c)^T\|_2^2}.
	$$
	Moreover, $\psi\in L^\infty \times L^\infty$, and it is a distributional solution of $(H-mI)\psi=0$.
\end{lemma}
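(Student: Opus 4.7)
The plan is first to translate the two constraints $\phi \in Q(L^2\times L^2)$ and $QTQ\phi = 0$ into algebraic identities. From $Q=I-P$ and the formula $vM_{11}v^* = \|(a,c)\|_2^2 P$ in \eqref{vM11v}, the orthogonality $P\phi=0$ is equivalent to $\langle \phi,(a,c)^T\rangle=0$, i.e.\ $M_{11}v^*\phi=0$, so that the first component of $v^*\phi$ integrates to zero. Next, $QT\phi=0$ together with $\phi=Q\phi$ forces $T\phi=PT\phi$; since $P$ is the rank-one projection onto the span of $(a,c)^T$, this gives $T\phi = c_0(a,c)^T$ with the stated constant $c_0$. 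Writing $T=U+v\mathcal{G}_0v^*$ and using the identity $(a,c)^T=v(1,0)^T$ from \eqref{eq:vabcd}, I solve for $U\phi$ and apply $U^{-1}=U$ to arrive at $\phi=Uv\psi$ with $\psi = c_0(1,0)^T-\mathcal{G}_0 v^*\phi$, as required.

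For the distributional equation $(H-m)\psi=0$, the key observation is the factorization
\[
\mathcal{G}_0 \;=\; [-i\alpha\cdot\nabla + m(\beta+I)]\,G_0 \;=\; (D_m+m)\,G_0,
\]
where I used $2mI_1 = m(\beta+I)$ applied to the definition \eqref{def:mG0}. Combined with \eqref{dirac_schro_free}, this yields $(D_m-m)\mathcal{G}_0 = (D_m^2-m^2)G_0 = -\Delta G_0 = I$ in the sense of distributions. Since $\beta(1,0)^T=(1,0)^T$, the constant vector $(1,0)^T$ is annihilated by $D_m-m$, and therefore $(D_m-m)\psi = -v^*\phi$. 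Finally $\phi=Uv\psi$ together with $V=v^*Uv$ gives $v^*\phi = V\psi$, yielding $(D_m+V-m)\psi = 0$ distributionally.

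It remains to show $\psi\in L^\infty\times L^\infty$, which is the main technical step. The constant piece is harmless, so I focus on $\mathcal{G}_0 v^*\phi$ and split $\mathcal{G}_0 = -i\alpha\cdot\nabla G_0 + 2mG_0 I_1$. The Riesz-type contribution has kernel $\sim (x-y)/|x-y|^2$; Cauchy--Schwarz near the singularity combined with the $\la x\ra^{-1-}$ decay of the entries of $v$ at infinity produces a bounded output. For the logarithmic part, the zero-integral condition $\int (v^*\phi)_1 = 0$ lets me rewrite
\[
G_0(v^*\phi)_1(x) \;=\; -\frac{1}{2\pi}\int\bigl[\log|x-y|-\log\la x\ra\bigr](v^*\phi)_1(y)\,dy,
\]
where the bracket is bounded logarithmically near $y=0$ and is $O(|y|/|x|)$ for $|y|\ll|x|$, giving an $L^\infty$ bound against the weighted $L^1$-norm of $v^*\phi$. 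I expect the main obstacle to be this last step: organizing the cancellation carefully enough to beat the logarithmic growth of $G_0$ while simultaneously controlling the Riesz contribution uniformly, which is precisely where the hypothesis $|v_{ij}(x)|\lesssim \la x\ra^{-1-}$ is consumed.
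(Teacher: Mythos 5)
Your algebraic derivation of $\phi = Uv\psi$ and the identification $T\phi = c_0(a,c)^T$ are correct and mirror the paper's argument. Your treatment of the distributional equation is also correct, and the factorization $\mathcal{G}_0 = (D_m + mI)G_0$ (valid since $2mI_1 = m(\beta+I)$ commutes through the scalar operator $G_0$) combined with $D_m^2 - m^2 = -\Delta$ is a slicker route than the paper's direct component-by-component computation of $(D_m-mI)\mathcal{G}_0 v^*\phi = v^*\phi$; the paper instead expands everything and invokes $I_2 I_1 = 0$ and $\alpha_j I_1 = I_2\alpha_j$ to see the cancellation.

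However, there is a genuine gap in the boundedness argument for the Riesz contribution $i\alpha\cdot\nabla G_0 v^*\phi$. Cauchy--Schwarz near the singularity does \emph{not} close: you would need
\[
\int_{|x-y|<1}\frac{1}{|x-y|}\,|v^*(y)\phi(y)|\,dy \;\lesssim\; \|v^*\phi\|_{L^2}\Big(\int_{|x-y|<1}\frac{dy}{|x-y|^2}\Big)^{1/2},
\]
but the weight integral diverges logarithmically in $\R^2$. Equivalently, the fractional integral operator $\mathcal{I}_1$ does not map $L^2(\R^2)$ to $L^\infty$; it only maps $L^p$ to $L^q$ with $\tfrac1q=\tfrac1p-\tfrac12$, which gives $L^{\infty-}$ as $p\to 2^-$, not $L^\infty$. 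The paper gets around this with a bootstrap: it first feeds the representation $\phi = iUv\,\alpha\cdot\nabla G_0 v^*\phi - 2mUvG_0 I_1 v^*\phi + c_0 Uv(1,0)^T$ through the Hardy--Littlewood--Sobolev bound $\mathcal I_1\colon L^p\to L^q$ (combined with H\"older and the decay of $v$) to conclude $\phi\in L^{\infty-}\times L^{\infty-}$, and only then, using $v^*\phi\in L^{2+}$, does it run H\"older with a conjugate exponent $q'<2$ to make $\int_{|x-y|<1}|x-y|^{-q'}\,dy$ finite. Without this intermediate step, the assertion $\psi\in L^\infty\times L^\infty$ is not justified. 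Your treatment of the logarithmic piece $G_0 I_1 v^*\phi$ via the cancellation $M_{11}v^*\phi=0$ is the right idea, though to make it uniform you should also record the bound $|\log(|x-y|/|x|)|\lesssim 1+\log\langle y\rangle+\log^-|x-y|$ valid for $|x|>4$, which also handles the regime $|y|\gtrsim|x|$ and $y$ near $x$ that the asymptotic $O(|y|/|x|)$ misses.
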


\begin{proof}
	
	Assume that $\phi \in \text{Ker} \, (QTQ)$. Then  
	$Q\phi=\phi$ and $PT\phi=c_0v(1,0)^T$ by definition of the projection $P$. We have
	\begin{align*}
		0&=QTQ\phi=(1-P)TQ\phi=T\phi-PT\phi=U\phi+v\mathcal G_0 v^*\phi-c_0v(1,0)^T.
	\end{align*}
	Multiplying
	the above expression by $U$ on the left, we arrive at
	\begin{align*}
		\phi&=-Uv\mathcal G_0 v^* \phi +c_0Uv(1,0)^T:=Uv \psi,
	\end{align*}
	where
	$$
	\psi=-\mathcal G_0 v^* \phi +c_0(1,0)^T.
	$$
	We now prove that $(H-mI)\psi=0$. Noting that $(D_m-mI) (1,0)^T=0$, we have
	\begin{multline}\label{eqn:mresonance}
		(H-mI)\psi=(D_m-mI)\psi +V\psi =(D_m-mI)\psi +v^*Uv\psi \\
		= (D_m-mI)(-\mathcal G_0v^* \phi+c_0 (1,0)^T) + v^*\phi 
		= - (D_m-mI) \mathcal G_0v^* \phi  + v^*\phi.
	\end{multline}
	Therefore it suffices to prove that $(D_m-mI) \mathcal G_0v^* \phi=v^*\phi$. We compute using the definition of $\mG_0$, and the identities $(-i\alpha \cdot \nabla)^2=-\Delta$ and $I_2I_1=0$ that 
	\begin{multline}\label{eqn:mresonance1}
		(D_m-mI) \mathcal G_0v^* \phi =(-i\alpha \cdot \nabla-2mI_2) (-i\alpha \cdot \nabla G_0 +2m G_0 I_1) v^* \phi 
		\\= (-\Delta G_0 -2mi \alpha \cdot \nabla G_0 I_1 +2mi  I_2 \alpha \cdot \nabla G_0) v^*\phi \\
		=  v^* \phi - 2mi (\alpha I_1-  I_2 \alpha) \cdot \nabla G_0   v^*\phi = v^*\phi.
	\end{multline}
	In the third equality we used  $G_0=(-\Delta)^{-1}$, and $|v_{ij}(x)|\les \la x\ra^{-1-}$, and the last equality follows from  $\alpha_jI_1=I_2\alpha_j$, $j=1,2.$ This proves that $(H-mI)\psi=0$.
	
	We now prove that $\psi$ is bounded.  Writing
	\begin{align}\label{def:psi}
		\psi=-\mathcal G_0 v^* \phi +c_0(1,0)^T= i\alpha \cdot \nabla G_0 v^* \phi -2mG_0 I_1v^* \phi +c_0(1,0)^T,
	\end{align}
	we only need to show that the first and second summands are in $ L^\infty \times L^\infty$. Consider the second term. The boundedness is clear
	on $B(0,4)$.
	Then, using $M_{11} v^* \phi =0 $ (see part i) of Remark~\ref{rmk:resonances}),
	we can write
	\begin{multline} \label{eq:G0trick}
		 [G_0 I_1 v^* \phi](x) = G_0 I_1v^* \phi +\frac1{2\pi} \log|x| M_{11} v^*\phi \\ =-\frac{1}{2\pi} I_1 \int_{\R^2}
		[\log|x-y|-\log |x|] (v^*\phi)(y)\, dy.
	\end{multline}
	The bound follows by using the inequality 
	\begin{align}	\label{logtrick}
		\Big|\log\Big(\frac{|x-y|}{|x|}\Big)\big|\les  1+\log(\la y\ra)+\log^-(|x-y|),\,\,\,\,|x|>4,
	\end{align}
	and  the bound  $|v_{ij}(x)|\lesssim \la x\ra^{-1-}$.
	
	To see the boundedness of the first term on the right hand side of \eqref{def:psi}, recall that
	$$
	\phi =-Uv\mathcal G_0 v^* \phi +c_0Uv(1,0)^T= iUv \alpha \cdot \nabla G_0v^* \phi -2m UvG_0 I_1 v^* \phi+c_0Uv(1,0)^T.
  $$
Note that if $f\in L^2(\R^2)$,   for any $p\in (1,2)$ and 
 $q\in(2,\infty)$, $\frac1q=\frac1p-\frac12,$ we have 
$$
\Big\|\int \frac1{|x-y|}\la y\ra^{-1-} |f(y)|dy \Big\|_{L^q} \les \|\mathcal I_1\|_{L^p\to L^q} \|\la \cdot \ra^{-1-}f\|_{L^p}\les 
\|\la \cdot \ra^{-1-}\|_{L^q} \|f\|_{L^2}\les \|f\|_{L^2}
$$  
by Lemma~\ref{EG:Lem}, the $L^{p}\to L^q$ boundedness of the fractional integral operator $\mathcal I_1$ in $\R^2$ and H\"older's inequality (since  $\frac1p=\frac1q+\frac12$ and $q>2$).  This implies using the bound on $v$ that the first summand in the definition of $\phi$ is in $L^{\infty-} \times L^{\infty-} $. 
By $L^{\infty-}$ we mean $L^q$ for arbitrary large, but finite, $q$.
The second summand is bounded by the argument above. Therefore
$\phi\in (L^{\infty-}\times L^{\infty-}) \cap (L^2\times L^2)$. The boundedness of the first term in the definition of $\psi$ follows from this using that
\begin{multline*} 
\int \frac1{|x-y|}\la y\ra^{-1-} |f(y)|dy \leq \\\|f\|_{L^{\infty-}} \Big[\int_{|x-y|<1} \frac1{|x-y|^{1+}} dy\Big]^{1-} + \|f\|_{L^2} \Big[ \int_{|x-y|>1}  \la  y\ra^{-2-} dy\Big]^{1/2}\\
\les \|f\|_{L^{\infty-}}   + \|f\|_{L^2}.  \qedhere
	\end{multline*}
\end{proof}

The following lemma provides more detailed information on $S_1$, however it requires more decay from the potential $V$. 
\begin{lemma}\label{lem:moreonS1} Assume that $|v_{ij}(x)|\les \la x\ra^{-2-}$. Let $\phi=Uv\psi \in S_1(L^2\times L^2)$. We have 
$$
		\psi = c_0 (1,0)^T +\Gamma_1+ \Gamma_2,		 
$$
where 
$$
\Gamma_1=-\frac{mx}{\pi\la x\ra^2}\cdot  \int_{\R^2}  y I_1 v^*(y)\phi(y)\, dy - \frac{i}{2\pi}\alpha \cdot \frac{x}{\la x \ra^2} M_{22} v^* \phi,
$$
and $\Gamma_2\in L^p\times L^p, $ for any $p\in [2,\infty]$. 
In particular, $\psi- c_0(1,0)^T\in L^p\times L^p$ for any $2<p\leq \infty$.
	
\end{lemma}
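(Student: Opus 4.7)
The plan is to start from Lemma~\ref{lem: S1}, which already gives
$$
\psi(x)=c_0(1,0)^T-\mG_0 v^*\phi(x)=c_0(1,0)^T+i\alpha\cdot\nabla G_0 v^*\phi(x)-2mG_0 I_1 v^*\phi(x),
$$
and to extract the leading large-$|x|$ asymptotic of each of the last two summands, using the orthogonality $M_{11}v^*\phi=0$ (which holds since $\phi\in S_1\leq Q$) to kill the worst contributions. The strengthened decay $|v_{ij}(x)|\les\la x\ra^{-2-}$, together with $\phi\in L^\infty\cap L^2$ from Lemma~\ref{lem: S1}, yields $|v^*\phi(y)|\les\la y\ra^{-2-}$ so that the vector moments $\int y\,I_1 v^*\phi\,dy$ and $\int I_2 v^*\phi\,dy$ converge absolutely; under the weaker decay in the previous lemma these would be divergent.

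For the logarithmic piece I use $\int I_1 v^*\phi\,dy=0$ to write
$$
-2mG_0 I_1 v^*\phi(x)=\frac{m}{\pi}\int\bigl[\log|x-y|-\log\la x\ra\bigr]I_1 v^*\phi(y)\,dy,
$$
and then Taylor expand $\log|x-y|-\log\la x\ra=-\frac{x\cdot y}{\la x\ra^2}+W(x,y)$. Integrating the linear part against $I_1 v^*\phi$ reproduces $-\frac{mx}{\pi\la x\ra^2}\cdot\int y\,I_1 v^*\phi\,dy$, the first term of $\Gamma_1$. The remainder $\int W(x,y) I_1 v^*\phi\,dy$ is estimated by the dyadic split $|y|\leq|x|/2$ versus $|y|>|x|/2$: in the near region $|W|\les(|y|^2+1)/\la x\ra^2$, while in the far region $|W|\les 1+\log^-|x-y|+\log\la y\ra+|y|/\la x\ra$. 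Convolving against $\la y\ra^{-2-}$ shows this remainder is bounded and decays at least like $\la x\ra^{-1-}$, hence lies in $L^p\cap L^\infty$ for every $p\geq 2$.

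For the Riesz-type piece
$$
i\alpha\cdot\nabla G_0 v^*\phi(x)=-\frac{i}{2\pi}\alpha\cdot\int\frac{x-y}{|x-y|^2}v^*\phi(y)\,dy,
$$
I would split $v^*\phi=I_1 v^*\phi+I_2 v^*\phi$. For the $I_1$-part, orthogonality $\int I_1 v^*\phi=0$ permits the kernel subtraction $\frac{x-y}{|x-y|^2}\mapsto\frac{x-y}{|x-y|^2}-\frac{x}{\la x\ra^2}$, and the same dyadic splitting bounds the resulting integral in $L^p\cap L^\infty$ for $p\geq 2$. For the $I_2$-part I extract
$$
-\frac{i}{2\pi}\frac{\alpha\cdot x}{\la x\ra^2}\int I_2 v^*\phi(y)\,dy=-\frac{i}{2\pi}\alpha\cdot\frac{x}{\la x\ra^2}M_{22}v^*\phi,
$$
which is the second term of $\Gamma_1$, and control the difference kernel $\frac{x-y}{|x-y|^2}-\frac{x}{\la x\ra^2}$ by $O(|y|/\la x\ra^2)$ on $|y|\leq|x|/2$ and by $O(1/|x-y|+1/\la x\ra)$ on $|y|>|x|/2$; against $\la y\ra^{-2-}$ both pieces again produce an $L^p\cap L^\infty$ remainder for $p\geq 2$.

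Assembling the four remainders gives $\psi=c_0(1,0)^T+\Gamma_1+\Gamma_2$ with $\Gamma_2\in L^p\times L^p$ for every $p\in[2,\infty]$. Since $\Gamma_1$ itself decays only like $1/\la x\ra$, it belongs to $L^p$ precisely for $p>2$ (and to $L^\infty$), producing the stated $\psi-c_0(1,0)^T\in L^p\times L^p$, $2<p\leq\infty$. The main technical obstacle is making the remainder estimates uniform over the whole range $p\in[2,\infty]$: one must simultaneously manage the near-diagonal log and Riesz singularities (which force us to keep the $\log^-$ terms) and the slow Taylor remainder at infinity. This is exactly where the stronger hypothesis $|v_{ij}|\les\la x\ra^{-2-}$ (rather than the $\la x\ra^{-1-}$ of Lemma~\ref{lem: S1}) enters, both to make the first moments absolutely convergent and to provide the extra $|y|$-factor of decay needed for the remainder to sit in $L^2$.
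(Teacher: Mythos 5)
Your plan follows the paper's route closely: start from $\psi=c_0(1,0)^T-\mG_0v^*\phi$, exploit $M_{11}v^*\phi=0$, Taylor-expand the logarithmic and Riesz kernels to extract the $\frac{x}{\la x\ra^2}$-dipole, and control remainders by a near/far split. The only cosmetic difference is that you subtract $\log\la x\ra$ rather than $\log|x|$ and split the Riesz piece into $I_1$ and $I_2$ parts first, whereas the paper subtracts $\frac{x}{|x|^2}$ from the full kernel and then corrects $\frac{x}{|x|^2}\to\frac{x}{\la x\ra^2}$ at the cost of an $O(\la x\ra^{-3})$ error.

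However, there is a genuine quantitative gap in your decay bookkeeping, and it is not merely a presentational slip. You claim $|v^*\phi(y)|\les\la y\ra^{-2-}$ coming from ``$\phi\in L^\infty$'' and one factor of $v^*$, and you assert that this makes the moment $\int_{\R^2}y\,I_1 v^*\phi\,dy$ absolutely convergent. In two dimensions that is false: with $|v^*\phi|\les\la y\ra^{-2-\eps}$ for small $\eps$ one has
$$
\int_{\R^2}|y|\,\la y\ra^{-2-\eps}\,dy\approx\int_1^\infty r^{-\eps}\,dr=\infty.
$$
The same insufficiency propagates to your near-region Taylor remainder, where you need $\int_{|y|<|x|/2}(1+|y|^2)|v^*\phi|\,dy$ to be bounded uniformly in $x$: with only $\la y\ra^{-2-}$ this integral grows like $|x|^{2-}$, cancels the prefactor $\la x\ra^{-2}$, and leaves a remainder of size $\la x\ra^{-0-}$, which is not in $L^2(\R^2)$. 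The correct observation, which the paper uses implicitly, is that $\phi=Uv\psi$ with $\psi\in L^\infty\times L^\infty$ (Lemma~\ref{lem: S1} gives $\psi$, not $\phi$, in $L^\infty$), so $|\phi(y)|\les|v(y)|\,\|\psi\|_\infty\les\la y\ra^{-2-}$, and therefore $|v^*(y)\phi(y)|\les\la y\ra^{-4-}$. With this bound the first moment converges, the near-region remainder is $O(\la x\ra^{-2})\in L^2$, and all the far-region estimates close. Once you replace $\la y\ra^{-2-}$ by $\la y\ra^{-4-}$ throughout, your argument is sound and essentially identical to the paper's.
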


\begin{proof}
	
Recall from Lemma~\ref{lem: S1} that $\psi = c_0 (1,0)^T -\mathcal G_0 v^* \phi $. Therefore, we define
\be\label{eq:G1+G2}
\Gamma_1+ \Gamma_2:=  -\mathcal G_0 v^* \phi =  i\alpha \cdot \nabla G_0v^* \phi-2m G_0 I_1v^* \phi.
\ee
Below we analyze  the right hand side of \eqref{eq:G1+G2}; the combination of the non-$L^2$ pieces gives $\Gamma_1$, the remaining $L^2$ pieces give $\Gamma_2$.

	We already know from Lemma~\ref{lem: S1} that $\psi \in L^\infty\times L^\infty$. Therefore it suffices to prove that $\Gamma_2\in L^2\times L^2$ on  $S:=\{ x\in \R^2: |x|>10\}$.
	We start with the second summand. We use \eqref{eq:G0trick} to write
\begin{multline*}
-4\pi G_0I_1 v^*\phi(x)  = \int \log\Big(\frac{|x-y|^2}{|x|^2}\Big)I_1 v^*\phi(y) dy\\
 =\int_{A}\log\Big(\frac{|x-y|^2}{|x|^2}\Big) I_1 v^*\phi(y)\, dy +\int_{B}\log\Big(\frac{|x-y|^2}{|x|^2}\Big) I_1 v^*\phi(y)\, dy.
\end{multline*}
Here  $A:=\{y\in\R^2:|y|<|x|/10\}$, $B:=\R^2\setminus A$. 
We note that, on the set $A$, $\big||y|^2-2x\cdot y\big|/|x|^2<\frac{1}{2}$, and hence
\begin{align*}
	\log\bigg(\frac{|x-y|^2}{|x|^2}\bigg) =\ln\bigg(1+\frac{|y|^2}{|x|^2}-\frac{2x\cdot y}{|x|^2}\bigg) 
=-\frac{2x\cdot y}{|x|^2}+O\bigg( \frac{\la y\ra^{1+}}{\la x\ra^{1+}}\bigg).
\end{align*}
Therefore, also using that $|v_{ij}(y)|\les \la y\ra^{-2-}$, we have 
\begin{multline*}
\int_{A} \log\bigg(\frac{|x-y|^2}{|x|^2}\bigg)I_1 v^*\phi\, dy
=-\frac{2x}{|x|^2}\cdot \int_{A}  y I_1 v^*(y)\phi(y)\, dy
	+O\bigg( \frac{\int_{A} \la y\ra^{1+} |v^*(y)\phi(y)|\, dy}{\la x\ra^{1+}} \bigg)
	\\=-\frac{2x}{|x|^2}\cdot \int_{A}  y I_1 v^*(y)\phi(y)\, dy
	+O_{L^2(S)}(1).
\end{multline*}
Note that (for $x\in S$)
\begin{align}\nn
\frac{2x}{|x|^2}\cdot \int_{B}  y I_1 v^*(y)\phi(y)\, dy &=O\big(\la x \ra^{-1} \|\la y \ra^{-1-}\|_{L^2(B)}\big)=O\big(\la x \ra^{-1-} \big), \\
\label{xjapx}\frac{x}{|x|^2}-\frac{x}{\la x\ra^2} &=O\big(\la x \ra^{-3} \big).
\end{align}
Therefore 
\begin{align*}
\int_{A} \log\bigg(\frac{|x-y|^2}{|x|^2}\bigg)I_1 v^*\phi\, dy = -\frac{2x}{\la x\ra^2}\cdot \int_{\R^2}  y I_1 v^*(y)\phi(y)\, dy
	+O_{L^2(S)}(1).
\end{align*}
We have 
$$\Big|\log\bigg(\frac{|x-y|^2}{|x|^2}\bigg)\Big| \les 1+\log(\la y\ra)+\log^-(|x-y|),$$
provided that $x\in S$, $y\in B$. Therefore,
\begin{multline*}
	\Big|\int_{ B} \log\bigg(\frac{|x-y|^2}{|x|^2}\bigg)I_1v^*(y)\phi(y)\, dy\Big|
	 \les  \\ \frac{1}{\la x\ra^{1+}} \int_{  B} \la y\ra^{1+} (1+|y|^{0+}+|x-y|^{0-})|v^*(y)\phi(y)|\, dy  =O_{L^2(S)}(1).  
\end{multline*}
This prove that
$$
-2m G_0 I_1v^* \phi=-\frac{mx}{\pi\la x\ra^2}\cdot  \int_{\R^2}  y I_1 v^*(y)\phi(y)\, dy
	+O_{L^2(S)}(1).
$$
Now we consider the first summand. We have
\begin{multline*}
i\alpha \cdot \nabla G_0v^* \phi=-\frac{i}{2\pi}\int_{\R^2}\alpha\cdot \frac{x-y}{|x-y|^2} v^*(y) \phi(y) dy \\
=-\frac{i}{2\pi}\int_{\R^2}\alpha\cdot \Big[\frac{x-y}{|x-y|^2}-\frac{x}{|x|^2}\Big] v^*(y) \phi(y) dy - \frac{i}{2\pi}\alpha\cdot \frac{x}{|x|^2} M_{22} v^* \phi,
\end{multline*}
since $M_{11}v^*\phi=0$. Therefore, the following claim and \eqref{xjapx} finishes the proof of the lemma. 

Claim:
\be\label{eq:tempclaim}
\int_{\R^2} \Big|\frac{x-y}{|x-y|^2}-\frac{x}{|x|^2}\Big| |v^*(y) \phi(y)| dy =O_{L^2(S)}(1).
\ee
To prove this claim first note that (for $x\in S$)
$$
\Big|\frac{x-y}{|x-y|^2}-\frac{x}{|x|^2}\Big| \les \left\{\begin{array}{ll} \frac{\la y\ra^{0+}}{\la x\ra^{1+}}  & y\in A, \\
 \frac{\la y\ra^{0+}}{\la x\ra^{0+} |x-y|} +  \frac{\la y\ra^{0+}}{\la x\ra^{1+}  } & y\in B. \end{array}\right.
$$
The contribution of the nonsingular terms is in $L^2$ as above. Therefore,
$$
\eqref{eq:tempclaim} = \la x\ra^{0-} \int_{B}  \frac{\la y\ra^{0+}}{|x-y|} |v^*(y) \phi(y)| dy + O_{L^2(S)}(1).
$$
The integral is in $L^p$ for any $p>2$ because of the boundedness of the fractional integral operator $\mathcal I_1$ as in the proof of Lemma~\ref{lem: S1}. The claim now follows from  H\"older's inequality since 
$\la x\ra^{0-}\in L^q$ for some $q<\infty$.  
\end{proof}

\begin{rmk}\label{rmk:m swave}
	i) We note that  
	there is a threshold s-wave resonance at $\lambda=m$
	for the free Dirac equation (when $V=0$)
	as the constant
	function $\psi=(1,0)^T$ solves $(D_m-mI)\psi=0$.  \\
	ii)	One can perform a similar analysis centered 
	near $\lambda=-m$ with $\lambda=-\sqrt{m^2+z^2}$.  In this
	case the free equation has a threshold
	resonance at $\lambda=-m$ as $\psi=(0,1)^T$ solves
	$(D_m+mI)\psi=0$.
 \end{rmk}

\begin{lemma}\label{lem:S11}
	
	Let  $|v_{ij}(x)|\les \la x\ra^{-2-}$. If $(H-mI)\psi=0$ for some 
	$\psi=c (1,0)^T+\Gamma_1+\Gamma_2$ with $\Gamma_1\in L^p\times L^p$ for some
	$2<p<\infty$ and $\Gamma_2\in L^2\times L^2$, then $\phi:=Uv\psi  \in S_1(L^2\times L^2)$. Moreover, 
	$\psi = c (1,0)^T -\mathcal G_0 v^* \phi$ and $c=c_0$ as in Lemma~\ref{lem: S1}. Furthermore,  $\Gamma_1,\Gamma_2\in 
	L^p\times L^p$ for all $p\in(2,\infty]$. 
	
\end{lemma}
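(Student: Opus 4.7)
My plan is to invert the argument of Lemma~\ref{lem: S1}. First, the decay hypothesis $|v_{ij}(x)|\les \la x\ra^{-2-}$ combined with $\psi-c(1,0)^T \in L^p\times L^p + L^2\times L^2$ yields $\phi := Uv\psi \in L^2\times L^2$: the contribution $cUv(1,0)^T$ lies in $L^2$ by the $L^2$ decay of $v$, while $v\Gamma_1\in L^2$ by H\"older's inequality (since $v\in L^q$ for any $q>1$) and $v\Gamma_2\in L^2$ because $v\in L^\infty$. Writing $V=v^*Uv$, the eigenvalue equation $(H-mI)\psi=0$ becomes $(D_m-mI)\psi = -v^*\phi$, and with
\[
\widetilde \psi := \psi - c(1,0)^T + \mathcal G_0 v^*\phi,
\]
the identity $(D_m-mI)\mathcal G_0 f = f$ derived inside \eqref{eqn:mresonance1} together with $(D_m-mI)(1,0)^T = 0$ gives $(D_m-mI)\widetilde\psi = 0$.

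The crucial step is to show $\widetilde\psi = 0$. Multiplying by $D_m+mI$ and using $D_m^2=-\Delta+m^2$ gives $-\Delta\widetilde\psi=0$. Since $\Gamma_1+\Gamma_2\in L^p\times L^p + L^2\times L^2$ and $\mathcal G_0 v^*\phi$ has at most logarithmic growth at infinity, $\widetilde\psi$ is a tempered harmonic distribution, hence a polynomial. Comparing $L^2$ norms on $B_R$: one has $\|\Gamma_1\|_{L^2(B_R)}\les R^{1-2/p}$, $\|\Gamma_2\|_{L^2(B_R)}$ bounded, $\|\mathcal G_0 v^*\phi\|_{L^2(B_R)}\les R\log R$, whereas a polynomial of degree $d\geq 1$ has $L^2$ norm on $B_R$ of order $R^{d+1}$. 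This comparison forces degree zero, and the only constant in $\ker(D_m-mI)$ is of the form $c'(1,0)^T$.

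To pin down $c'$ and extract the orthogonality, I expand $\mathcal G_0 v^*\phi$ at infinity. Using $\log|x-y|=\log|x|+O(|y|/|x|)$ for $|y|<|x|/2$ and the integrability $|y|v^*\phi\in L^1$ (which holds because $v$ decays like $\la x\ra^{-2-}$ and $\phi\in L^2$),
\[
\mathcal G_0 v^*\phi(x) = -\frac{m\log|x|}{\pi}\begin{pmatrix} \int(v^*\phi)_1(y)\,dy \\ 0 \end{pmatrix} + O(1),
\]
where we also use $i\alpha\cdot\nabla G_0 v^*\phi=O(|x|^{-1})$. Since $\widetilde\psi=\Gamma_1+\Gamma_2+\mathcal G_0 v^*\phi=c'(1,0)^T$ and a genuine $\log|x|$ is not in $L^p+L^2$, its coefficient must vanish, giving $M_{11}v^*\phi=0$, equivalently $P\phi=0$. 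Once this orthogonality holds, the refined computation inside the proof of Lemma~\ref{lem:moreonS1} shows $\mathcal G_0 v^*\phi$ itself decomposes in $L^p\times L^p$ for every $p\in(2,\infty]$, so the identity $\Gamma_1+\Gamma_2+\mathcal G_0 v^*\phi=c'(1,0)^T$ with $L^p+L^2$ left-hand side forces $c'=0$. This yields $\psi=c(1,0)^T-\mathcal G_0 v^*\phi$.

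To close the circle, from $\phi=Uv\psi$ and the formula for $\psi$ I compute $U\phi = c(a,c)^T-v\mathcal G_0 v^*\phi$, hence $T\phi=(U+v\mathcal G_0 v^*)\phi=c(a,c)^T\in\mathrm{Range}(P)$; combined with $Q\phi=\phi$, this gives $QTQ\phi=QT\phi=0$, so $\phi\in S_1(L^2\times L^2)$. Pairing $T\phi=c(a,c)^T$ against $(a,c)^T$ then identifies $c=c_0$. The improved integrability $\Gamma_1,\Gamma_2\in L^p\times L^p$ for all $p\in(2,\infty]$ is read off from the explicit decomposition of $-\mathcal G_0 v^*\phi$ established in Lemma~\ref{lem:moreonS1}. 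The main obstacle is the combined harmonicity--growth argument and the asymptotic expansion: they must work together to rigidly extract $P\phi=0$ from the mere $L^p+L^2$ decay hypothesis on $\psi - c(1,0)^T$.
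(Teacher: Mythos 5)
Your proof is correct, but it reaches the key conclusion $P\phi=0$ (equivalently $M_{11}v^*\phi=0$, equivalently $\phi\in Q(L^2\times L^2)$) by a genuinely different route than the paper.

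The paper \emph{first} establishes $\phi\in Q$: using $v^*\phi=-(D_m-mI)\psi$ and applying $D_m+mI$ yields the pointwise identity $(-i\alpha\cdot\nabla+2mI_1)v^*\phi=\Delta\psi$; integrating over $\R^2$, the terms $\int\alpha\cdot\nabla(v^*\phi)$ and $\int\Delta\psi$ both vanish (using $v^*\phi\in L^1$, the hypotheses on $\Gamma_1,\Gamma_2$, and a reference to Jensen--Nenciu, Lemma 6.4), forcing $\int I_1 v^*\phi=M_{11}v^*\phi=0$. Only \emph{then} does the paper show $\psi+\mathcal G_0 v^*\phi$ is constant and identify the constant. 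You invert this order: you first establish that $\widetilde\psi=\psi-c(1,0)^T+\mathcal G_0 v^*\phi$ is a harmonic tempered distribution, hence a polynomial, then rule out degree $\geq 1$ via the $L^2(B_R)$ growth comparison, and \emph{finally} extract $P\phi=0$ from the asymptotic expansion of $\mathcal G_0 v^*\phi$, since the surviving $\log|x|$ term with coefficient $M_{11}v^*\phi$ cannot match the right-hand side which lies in $L^p+L^2+L^\infty$. This is sound: $\log\la x\ra\notin L^\infty+L^r$ for any finite $r$, by the same $L^2(B_R)$-norm argument. Your route has the advantage of being self-contained (no appeal to [JN, Lemma 6.4]) and of making the obstruction visible as an explicit $\log|x|$ growth; the paper's derivative identity is shorter and avoids the slightly delicate point of carrying out the growth comparison \emph{before} $P\phi=0$ is known (at which stage $\nabla G_0 v^*\phi$ is only controlled in $L^q$, $q>2$, not pointwise — you do handle this correctly by working with $L^2(B_R)$ norms rather than pointwise bounds). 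The remaining steps — showing $c'=0$ by feeding $M_{11}v^*\phi=0$ into the Lemma~\ref{lem:moreonS1} decomposition, and then computing $TQ\phi=c(a,c)^T$ to deduce $\phi\in S_1$ with $c=c_0$ — coincide with the paper's.

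One small thing worth spelling out: when you invoke the Lemma~\ref{lem:moreonS1} computation to place $\mathcal G_0 v^*\phi$ in $L^p+L^2$, you should note that at this stage you already have the representation $\psi=(c+c')(1,0)^T-\mathcal G_0v^*\phi$, hence $\phi=(c+c')Uv(1,0)^T-Uv\mathcal G_0v^*\phi$, which is what allows you to bootstrap $\phi\in L^{\infty-}$ as in the proof of Lemma~\ref{lem: S1} (that bootstrap is what the proof of Lemma~\ref{lem:moreonS1} silently relies on when it uses $\psi\in L^\infty$). Without that remark the appeal to Lemma~\ref{lem:moreonS1} looks circular, since its hypothesis is $\phi\in S_1$.
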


\begin{proof} First of all, using $\phi=Uv\psi$ and the assumption on $v$, $\psi$, we conclude that  $\phi\in L^2\times L^2$  and $v^*\phi\in L^1\times L^1$. We have, since $(H-mI)\psi=(D_m-mI+V)\psi=0$,
$$
		v^* \phi = v^*Uv\psi=V\psi = -(D_m-mI) \psi.
$$
We note that $\phi \in Q(L^2\times L^2)$ if $P\phi =0$, which is equivalent to $  M_{11}v^*\phi=\int I_1 v^*\phi= 0$.   
Using the identity above we have 
$$
(-i\alpha \cdot \nabla +2m I_1)v^*\phi=(D_m+mI)v^*\phi = - (D_m+mI)(D_m-mI) \psi=\Delta \psi.
$$ 
Therefore, it suffices to prove that
$$
\int \alpha \cdot \nabla  (v^*\phi) =0,\,\,\,\,\text{ and } \int \Delta \psi =0.
$$
Both of these follow easily using $v^*\phi \in L^1\times L^1$, and the assumptions on $\Gamma_1, \Gamma_2$, see e.g. \cite[Lemma 6.4]{JN}. 
Thus,  $\phi \in Q(L^2\times L^2)$.
	
	We now claim that $\psi=c(1,0)^T-\mathcal G_0 v^* \phi$.  To show this,  compute
	\begin{multline*}
		(D_m-mI)(\psi+\mathcal G_0 v^* \phi) =(D_m-mI)\psi+(D_m-mI)\mathcal G_0 v^* \phi \\ =
		-V\psi+v^*\phi 
		 =-v^*\phi+v^*\phi =0.
	\end{multline*}
	In the second equality we used \eqref{eqn:mresonance1}. If we apply $(D_m+mI)$ to this equality, we obtain
	$$
	-\Delta (\psi+\mathcal G_0 v^* \phi )=0,
	$$
which implies that   $\psi+\mathcal G_0 v^* \phi  =(c_1,c_2)^T$, since $  \psi+\mathcal G_0 v^* \phi \in (L^2\times L^2) + (L^\infty\times L^\infty)$ (see the proof of Lemma~\ref{lem: S1}). Finally, since 
$$0=(D_m-mI)(\psi+\mathcal G_0 v^* \phi) = (D_m-mI)(c_1,c_2)^T=-2m(0,c_2)^T, $$
 we conclude that $c_2=0$. Therefore, we
	may write $\psi=c(1,0)^T-\mathcal G_0v^*\phi$.  Now, using that $Q\phi=\phi$ and
	the representation of $\psi$, we consider
	\begin{align*}
		TQ\phi&=(U+v\mathcal G_0 v^*) \phi=U\phi+v(c(1,0)^T-\psi) =v\psi+cv(1,0)^T-v\psi =cv(1,0)^T.
	\end{align*}
	Therefore, 
	$$
		QTQ\phi= cQv(1,0)^T=0,\,\,\,\text{ and } PT\phi =cPv(1,0)^T,
	$$
which implies that $\phi\in S_1$ and $c=c_0$. This finishes the proof together with Lemma~\ref{lem:moreonS1}. 	
\end{proof}

\begin{lemma}\label{lem:S2} Assume that  $|v_{ij}(x)|\les \la x\ra^{-2-}$.  Fix $\phi =Uv\psi \in S_1(L^2\times L^2)$. Then,
	$\phi \in S_2(L^2\times L^2)$  if and only if
	$\psi \in  L^p\times L^p $ for all $p\in (2,\infty]$, that is $c_0=0$.
	
\end{lemma}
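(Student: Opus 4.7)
The plan is to recognize that the condition $\phi\in S_2$ admits a concrete algebraic reformulation in terms of $c_0$, and then to combine this with the refined expansion of $\psi$ from Lemma~\ref{lem:moreonS1} to read off the $L^p$ behavior.

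First, I would unpack the definition $S_2=\operatorname{Ker}(T_1)$ with $T_1=S_1TPTS_1$. Since $P$ is an orthogonal projection and $T,S_1$ are self-adjoint, one has the factorization $T_1=(PTS_1)^*(PTS_1)$, so $T_1$ is nonnegative and
\[
T_1\phi=0 \iff \la T_1\phi,\phi\ra =0 \iff PT\phi=0,
\]
using $S_1\phi=\phi$. From the proof of Lemma~\ref{lem: S1} we already have the identity $PT\phi=c_0\,v(1,0)^T$. Since $\|(a,c)^T\|_2>0$ we conclude
\[
\phi\in S_2(L^2\times L^2)\ \iff\ c_0=0.
\]

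Next I would relate $c_0$ to the $L^p$ behavior of $\psi$ via Lemma~\ref{lem:moreonS1}, which writes
\[
\psi = c_0(1,0)^T+\Gamma_1+\Gamma_2,
\]
with $\Gamma_2\in L^p\times L^p$ for every $p\in[2,\infty]$, and
\[
\Gamma_1 = -\frac{mx}{\pi\la x\ra^2}\!\cdot\!\int_{\R^2} y\,I_1 v^*(y)\phi(y)\,dy
-\frac{i}{2\pi}\alpha\!\cdot\!\frac{x}{\la x\ra^2}M_{22}v^*\phi.
\]
Since $x/\la x\ra^2=O(\la x\ra^{-1})$ belongs to $L^p(\R^2)$ for every $p>2$ (but not $p=2$), the term $\Gamma_1$ lies in $L^p\times L^p$ for all $p\in(2,\infty]$.

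For the forward direction: if $c_0=0$, then $\psi=\Gamma_1+\Gamma_2$, which by the preceding paragraph is in $L^p\times L^p$ for every $p\in(2,\infty]$. For the converse, assume $\psi\in L^p\times L^p$ for all $p\in(2,\infty]$; in particular $\psi\in L^p$ for some finite $p>2$. Rearranging gives $c_0(1,0)^T=\psi-\Gamma_1-\Gamma_2\in L^p\times L^p$, and since the constant vector $(1,0)^T$ lies in no $L^p(\R^2)$ with $p<\infty$, we must have $c_0=0$. Combining the two equivalences yields the lemma. The only mild subtlety is tracking that the $\Gamma_j$ are genuinely produced from the representation of $\psi$ in Lemma~\ref{lem:moreonS1}, but since $\phi$ is assumed to lie in $S_1$ and $v$ has the required decay, that lemma applies directly, so the argument is essentially bookkeeping around the identity $PT\phi=c_0 v(1,0)^T$.
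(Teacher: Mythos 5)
Your argument is correct and matches the paper's proof essentially verbatim: both reduce $\phi\in S_2$ to $\|PT\phi\|_2=0$ via the factorization $T_1=(PTS_1)^*(PTS_1)$, use the identity $PT\phi=c_0\,v(1,0)^T$, and then invoke the representation $\psi=c_0(1,0)^T+\Gamma_1+\Gamma_2$ from Lemma~\ref{lem:moreonS1} together with the fact that $(1,0)^T\notin L^p$ for $p<\infty$. The only difference is that you re-derive $\Gamma_1\in L^p$ for $p\in(2,\infty]$, which the paper already records as part of Lemma~\ref{lem:moreonS1}.
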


\begin{proof}
	
	Note that $\phi \in S_2$ means $S_1TPTS_1\phi=0$, which holds if and only if 
	\begin{align}
		0=\la \phi, S_1TPTS_1\phi\ra=\la PT \phi, PT \phi \ra =\|PT  \phi\|_2^2.
	\end{align}
	Therefore, $\phi \in S_2$ if and only if $c_0=0$.  Finally note that   the representation in  Lemma~\ref{lem:moreonS1} implies that
	$c_0=0$ if and only if
	$\psi \in  L^p\times L^p $ for all $p\in (2,\infty]$. 
\end{proof}

\begin{rmk}\label{rmk:ranks} i) By the representation in Lemma~\ref{lem:moreonS1},  if $\phi_1, \phi_2\in S_1$, then there is a constant $c$ so that
$\phi_1-c\phi_2 $ or $\phi_2-c\phi_1 \in L^p\times L^p$, $p\in(2,\infty]$. Therefore in $S_2$. This implies that the rank of $S_1-S_2$ is at most $1$. \\
ii) Note that $\Gamma_1$ in Lemma~\ref{lem:moreonS1} can be written as
$$
\Gamma_1= w_1\frac{x_1}{\la x\ra^2} +w_2 \frac{x_2}{\la x\ra^2}, 
$$
where the constant vectors $w_j$ are defined as
$$
w_j=-\frac{m}\pi \int_{\R^2}y_j I_1 v^*(y)\phi(y) dy-\frac{i}{2\pi} \alpha_j M_{22}v^*\phi,  \,\,\,j=1,2.
$$
Also note that $I_2w_j=0$, $j=1,2$, since $I_2I_1=I_1M_{22}=0$ and $I_2\alpha_j=\alpha_j I_1$.\\
ii) Below we   prove that $\phi \in S_3$ if and only if $\psi \in L^2\times L^2$.  This and part i) imply that the rank of $S_2-S_3$ is at most $2$.
\end{rmk}

\begin{lemma}\label{G2 invert}
If $|v_{ij}|\les \la x\ra^{-3-}$ then  the operator $S_3v\mG_2v^*S_3$ on $S_3L^2$ is invertible. Furthermore, for $f\in S_3L^2$, we have
\begin{align}\label{G0 to G2 ident}
	\la \mG_2 v^*f, v^*f\ra =\frac{1}{2m} \la \mG_0v^*f,\mG_0v^*f\ra.
\end{align}
\end{lemma}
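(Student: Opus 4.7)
The plan has two parts: first establish the identity \eqref{G0 to G2 ident}, then deduce invertibility from positivity on the finite-dimensional subspace $S_3 L^2$.

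The algebraic heart of the identity is the operator equation
\begin{equation}\label{eq:g2square}
\mathcal{G}_2 \;=\; \tfrac{1}{2m}(D_m+mI)^2 G_2.
\end{equation}
I would prove this by a direct computation: expand $(D_m+mI)^2 = -\Delta + 2m(D_m+mI)$; observe that $D_m + mI = -i\alpha\cdot\nabla + 2mI_1$ (since $\beta + I = 2I_1$); and use the pointwise identity $-\Delta G_2 = G_0 - \frac{1}{2\pi}(M_{11}+M_{22})$, which follows from $\Delta(r^2\log r) = 4\log r + 4$ in $\mathbb{R}^2$. Substituting returns exactly the expression \eqref{def:mG2}.

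Now fix $f\in S_3 L^2$, write $g = v^* f$, and set $\psi := -\mathcal{G}_0 g$. By Lemma~\ref{lem: S1} (with $c_0 = 0$ since $f\in S_2$) and Remark~\ref{rmk:ranks}(iii), $\psi\in L^2\times L^2$ is a threshold eigenfunction satisfying $(D_m - mI)\psi = -g$; applying $D_m + mI$ yields $(D_m+mI) g = \Delta\psi$. Using \eqref{eq:g2square} and self-adjointness of $D_m$,
\begin{align*}
\langle \mathcal{G}_2 g, g\rangle
&= \tfrac{1}{2m}\langle(D_m+mI) G_2 g,\, (D_m+mI) g\rangle = \tfrac{1}{2m}\langle(D_m+mI) G_2 g,\, \Delta\psi\rangle\\
&= \tfrac{1}{2m}\langle \Delta(D_m+mI) G_2 g,\, \psi\rangle.
\end{align*}
Since $\Delta$ commutes with $D_m+mI$, one has $\Delta(D_m+mI) G_2 g = (D_m+mI)\bigl[-G_0 g + \tfrac{1}{2\pi}(M_{11}+M_{22}) g\bigr]$. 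The first summand equals $-\mathcal{G}_0 g = \psi$ by the very definition of $\mathcal{G}_0$. The second vanishes: since $S_3\leq S_1$, $M_{11} g = 0$, so $(M_{11}+M_{22}) g = (0,c_2)^T$ with $c_2 = \int g_2$, and $(D_m+mI)(0,c_2)^T = 0$ because $\beta(0,c_2)^T = -(0,c_2)^T$ exactly cancels $mI(0,c_2)^T$. Thus $\langle \mathcal{G}_2 g, g\rangle = \tfrac{1}{2m}\|\psi\|^2 = \tfrac{1}{2m}\|\mathcal{G}_0 g\|^2$, which is \eqref{G0 to G2 ident}.

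Invertibility then follows quickly: $S_3 L^2\leq S_1 L^2$ is finite-dimensional by Remark~\ref{rmk:resonances}(ii), and $S_3 v\mathcal{G}_2 v^* S_3$ is self-adjoint because $\mathcal{G}_2$ is. The identity gives $\langle v\mathcal{G}_2 v^* f, f\rangle = \tfrac{1}{2m}\|\mathcal{G}_0 v^* f\|^2 \geq 0$; equality forces $\psi = 0$, hence $v^* f = -(D_m-mI)\psi = 0$, and finally $f = Uv\psi = 0$ by the representation in Lemma~\ref{lem: S1}. So the form is strictly positive definite on the finite-dimensional space $S_3 L^2$ and the operator is invertible. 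The main technical obstacle is to justify the integrations by parts above, since $G_2 g$ grows like $|x|^2\log|x|$. I plan to handle this by inserting a smooth cutoff $\chi_R$ and checking that the boundary contributions vanish as $R\to\infty$, exploiting both the rapid decay of $g = v^* f$ ensured by $|v_{ij}(x)|\lesssim \langle x\rangle^{-3-}$ and the cancellations already encoded in $\mathcal{G}_2$: the $\tfrac{1}{2m}G_0$ and $-\tfrac{1}{4\pi m}(M_{11}+M_{22})$ terms precisely subtract off the leading growth of $I_1 G_2$ against a function with $M_{11}g = 0$.
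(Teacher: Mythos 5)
Your argument takes a genuinely different route from the paper's. The structural identity $\mG_2 = \tfrac{1}{2m}(D_m + mI)^2 G_2$ is correct (using $\beta + I = 2I_1$, $(D_m+mI)^2 = -\Delta + 2m(D_m+mI)$, and $-\Delta G_2 = G_0 - \tfrac{1}{2\pi}(M_{11}+M_{22})$, which all check out against \eqref{def:mG2}), and you then unwind \eqref{G0 to G2 ident} by self-adjointness and the eigenfunction relation $(D_m - mI)\psi = -v^*f$. The paper works on the Fourier side instead: it takes $z=iw$ purely imaginary so that $\mR_0(\sqrt{m^2+z^2})$ becomes an explicit multiplier operator, uses the resolvent expansion from Lemma~\ref{lem:R0exp} together with the vanishing of the $M_{11}$ and $v\mG_1v^*$ terms on $S_3$ to identify $\la\mG_2 v^*f, v^*f\ra$ with the $w\to 0$ limit of a quadratic form $\int A(w,\xi)|\widehat{v^*f}(\xi)|^2\,d\xi$, justifies the limit by monotone convergence after diagonalizing $A(w,\xi)$, and finally recognizes $A(0,\xi)$ as $\tfrac{1}{2m}$ times the square of the multiplier of $\mG_0$. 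Your approach is more elementary in spirit, but the paper's Fourier/monotone-convergence route avoids the main technical obstruction you flag.

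That obstruction is a genuine gap as written: $G_2 g$ grows like $|x|^2\log|x|$, $\Delta\psi$ is only a distribution, and each pairing in your chain needs a real argument; you propose cutoffs and vanishing boundary terms but do not supply them, and the verification is not routine. There is also a subtle circularity in the way you obtain $\psi\in L^2\times L^2$. You cite Remark~\ref{rmk:ranks}(iii), but that remark points forward to Lemma~\ref{S3 characterization}, which in turn \emph{uses} \eqref{G0 to G2 ident} — the very identity you are proving. What the results available before Lemma~\ref{G2 invert} actually guarantee for $f\in S_3 \leq S_2$ is only $\psi\in L^p\times L^p$ for $p\in(2,\infty]$ (Lemmas~\ref{lem:moreonS1} and~\ref{lem:S2}); the square-integrability of $\psi$ is a consequence of the identity together with the finiteness of the Hilbert–Schmidt pairing $\la v\mG_2 v^*f,f\ra$, not an input to it. The repair is simple — run the computation with $\psi\in L^{p}$, $p>2$, observe that the left side is finite, and deduce $\psi\in L^2$ a posteriori — but as stated your proof relies on a fact established only later in the paper, and later only via the lemma at hand.
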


\begin{proof}
Noting that $S_3v\mG_2v^*S_3$ is an Hilbert Schmidt operator, it suffices to check that the kernel is empty. Given $f$ in the kernel of $S_3v\mG_2v^*S_3$, since $S_3\leq S_2 \leq S_1\leq Q$,  we have
\be \label{tempor}
M_{11}v^*f=0, \text{ and } S_2v\mG_1v^*S_2 f=0.
\ee
Using Lemma~\ref{lem:R0exp} for $\mR_0(\lambda)$, $\lambda=\sqrt{m^2+z^2}$, and using the bound on $|v_{ij}|$ we have  
\be\label{eq:wid}
\frac1{z^2}\left\la \big[\mR_0(\lambda)-\mG_0\big]v^*f,v^*f \right\ra =\left\la  \mG_2v^*f,v^*f \right\ra+o(1) =o(1)
\ee
as $z\to 0$.
Using this with $z=iw$, $0<w \ll m$, we calculate the left hand side of \eqref{eq:wid} 
$$
\int_{\R^2}\left\la A(w,\xi)\widehat{v^*f}(\xi), \widehat{v^*f}(\xi)\right\ra_{\C^2} d\xi,
$$
where the Fourier multiplier $A(w,\xi)$ is given by 
\begin{multline*}
A(w,\xi)=-\frac1{w^2}\Bigg[\frac1{w^2+|\xi|^2} \left(\begin{array}{cc}m+\sqrt{m^2-w^2} & \overline\xi\\ \xi & \sqrt{m^2-w^2} -m\end{array}\right)-\frac1{|\xi|^2}\left(\begin{array}{cc}2m & \overline\xi\\ \xi & 0\end{array}\right) \Bigg]\\
= \frac{1}{(w^2+|\xi|^2)|\xi|^2} \left(\begin{array}{cc}2m+\frac{|\xi|^2}{w^2} (m-\sqrt{m^2-w^2}) & \overline\xi\\ \xi & \frac{|\xi|^2}{w^2} (m-\sqrt{m^2-w^2}) \end{array}\right).
\end{multline*}
Here $\xi=(\xi_1,\xi_2)$ is identified with $\xi_1+i\xi_2$. Note that, for $\xi\neq 0$, $A$ is positive definite, self-adjoint, and its eigenvalues are %nonincreasing functions of $w\in(0,m)$. 
$$
\lambda_{1,2}=\frac{1}{(w^2+|\xi|^2)|\xi|^2} \Big(m+\frac{|\xi|^2}{w^2}(m-\sqrt{m^2-w^2})\pm\sqrt{m^2+\xi^2}\Big).
$$
It is straightforward to check that $\lambda_{1,2}$ are nonincreasing functions of $w\in(0,m)$.
Therefore using monotone convergence theorem after diagonalizing, we have
$$
0=\lim_{w\to 0^+}\int_{\R^2}\left\la A(w,\xi)\widehat{v^*f}(\xi), \widehat{v^*f}(\xi)\right\ra d\xi=\int_{\R^2} \left\la A(0,\xi)\widehat{v^*f}(\xi), \widehat{v^*f}(\xi)\right\ra_{\C^2} d\xi,
$$
where 
$$
A(0,\xi)
= \frac{1}{ |\xi|^4 } \left(\begin{array}{cc}2m+\frac{|\xi|^2}{2m}  & \overline\xi\\ \xi & \frac{|\xi|^2}{2m}   \end{array}\right).
$$
Since $A(0,\xi)$ is also positive definite and self adjoint, we conclude that $\widehat{v^* f}(\xi)=0$. This implies  that  $v^*f=0$ since  $v^*f$ has $L^1$ entries. Recalling the definition of $v$, we obtain    $\eta_1f_1=0, \eta_2f_2=0$. Also noting that $f=Uv\psi=(\eta_1 h_1,\eta_2h_2),$ where $h=UB\psi$, we conclude that $f=0$. 
Therefore $S_3 v\mG_2v^*S_3$ is invertible.
 
Further, noting that
$$
A(0,\xi)= \frac1{2m}\Bigg[\frac{1}{ |\xi|^2 } \left(\begin{array}{cc}2m  & \overline\xi\\ \xi & 0  \end{array}\right)\Bigg]^2,
$$
we obtain the identity \eqref{G0 to G2 ident} for any $f\in S_3L^2$.
\end{proof}

\begin{lemma}\label{S3 characterization}
Assume that  $v(x)\les \la x\ra^{-3-}$. Fix $\phi=Uv\psi\in S_2(L^2\times L^2)$. Then $\phi \in S_3(L^2\times L^2)$ if and only if $\psi \in L^2\times L^2$.
\end{lemma}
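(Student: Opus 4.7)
The plan is to reduce both $\phi \in S_3$ and $\psi \in L^2$ to the same pair of algebraic conditions on the low-order moments of $h := v^*\phi$, and then match them. For $\phi \in S_2(L^2\times L^2)$, set $W_j := \int y_j h_1(y)\,dy$ for $j=1,2$ and $c_2 := \int h_2(y)\,dy$, noting that $\int h_1\,dy = 0$ since $\phi \in Q$. By the spectral theorem for self-adjoint operators, $\phi \in S_3 = \mathrm{Ker}(S_2 v\mG_1 v^* S_2)$ will be equivalent to $\la \mG_1 h, h\ra = 0$ once we show that $S_2 v\mG_1 v^* S_2$ has a definite sign on $S_2(L^2\times L^2)$; I expect it to be non-positive.

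To verify non-positivity and extract the relevant algebraic conditions, I will expand $\la \mG_1 h, h\ra$ using \eqref{def:mG1}. The term $2m\la G_1 I_1 h, h\ra$ reduces to $-4m|\vec W|^2$ upon expanding $|x-y|^2 = |x|^2 + |y|^2 - 2x\cdot y$ and invoking $\int h_1 = 0$; the term $-\tfrac{2}{m}\la M_{11}h,h\ra$ vanishes while $-\tfrac{2}{m}\la M_{22}h,h\ra = -\tfrac{2}{m}|c_2|^2$; and $\la -i\alpha\cdot\nabla G_1 h, h\ra$ produces the cross-terms $4\,\mathrm{Im}(c_2\overline{W_1}) - 4\,\mathrm{Re}(c_2\overline{W_2})$ after most summands are killed by the vanishing of $\int h_1$. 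Completing squares in the three complex variables $(W_1, W_2, c_2)$ then yields the identity
\begin{equation*}
\la \mG_1 h, h\ra \;=\; -\Big|2\sqrt{m}\, W_1 + \tfrac{ic_2}{\sqrt{m}}\Big|^2 - \Big|2\sqrt{m}\, W_2 + \tfrac{c_2}{\sqrt{m}}\Big|^2,
\end{equation*}
which is non-positive and vanishes precisely when $mW_1 + \tfrac{ic_2}{2} = 0$ and $mW_2 + \tfrac{c_2}{2} = 0$.

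On the other side, Lemma~\ref{lem:moreonS1} combined with Lemma~\ref{lem:S2} (which gives $c_0 = 0$ for $\phi\in S_2$) writes $\psi = \Gamma_1 + \Gamma_2$ with $\Gamma_2 \in L^2\times L^2$. Using the explicit form of $\alpha\cdot x$, a short calculation shows that $\Gamma_1$ has vanishing second component and first component equal to $-\tfrac{1}{\pi\la x\ra^2}\bigl[x_1(mW_1 + \tfrac{ic_2}{2}) + x_2(mW_2 + \tfrac{c_2}{2})\bigr]$. Since $\int_0^\infty r^3/(1+r^2)^2\,dr = +\infty$ and $\int_0^{2\pi}|\alpha\cos\theta + \beta\sin\theta|^2\,d\theta = \pi(|\alpha|^2+|\beta|^2)$, any nonzero linear combination $\alpha\,\tfrac{x_1}{\la x\ra^2} + \beta\,\tfrac{x_2}{\la x\ra^2}$ fails to lie in $L^2(\R^2)$. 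Hence $\Gamma_1 \in L^2$ iff both coefficients vanish, which are exactly the conditions from the previous paragraph, giving $\psi \in L^2 \iff \Gamma_1 = 0 \iff \phi \in S_3$. The main obstacle is the algebraic step in the middle paragraph: carefully tracking complex conjugates through $\la -i\alpha\cdot\nabla G_1 h, h\ra$ and completing squares so that the mixed expression in the three moments collapses to the sum of two negative squares above. This non-positivity is essential, because it is what allows the transition from the scalar identity $\la \mG_1 v^*\phi, v^*\phi\ra = 0$ to the operator identity $S_2 v\mG_1 v^* S_2 \phi = 0$.
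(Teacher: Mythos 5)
Your proof is correct, and it takes a genuinely different route from the paper's. The paper handles the two directions by two separate devices: for $\phi\in S_3\Rightarrow\psi\in L^2$ it invokes the Fourier-side identity $\la\mG_2 v^*f,v^*f\ra=\frac1{2m}\la\mG_0 v^*f,\mG_0 v^*f\ra$ from Lemma~\ref{G2 invert} to conclude $\|\psi\|_2^2=2m\la v\mG_2 v^*\phi,\phi\ra<\infty$, while for $\psi\in L^2\Rightarrow\phi\in S_3$ it reads the vanishing of the coefficients $w_j$ in Lemma~\ref{lem:moreonS1} as the moment conditions \eqref{eq:yenimagic}--\eqref{eq:yenimagic1} and then verifies $S_2v\mG_1 v^*S_2\phi=0$ by a term-by-term operator computation. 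Your route instead runs both directions through the single scalar quantity $\la\mG_1 h,h\ra$ with $h=v^*\phi$: using only $\int h_1=0$, I verified that the four pieces of $\mG_1$ in \eqref{def:mG1} contribute respectively $4\,\mathrm{Im}(c_2\overline{W_1})-4\,\mathrm{Re}(c_2\overline{W_2})$, $-4m(|W_1|^2+|W_2|^2)$, $0$, and $-\frac2m|c_2|^2$, which sums to $-\bigl|2\sqrt m\,W_1+\frac{ic_2}{\sqrt m}\bigr|^2-\bigl|2\sqrt m\,W_2+\frac{c_2}{\sqrt m}\bigr|^2$, exactly as you claim. Since this is nonpositive on all of $Q(L^2\times L^2)$, and $T_2=S_2v\mG_1 v^*S_2$ is self-adjoint, one has $\phi\in S_3=\mathrm{Ker}(T_2)$ iff the form vanishes, i.e.\ iff $mW_1+\frac{ic_2}{2}=0$ and $mW_2+\frac{c_2}{2}=0$; by the explicit form of $\Gamma_1$ in Lemma~\ref{lem:moreonS1} (with $c_0=0$ from Lemma~\ref{lem:S2}), these are precisely the coefficients of the non-$L^2$ functions $x_1/\la x\ra^2$ and $x_2/\la x\ra^2$, so they vanish iff $\Gamma_1=0$ iff $\psi=\Gamma_1+\Gamma_2\in L^2$. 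Your argument is more unified and more elementary, bypassing Lemma~\ref{G2 invert} entirely and making explicit a sign-definiteness of $T_2$ that the paper never states; the paper's use of the $\mG_0$--$\mG_2$ identity is not gratuitous, though, since that identity is needed anyway in Lemma~\ref{eigenproj lem} to derive the eigenprojection formula.
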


\begin{proof}
By  Lemma~\ref{lem:S2}, we have 
$$
\psi=-\mG_0v^*\phi.
$$
Using this and \eqref{G0 to G2 ident}, if $\phi \in S_3$, then we have
\begin{align*}
 \|\psi\|^2_2 =\la \mG_0v^*\phi,\mG_0v^*\phi\ra=2m\la v \mG_2v^*\phi,\phi\ra < \infty
\end{align*}
by the decay assumption on $v$.

Now assume that $\psi \in L^2\times L^2$. Since $\frac{x_j}{\la x\ra^2}\not \in L^2$, $j=1,2$, by Lemma~\ref{lem:moreonS1} and part ii) of Remark~\ref{rmk:ranks}, we have $w_j=0$, $j=1,2$, which implies that
\be\label{eq:yenimagic}
  \int_{\R^2}y_j I_1 v^*(y)\phi(y) dy= -\frac{i}{2m } \alpha_j M_{22}v^*\phi,  \,\,\,j=1,2.
\ee
Since $\alpha^2_j=I$, this also implies that
\be\label{eq:yenimagic1}
 \int_{\R^2} \alpha\cdot y   I_1 v^*(y)\phi(y) dy= -\frac{i}{ m }   M_{22}v^*\phi.
\ee

We are ready to prove that $S_2v\mG_1v^*S_2\phi=0$.
%, which follows from 
%$$
%\big\la \mG_1v^* \phi,v^*\widetilde \phi\big\ra,\,\,\,\,\text{ for any } \widetilde \phi\in S_2. 
%$$
Recall from \eqref{def:mG1} that
$$
\mG_1=-i\alpha \cdot \nabla G_1
	+ 2mG_1 I_{1}-\frac2mM_{11}-\frac2mM_{22}.
$$
Note that the contribution of the third term is zero. We  consider the contribution of the second term. We have
$S_2 vG_1I_1v^*S_2=S_2vWI_1v^*S_2$, where $W$ is the integral operator with
kernel $-2x\cdot y$. This is because $G_1(x,y)=|x-y|^2=|x|^2-2x\cdot y+|y|^2$, and the contribution of $|x|^2+|y|^2$ is zero since $PS_2=S_2P=0$. Therefore, we have
$$
2mS_2 vG_1I_1v^*S_2\phi=-4m S_2 v(x) \int_{\R^2} (x \cdot y) I_1 v^*(y)\phi(y) dy\\
=2i S_2 v(x) (\alpha \cdot x) M_{22}v^*\phi.
$$
In the second equality we used \eqref{eq:yenimagic}.
The contribution of the first term is 
\begin{multline*}
-iS_2v \alpha \cdot \nabla G_1 v^* \phi = -2i S_2 v(x) \int_{\R^2} \alpha \cdot (x-y) v^*(y) \phi(y) dy\\
=  -2i S_2 v(x) (\alpha \cdot  x) \int_{\R^2}   v^*(y) \phi(y) dy+2i S_2 v(x) \int_{\R^2} \alpha \cdot  y  v^*(y) \phi(y) dy \\
= -2i S_2 v(x) (\alpha \cdot  x) M_{22}v^*\phi+2i S_2 v(x) I_2 \int_{\R^2} \alpha \cdot  y  v^*(y) \phi(y) dy.
\end{multline*}
In the last equality we used $PS_2=S_2P=0$.
Therefore, the sum of the contributions of the first two terms in the definition of $\mathcal G_1$ is equal to (using \eqref{eq:yenimagic1})
$$
2i S_2 v(x) I_2 \int_{\R^2} \alpha \cdot  y  v^*(y) \phi(y) dy
=2i S_2 v(x)  \int_{\R^2} \alpha \cdot  y   I_1 v^*(y) \phi(y) dy = \frac2m S_2 v(x) M_{22}v^*\phi, 
$$
which cancels the contribution of the last term in the definition of $\mathcal G_1$. This finishes the proof of the lemma. 
\end{proof}

\begin{lemma}\label{eigenproj lem}
Assume that  $v(x)\les \la x\ra^{-3-}$.  The operator 
$$P_m:=\frac1{2m} \mG_0v S_3[S_3v\mG_2v^*S_3]^{-1}S_3 v^*\mG_0$$
is the finite rank orthogonal projection  
onto the $m$ energy eigenspace of $H=D_m+V$.
\end{lemma}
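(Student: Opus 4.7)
The plan is to verify that $P_m$ is a finite-rank orthogonal projection whose range equals the $m$-eigenspace $E_m := \ker(H - mI) \cap (L^2 \times L^2)$, which pins down $P_m$ uniquely. To organize the algebra, I write $P_m = \tfrac{1}{2m} C^* A^{-1} C$ where $A := S_3 v \mathcal{G}_2 v^* S_3$ on $S_3(L^2\times L^2)$ and $C$ is the natural right factor of $P_m$ (so $C^*$ is the left factor). Here I use that $\mathcal{G}_0^* = \mathcal{G}_0$ and $\mathcal{G}_2^* = \mathcal{G}_2$: the scalar kernels $G_0$, $G_2$ are symmetric, $I_1$ is self-adjoint, and $-i\alpha \cdot \nabla$ is formally self-adjoint and commutes with $G_0$ and $G_2$.

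Self-adjointness of $P_m$ is then immediate from self-adjointness of $A$ and hence of $A^{-1}$ on $S_3(L^2\times L^2)$. The key step is upgrading the scalar identity \eqref{G0 to G2 ident} of Lemma~\ref{G2 invert} to the operator identity
\begin{align*}
CC^* = 2mA \quad \text{on } S_3(L^2\times L^2).
\end{align*}
I obtain this by polarization: for $f, g \in S_3(L^2\times L^2)$,
\begin{align*}
\langle v \mathcal{G}_2 v^* f, g\rangle = \langle \mathcal{G}_2 v^* f, v^* g\rangle = \tfrac{1}{2m} \langle \mathcal{G}_0 v^* f, \mathcal{G}_0 v^* g\rangle = \tfrac{1}{2m} \langle v \mathcal{G}_0^2 v^* f, g\rangle,
\end{align*}
where the last step uses $\mathcal{G}_0^* = \mathcal{G}_0$. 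Hence $S_3 v \mathcal{G}_0^2 v^* S_3 = 2mA$, which is exactly $CC^*$. Substituting into $P_m^2 = \tfrac{1}{4m^2} C^* A^{-1}(CC^*) A^{-1} C$ collapses it to $\tfrac{1}{2m}C^*A^{-1}C = P_m$, proving idempotency.

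For the range I use the bijection $\phi = Uv\psi \leftrightarrow \psi = -\mathcal{G}_0 v^* \phi$ between $S_3(L^2\times L^2)$ and $E_m$ supplied by Lemmas~\ref{lem:S2} and~\ref{S3 characterization}. For any $f \in L^2\times L^2$, set $\xi := A^{-1} C f \in S_3(L^2\times L^2)$; then $P_m f = \tfrac{1}{2m}\mathcal{G}_0 v^* \xi$ lies in $E_m$ by the bijection, so $\mathrm{Range}(P_m) \subseteq E_m$. Conversely, for $\psi \in E_m$ with $\phi := Uv\psi \in S_3(L^2\times L^2)$, I substitute $\psi = -\mathcal{G}_0 v^* \phi$ and use $CC^* = 2mA$ together with $\phi = S_3 \phi$ to compute $C\psi = -S_3 v \mathcal{G}_0^2 v^* S_3 \phi = -2mA\phi$, whence $P_m \psi = \tfrac{1}{2m} C^* A^{-1}(-2mA\phi) = -\mathcal{G}_0 v^* \phi = \psi$. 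Together with Steps 1--2 this identifies $P_m$ with the orthogonal projection onto $E_m$; finite rank is immediate since $S_3 \leq S_1$ is finite-rank by Remark~\ref{rmk:resonances}(ii).

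The main obstacle is the identity $CC^* = 2mA$: one must promote the scalar quadratic-form relation in Lemma~\ref{G2 invert} to a genuine operator identity on $S_3(L^2\times L^2)$ via polarization and the self-adjointness of $\mathcal{G}_0$. Once this is in hand, idempotency and the range computation are essentially bookkeeping on top of the $S_3 \leftrightarrow E_m$ correspondence already established in Section~\ref{sec:resonanceclass}.
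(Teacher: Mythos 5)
Your argument is correct in substance, but it takes a genuinely different route from the paper. The paper fixes an orthonormal basis $\{\phi_j\}_{j=1}^N$ of $S_3(L^2\times L^2)$, rewrites $S_3v^*\mathcal G_0 f$ as $-\sum_j\la f,\psi_j\ra\phi_j$ with $\psi_j=-\mathcal G_0 v^*\phi_j$, and then establishes $\la\psi_{i_0},P_m\psi_{j_0}\ra=\la\psi_{i_0},\psi_{j_0}\ra$ by a concrete matrix computation with $A_{ij}=\frac{1}{2m}\la\psi_j,\psi_i\ra$ and $B=A^{-1}$, i.e.\ a Gram-matrix / pseudoinverse argument in coordinates. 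You instead promote the quadratic-form identity \eqref{G0 to G2 ident} to the operator identity $S_3v\mathcal G_0^2v^*S_3 = 2m\,S_3v\mathcal G_2 v^*S_3$ by polarization (legitimate since $\mathcal G_0$ and $\mathcal G_2$ are self-adjoint, so both quadratic forms are Hermitian), and then read off idempotency and the range from the clean factorization $P_m=\frac1{2m}C^*A^{-1}C$. Your route is more conceptual and avoids index bookkeeping; the paper's is more explicit but arguably less transparent about \emph{why} the normalization $1/(2m)$ makes $P_m$ a projection.

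One technical flag you should tidy up. You declare $C$ to be ``the natural right factor of $P_m$,'' which, for the $P_m$ written in the lemma, would be $C=S_3v^*\mathcal G_0$ and hence $CC^*=S_3v^*\mathcal G_0^2vS_3$ --- \emph{not} the quantity $S_3v\mathcal G_0^2v^*S_3$ that your polarization produces, and not equal to it unless $v=v^*$. In fact every concrete computation you carry out (in particular $C\psi=-S_3v\mathcal G_0^2v^*S_3\phi=-2mA\phi$) is consistent with $C=S_3v\mathcal G_0$, $C^*=\mathcal G_0v^*S_3$, i.e.\ with $P_m=\frac1{2m}\mathcal G_0v^*S_3\,[S_3v\mathcal G_2v^*S_3]^{-1}\,S_3v\mathcal G_0$, which has $v$ and $v^*$ swapped relative to the displayed formula in the lemma. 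This appears to be a typographical slip in the paper: the paper's own step \eqref{S2 sum} asserts $S_3v^*\mathcal G_0 f=\sum_j\la f,\mathcal G_0v^*\phi_j\ra\phi_j$, whereas taking the written factor $S_3v^*\mathcal G_0$ at face value gives $\sum_j\la f,\mathcal G_0v\phi_j\ra\phi_j$; the displayed statement of the lemma and the identity used in its proof agree only after swapping $v\leftrightarrow v^*$ in one of them. So your polarization argument is correct for the (evidently intended) $P_m=\frac1{2m}\mathcal G_0v^*S_3 A^{-1} S_3v\mathcal G_0$; you should just state $C=S_3v\mathcal G_0$ explicitly rather than calling it the right factor of the displayed $P_m$, and perhaps note the $v\leftrightarrow v^*$ discrepancy.
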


\begin{proof}
Let $\{\phi_j\}_{j=1}^N$ be an orthonormal basis
for the $S_3L^2$, the range of $S_3$.  By the characterization in Lemma~\ref{lem: S1} and Remark~\ref{rmk:resonances}, the eigenspace is finite dimensional.   Then, by the lemmas above, we have
\begin{align}\label{psi j eqn}
	\phi_j=Uv\psi_j,\,\,\,\,\psi_j=-\mG_0v^*\phi_j, \qquad 1\leq j\leq N,
\end{align}
where $\psi_j\in L^2\times L^2$, $j=1,2,\ldots, N$, are eigenvectors.
Since  $\{\phi_j\}_{j=1}^N$ is linearly independent, we have that $\{\psi_j\}_{j=1}^N$ is  linearly independent, and hence it is a basis for $m$ energy eigenspace.
Using the orthonormal basis for $S_3 L^2$, we have that for any $f\in L^2\times L^2$,
$S_3f=\sum_{j=1}^N \la f,\phi_j\ra \phi_j$. Therefore, we have
\begin{align}\label{S2 sum}
	S_3v^* \mG_0f=\sum_{j=1}^N \la f,\mG_0v^*\phi_j\ra \phi_j=-\sum_{j=1}^N \la f,\psi_j\ra \phi_j.
\end{align}
This implies that the range of $P_m$ is contained in the span of $\{\psi_j\}_{j=1}^N$, since $P_m$ is self-adjoint.

We claim that, for each $i_0,j_0\in\{1,2,\ldots,N\},$
$$
\big\la \psi_{i_0}, P_m \psi_{j_0}\big\ra =\big\la \psi_{i_0}, \psi_{j_0}\big\ra.
$$
This implies that the range of $P_m$ is
equal to the span of $\{\psi_j\}_{j=1}^N$ and that $P_m$ is the identity operator on the range of $P_m$.
Since $P_m$ is self-adjoint, the assertion of the lemma holds.

Let $\mathcal A:=S_3v\mG_2v^*S_3$. Let $A=\{A_{ij}\}_{i,j=1}^N$, $B=\{B_{ij}\}_{i,j=1}^N$ be the matrix representations of $\mathcal A$ and $\mathcal A^{-1}$ with respect to the
orthonormal basis $\{\phi_j\}_{j=1}^N$ of $S_3$.
Using \eqref{G0 to G2 ident} and polarization,
\begin{align*}
	A_{ij}&=\la \phi_j,S_3v\mG_2v^*S_3\phi_i\ra=\frac{1}{2m} \la \mG_0v^*\phi_j,\mG_0v^*\phi_i\ra  = \frac{1}{2m} \la \psi_j,\psi_i\ra,\\
	B_{ij}&= A^{-1}_{ij}=\la \phi_j,\mathcal A^{-1} \phi_i\ra.
\end{align*}
Using this and \eqref{S2 sum},  we have
\begin{multline*}
\big\la \psi_{i_0}, P_m \psi_{j_0}\big\ra= \frac1{2m}\Big\la S_3v^* \mG_0  \psi_{i_0}, \mathcal A^{-1} S_3v^* \mG_0 \psi_{j_0} \Big\ra \\ =\frac1{2m}
\Big\la \sum_{i=1}^N \la \psi_{i_0},\psi_i\ra \phi_i, \mathcal A^{-1} \sum_{j=1}^N \la \psi_{j_0},\psi_j\ra \phi_j \Big\ra 
=\frac1{2m}\sum_{i,j=1}^N \la \psi_{i_0},\psi_i\ra \la  \psi_j, \psi_{j_0} \ra \big\la \phi_i, \mathcal A^{-1}   \phi_j \big\ra\\ = 2m \sum_{i,j=1}^N  A_{i,i_0} B_{j,i} A_{j_0,j} = 2m A_{j_0,i_0}=   \la \psi_{i_0},\psi_{j_0}\ra.
\end{multline*}
This finishes the proof of the claim and the lemma. 
\end{proof}

\section*{Acknowledgements} 
	The authors would like to thank Nabile Boussaid and the anonymous referee for many helpful suggestions.  In particular, suggesting relevant work on the spectrum of Dirac operators which strengthened our results.

\end{document}